\documentclass[preprint,12pt]{elsarticle}

\usepackage{graphicx} 
\usepackage[T1]{fontenc}
\usepackage{comment}
\usepackage{amsmath,amssymb,latexsym,amsfonts,amsthm,comment, xspace, mathrsfs,enumerate,accents}
\usepackage[english]{babel}
\usepackage{algorithmic, algorithm}
\usepackage[normalem]{ulem} 
\usepackage{amsbsy}
\usepackage[latin1]{inputenc}
\usepackage{graphicx}
\usepackage{mathtools}
\usepackage{tikz}
\usepackage{tikz-cd}

\newcommand{\rdp}{\mathbb{R}^d_{+}}
\newcommand{\modu}[1]{\ensuremath{\accentset{\sim}{#1}}}
\newcommand{\ds}{\displaystyle}
 \newcommand{\RR}{\mathbb R}
 \newcommand{\NN}{\mathbb N}
 \newcommand{\CC}{\mathbb C}
 \newcommand{\ZZ}{\mathbb Z}

 \newcommand{\SSS}{\ensuremath{{\cal S}}}

\newtheorem{thm}{Theorem}[section]
 \newtheorem{defn}[thm]{Definition}
 \newtheorem{lem}[thm]{Lemma}
 \newtheorem{prop}[thm]{Proposition}
 \newtheorem{cor}[thm]{Corollary}
 \newtheorem{rem}[thm]{Remark}

\begin{document}

\begin{frontmatter}

\title{Continuity properties of the Laguerre operator, its propagator and related fractional transforms\tnoteref{t1}}
\tnotetext[t1]{2020 Mathematics Subject Classification:
Primary 46F05, 46F12, 44A15; Secondary 35Q41}
\author[1]{Smiljana Jak\v{s}i\'{c}}
\ead{smiljana.jaksic@sfb.bg.ac.rs}
\author[2]{Nenad Teofanov\corref{cor1}}
\ead{tnenad@dmi.uns.ac.rs}
\author[3]{\DJ or\dj e Vu\v{c}kovi\'{c}}
\ead{djordjeplusja@gmail.com}
\cortext[cor1]{Corresponding author}
\affiliation[1]{
  organization={Faculty of Forestry, University of Belgrade},
  addressline={Kneza Vi\v seslava 1},
  city={Belgrade},
  postcode={11000},
  country={Serbia}}

\affiliation[2]{
  organization={Faculty of Sciences, University of Novi Sad},
addressline={Trg Dositeja Obradovi\'{c}a 4},
  city={Novi Sad},
  postcode={21000},
  country={Serbia}
}

\affiliation[3]{
  organization={Technical Faculty ``Mihajlo Pupin'',  University of Novi Sad},
addressline={ \DJ ure \DJ akovi\'{c}a bb},
  city={Zrenjanin},
 postcode={23000},
  country={Serbia}
}

\begin{abstract} We study the well-posedness of a Cauchy problem associated with the general form of the Laguerre operator and relate it to the corresponding global problem for the harmonic oscillator. To this end, we carry out a detailed analysis of the continuity properties of the associated propagator.
  Furthermore, we establish connections between several integral transforms, including the fractional Fourier transform and the fractional Hankel transform. Our results highlight the role of Pilipovi\'c spaces on positive orthants when studying problems involving the Laguerre operator.
\end{abstract}

\begin{keyword}
Pilipovi\'c spaces \sep
Ultradistributions\sep
Fractional integral transforms\sep
Hermite and Laguerre expansions\sep 
Laguerre operator \sep 
Propagators
\end{keyword}

\end{frontmatter}



\section{Introduction}

The continuity properties of general forms of harmonic oscillator propagators,
are recently studied by Toft, Bhimani, and Manna in  \cite{TBM}. 
They showed that certain classes of time--dependent equations are ill--posed in the framework of classical Gelfand--Shilov spaces, but well--posed in the framework of
suitable global  Pilipovi\'c spaces (i.e. spaces on $\RR^d$).
In this work, we study the Cauchy problem on the positive orthants for the Schr\"odinger type equation involving the
general form of the Laguerre operator. 
We also reveal the connection 
between radial elements from global  Pilipovi\'c spaces and suitable spaces on positive orthants to show that our results 
coincide with the ones from  \cite{TBM} 
when considering the radial solutions.

In addition, we establish connections between several integral transforms, 
including the fractional Fourier transform and the fractional Hankel transform, see  Table \ref{Table_2}  
in Section \ref{sec6}. 
For example, let $E$ denote 
the Laguerre operator:
\begin{equation}
\label{eq:LaguerreOperator}
E=-\sum_{j=1}^d\Big(  \frac{\partial}{\partial x_j}(x_j \frac{\partial}{\partial x_j})-\frac{x_j}{4}+\frac{1}{2} \Big)=-\sum_{j=1}^d \Big(x_j\frac{\partial ^2}{\partial x_j^2}+\frac{\partial}{\partial x_j}-\frac{x_j}{4}+\frac{1}{2}\Big), 
\end{equation} 
$ x_j \in \RR_+$, $ j = 1,\dots, d $, and let  $t\in\RR\setminus\{0\}$. Then we prove that 
$$
  e^{it E}f=\mathcal{I}_{ \mathbf{e^{it}},0}f, \qquad f\in\SSS(\RR^d _{+}),
$$
where $\mathbf{e^{it}}= (e^{it},\dots,e^{it}) \in \CC^d$, and $\mathcal{I}_{\mathbf{e^{it}},0}$  
denotes the fractional power of the Hankel-Clifford transform, see \eqref{eq:frac-pow-H-C}.   
This connection leads to the continuity of the fractional power of the Hankel-Clifford transform, Corollary \ref{cor:hankel}, and its extension to spaces of ultradistibutions, Remark \ref{rem:Hankel-ultradistributions}.
In contrast to \cite{TBM} where the Bargmann transform is used
to prove the relation between the harmonic oscillator  propagator and the fractional Fourier transform, 
here we use a different approach based on kernel representations and suitable change of variables.

To give a flavor of our results, we  proceed
with a particular conclusion included in Theorem \ref{cont. E}.
As a preparation, we introduce necessary notation, 
and refer to Section \ref{sec:Preliminaries} for details.
Let $E$ be given by \eqref{eq:LaguerreOperator} 
and let $\mathcal G_{\alpha}(\RR^d_+)$, $ \alpha >0$, 
be the set of  all $f \in C^{\infty} (\RR^d_+)$ such that
$$
\sup_{x\in\RR^d_+} | E^N f (x)|  \leq C h^{N} N!^\alpha, \qquad N \in \NN,
$$
for some $C, h > 0$. By $\mathcal G' _{\alpha}(\RR^d_+)$,  $ \alpha >0$,  we denote its dual space of ultradistributions, and $\mathcal G_{0}'(\RR^d_+)$ denotes the set of all formal Laguerre series expansions.
Finally, we let the propagator $e^{zE}$, $ z\in \CC$, 
be given by the formal series expansion
\begin{equation}
\label{eE-intro}
e^{zE}f=\sum_{n\in\NN^d_0} e^{z|n|} a_n(f)  l_n,
\qquad f \in \mathcal G_{0}'(\RR^d_+),
\end{equation}
where $l_n $, $n\in\NN^d_0$, denote the Laguerre functions, see \eqref{eq:laguerre}
 and $a_n(f)$, $n\in\mathbb N^d _0$ are the coefficients in the Laguerre series expansion of $f$, see Subsections \ref{subsec:formal}   and \ref{lager-hermit}.
Then we have the following.

\begin{thm}\label{thm:intro}

\begin{itemize}
\item[(i)] Let there be given $ z, c \in\CC $, $z \neq 0$, and $ 0<\alpha < 1$.
The mapping $e^{zE}$ given by (\ref{eE-intro})  restricts to topological isomorphisms on  
$ \mathcal G_{\alpha}(\RR^d_+)$ and $ \mathcal G'_{\alpha}(\RR^d_+).$
\item[(ii)]
If, in addition, $\operatorname{Re} z = 0$, then the mapping (\ref{eE-intro}) 
is  a topological isomorphism on
$ \mathcal G_{\alpha}(\RR^d_+)$ and $ \mathcal G'_{\alpha}(\RR^d_+)$,
for all $\alpha >0$.
 \item[(iii)] If $\alpha>1$ and $\operatorname{Re}(z)<0$ then the mapping (\ref{eE-intro})  is continuous injection but not surjection on 
 $\mathcal G_{\alpha}(\RR^d_+)$ and $\mathcal G'_{\alpha}(\RR^d_+).$
 \item[$(iv)$] If $\alpha>1$ and $\operatorname{Re}(z)>0$ then the mapping (\ref{eE-intro})  is not continuous  on $\mathcal G_{\alpha}(\RR^d_+)$.
 \end{itemize}
 \end{thm}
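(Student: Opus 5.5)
The plan is to move everything to sequence spaces of Laguerre coefficients. From the preliminaries I take the standard characterization of Pilipovi\'c spaces on the orthant via coefficient decay. For $\alpha\in(0,1)$, and also for $\alpha\ge 1$ when working with the $E^N$-bound definition, $f\in\mathcal G_\alpha(\RR^d_+)$ if and only if
$$|a_n(f)|\lesssim e^{-r|n|^{1/(2\alpha)}}$$
for some $r>0$. Dually, $f\in\mathcal G'_\alpha(\RR^d_+)$ if and only if
$$|a_n(f)|\lesssim e^{r|n|^{1/(2\alpha)}}$$
for every $r>0$. The topologies are the corresponding inductive and projective limits of weighted $\ell^\infty$ (or $\ell^2$) sequence spaces. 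These characterizations follow from $El_n=|n|l_n$, so that $a_n(E^Nf)=|n|^Na_n(f)$, together with the Laguerre analogue of the Hermite results (transferred, e.g., via the radial correspondence mentioned in the introduction). Under this isomorphism, $e^{zE}$ is simply the diagonal multiplier $a_n\mapsto e^{z|n|}a_n$, whose inverse is $e^{-zE}$. All four claims then reduce to comparing the growth of $|e^{z|n|}|=e^{\operatorname{Re}z\,|n|}$ with the weights $e^{\pm r|n|^{1/(2\alpha)}}$.

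\emph{Parts (i) and (ii).} For $0<\alpha<1$ the exponent satisfies $1/(2\alpha)>1/2$. Here I should be careful: the threshold where $|n|^{1/(2\alpha)}$ dominates $|n|$ is $\alpha<1/2$, not $\alpha<1$. So either the paper's normalization has $\mathcal G_\alpha$ correspond to decay $e^{-r|n|^{1/\alpha}}$, or the statement involves a different exponent convention; I will fix the convention from the preliminaries. The statement's thresholds at $\alpha=1$ indicate that the weight is $e^{-r|n|^{1/\alpha}}$. With that weight, for $\alpha<1$ we have
$$e^{|\operatorname{Re} z||n|}e^{-r|n|^{1/\alpha}}\le C_\varepsilon e^{-(r-\varepsilon)|n|^{1/\alpha}}$$
for every $\varepsilon>0$. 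Hence the multiplier maps each step space continuously into a slightly larger step space, and is therefore continuous on the inductive limit. The same computation for $e^{-zE}$ gives continuity of the inverse, so $e^{zE}$ is a topological isomorphism. The dual space is handled identically, since there $r$ ranges over all positive values. For (ii), $|e^{z|n|}|=1$, so the multiplier is an isometry on every weighted space and its inverse is $e^{-zE}$, for every $\alpha>0$.

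\emph{Part (iii).} For $\alpha>1$ we have $1/\alpha<1$, and $e^{\operatorname{Re}z|n|}\le 1$, so continuity holds trivially. Injectivity is clear because $e^{z|n|}\neq0$. For non-surjectivity, take $a_n=e^{-r|n|^{1/\alpha}}$, which defines an element of the space; in the dual case take $a_n=1$. The only possible preimage has coefficients $e^{-\operatorname{Re}z|n|}a_n$. These grow like $e^{c|n|-r|n|^{1/\alpha}}$, so they lie neither in $\mathcal G_\alpha$ nor in $\mathcal G'_\alpha$, because $|n|$ beats every $r|n|^{1/\alpha}$.

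\emph{Part (iv).} For $\alpha>1$ and $\operatorname{Re}z>0$, the image of $f$ with $a_n(f)=e^{-|n|^{1/\alpha}}$ has coefficients that grow exponentially, so it does not lie in $\mathcal G_\alpha$. Thus the map is not even well defined into the space, and in particular not continuous. If instead one wants discontinuity as a map into $\mathcal G_0'$ restricted to the space, use the truncated test functions $l_n$: the ratio of the seminorms of $e^{zE}l_n$ and $l_n$ is at least $e^{\operatorname{Re}z|n|}$, which is unbounded against any fixed seminorm constant.

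The main obstacle is the first step: establishing the sequence-space characterization with the precise exponent, together with its topology, for the $E^N$-defined spaces on $\RR^d_+$, and handling the dual pairing. Everything else is an elementary weight comparison.
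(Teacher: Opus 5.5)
Your proposal is correct and is essentially the paper's argument. Via the coefficient characterization of Theorem~\ref{glavnabeurling}, which the paper imports from earlier work rather than proving (so your ``main obstacle'' is taken as known there), $e^{zE}$ becomes the diagonal multiplier $a_n\mapsto e^{z|n|}a_n$, and (i)--(iv) reduce to comparing $e^{\operatorname{Re}(z)|n|}$ with the weights $e^{\mp h|n|^{1/\alpha}}$ and testing on sequences like $e^{-|n|^{1/\alpha}}$, exactly as in Theorem~\ref{cont. E}, the remark after it, and Theorem~\ref{discon E}.

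Two small remarks. First, the exponent $1/\alpha$ you settled on is the right one: $\mathcal G_\alpha(\RR^d_+)$ corresponds to $\ell_{\alpha/2}(\NN^d_0)$, and this also follows directly from $a_n(E^Nf)=|n|^Na_n(f)$ and the $N!^{\alpha}$ bound. Second, your optional closing remark in (iv) is misphrased, and it is not needed, since non-membership of $e^{zE}f$ in $\mathcal G_\alpha(\RR^d_+)$ already settles (iv). As a map into $\mathcal G_0'(\RR^d_+)$ with its coordinatewise topology, $e^{zE}$ is in fact continuous; what the $l_n$ computation actually shows is discontinuity of its restriction to finite Laguerre sums in the $\mathcal G_\alpha$ topology.
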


This result lies in the background of our investigations of well-posedness of the Cauchy problem related to a Schr\"odinger type equations involving the general Laguerre operator of the form $ E_{\rho,c}=\rho E+c=\rho E_{c/\rho},$
$\rho\in\CC\setminus\{0\}$, $c\in \CC$. It turns out that when $ \operatorname{Im}(\rho)>0$, the initial conditions in $ \mathcal G_{\alpha}(\RR^d_+)$ provide well-posedness if and only if $ 0<\alpha < 1$. 

By using the connection between $ \mathcal G_{\alpha}(\RR^d_+)$ and
radially symmetric elements from global Pilipovi\'c spaces (see Subsection \ref{subsec:radially-symmetric}) we show that results  are in accordance with the findings from  \cite{TBM}. We note that 
$ E_{\rho,c} $ can be related to the  two-dimensional harmonic oscillator, see \eqref{fg}.

Moreover, to discuss more general initial conditions, we introduce suitable radial ultradistributions and provide an interpretation of the change of variables in the context of ultradistributions. These results are of independent interest. 

Finally, we establish formulas which show that the Laguerre operator propagator is essentially the same as the fractional Fourier transform, the fractional Hankel transform, and the fractional power of the Hankel-Clifford transform when considering suitable domains.

The  contents of the paper can be briefly summarized as follows. 

We collect the necessary notions related to test function spaces and their distribution spaces in Section \ref{sec:Preliminaries}. Subsection \ref{subsec:radially-symmetric} contains new results concerning radially symmetric spaces, which will be essential for establishing the connection with Schr\"odinger equations associated with the harmonic oscillator. 

Section \ref{sec3} contains a detailed study of the propagator of the Laguerre operator and its continuity properties. In addition to establishing the connection between the propagator and the fractional power of the Hankel--Clifford transform, we prove a more general version of Theorem \ref{thm:intro} and several related results that highlight the difference between $ \mathcal G_{\alpha}(\RR^d_+)$ that are not $G$-type spaces in the sense of 
\cite{Duran92, SSB}, and nontrivial $G$-type spaces.

In  Section \ref{Sec general equations}, we consider the Cauchy problem involving the operator
$E_{\rho,c},  $ $ \rho, c \in\CC $, $\rho \neq 0$,
and discuss continuity results for the corresponding propagators in
Theorems \ref{thm:propagator_T} and \ref{thm:propagators_T and W}. 

In Section \ref{sec5}, we establish the relation between a Cauchy problem involving the Laguerre operator and Schr\"odinger type equations 
considered in \cite{TBM}. More precisely, we provide an explicit relationship between the solutions of the two problems when restricted to the even and radial subspaces of Pilipovi\'c spaces; see Propositions \ref{prop:Toft-i-mi} and 
\ref{rot-inv-novo}. To that end, we introduce and study the auxiliary operator $J_{\rho,c}$, $\rho,c\in\CC$, $\rho\neq 0$, see \eqref{operatorJrc}.
A challenging question concerning the interpretation of these results when the initial values are ultradistributions is addressed in Subsection \ref{subsec:5.2}. The results of Propositions \ref{prop:dist-in-value-1} and \ref{prop:dist-in-value-2} are obtained through careful manipulations involving transposes and inverses of operators, mostly related to the radial symmetry properties. 

In the last section we present formulas that emerge from our investigations and that reveal connections between the propagator of the Laguerre operator and several fractional transforms. These links open up further possibilities for transferring various properties between the operators.

We conclude the paper with Appendix \ref{App-A} containing related results of independent interest.
In particular, we show that the Laguerre operator propagator 
can be expressed as a series when acting on $ \mathcal G_{\alpha}(\RR^d_+)$, $ \alpha \in (0,1)$,
 whereas the same conclusion does not hold for $G$-type spaces.
Finally, we prove a topological isomorphism between suitable spaces of radially symmetric ultradistributions, 
Proposition \ref{dualradial}.

\section{Preliminaries} \label{sec:Preliminaries}

In this section we provide the necessary background for our investigations.
In particular, we introduce the Laguerre operator, spaces of sequences, test functions, and their associated distribution spaces. Subsection \ref{subsec:radially-symmetric} contains several new results on radially symmetric functions and distributions, which will be used in the sequel. 
\subsection{Notation}

The sets of positive
integers, integers, real and complex numbers will be denoted by $\NN $, $\ZZ $, $\RR $ and $\CC$  respectively. Moreover,
$\NN_0=\NN \cup\{0\}$, $\RR_+=(0,\infty)$,  $\overline{\RR_+}=[0,\infty)$.
The upper index $d \in \NN$ will usually denote the dimension, therefore
$\RR^d_+=(0,\infty)^d$, etc.

The standard multi-index notation is used:  for $\alpha\in\NN_0^d$ 
and $x\in\RR^d$ (or $x\in\overline{\RR^d_+}$), $x^\alpha=x_1^{\alpha_1}...x_d^{\alpha_d}$, $0^0=1$,
$|x| = \sqrt{x_1^{2} + \dots + x_d^{2}}$, $x\in\RR^d$,
and $\partial ^\alpha=\partial^{\alpha_1}/{\partial x_1^{\alpha_1}}...\partial^{\alpha_d}/\partial x_d^{\alpha_d}$.
The length of $\alpha \in \NN_0^d$ is given by $\ds |\alpha| = \sum_{i=1}^d \alpha_j$.

The matrix $A=[a_{i,j}]_{1\leq i,j\leq d}$
with real entries 
is orthogonal if it is regular and additionally,
$A^T \cdot A = E_{d\times d}$, where 
$A^T = [a_{j,i}]_{1\leq i,j\leq d}$, and
$E_{d\times d}$ denotes the identity matrix of  order $d$. The space of all orthogonal matrices is denoted by $O(d)$. Since  every $x\in\RR^d$ could be represented as a column-matrix, we have $\sum_{j=1}^d x_j^2=x^T\cdot x$ and therefore, for every $A\in O(d)$ and $y=Ax$ we infer 
\begin{equation}\label{ortogonalnost}
\sum_{j=1}^d y_j^2=(Ax)^T\cdot (Ax)=x^T(A^T\cdot A)x=x^Tx=\sum_{j=1}^d x_j^2.
\end{equation}
In other words, orthogonality preserves the $2-$norm on $\RR^d$. 
%

Recall, a function $f: \RR^d \to \RR$ is {\em even } if
\begin{equation} 
\label{eq:even}
f (x_1, \dots, x_d) = f(\varepsilon_1 x_1, \dots, \varepsilon_d x_d), \qquad
(x_1, \dots, x_d) \in \RR ^d, 
\end{equation}
where  $ \varepsilon_j \in \{ -1, 1\}$, $ j = 1,\dots, d$. 

\par

Next, we introduce notation for the mappings associated with the following changes of variables:
$$
\rho:\RR^d\to \overline{\RR^d_{+}}, \qquad
\rho (x_1,x_2,\dots x_d)=(x_1^2,x_2^2\dots, x_d^2),
$$
$$
\varrho:\overline{\RR^d_{+}}\mapsto \overline{\RR^d_{+}},
\qquad 
\varrho(y_1,y_2,\dots y_d)=(\sqrt{y_1},\sqrt{y_2},\dots,\sqrt{y_d}),
$$
and
\begin{equation}\label{J-intro}
\mathcal{J}(f)= f|_{\overline{\RR^d_{+}}}\circ \varrho,\quad \mathcal J^{-1}(g)= g\circ \rho,
\end{equation}
where $f$ is an even function on $\RR^d$, and
$g$ is an arbitrary function on $\RR^d$.

Let now $f: \RR \to \RR$. By $ \Phi$ we denote the mapping 
$\Phi : f \mapsto \modu{f}$,
given by 
\begin{equation}\label{eq:Phi}
(\Phi  f )(x) =
\modu{f} (x) = f (|x|  ) = f(\sqrt{x_1 ^2 + \dots + x_d ^2}), 
\quad x = (x_1, \dots, x_d) \in \RR^d,
\end{equation}
and 
$\mathcal{J}'$ and $\Phi '$ are the transpose mappings of
 $\mathcal{J}$ and $\Phi $, respectively.

We write $ a_n \preceq  b_n, $ $ n\in\NN$, when 
$\ds \lim_{n\to\infty}a_n=\lim_{n\to\infty} b_n=\infty$ and $\ds \lim_{n\to\infty}\frac{a_n}{b_n}=0$.

Standard notation for spaces of functions, distributions, and sequences will be used throughout the paper.
For example, $ \ell^p $, $ p\geq 1$, denotes the Banach space of { $p-$summable} sequences, endowed with the  usual norm:
$$
\{a_n \}_{n \in \NN ^d _0} \in \ell^p \qquad \Longleftrightarrow \qquad \| a_n \|_{\ell^p}  =
( \sum_{n \in \NN ^d _0} |a_n|^p )^{1/p} <\infty,
$$
and $\displaystyle \{a_n \}_{n \in \NN ^d _0} \in \ell^\infty $ if and only if $
\|a_n \|_{ \ell^\infty } = \sup_{n \in \NN ^d _0} |a_n | < \infty$.

A  measurable function {
$f(x)$, $x \in \RR^d_{+}$, 
} belongs to the Hilbert space  $L^2(\RR^d_+)$ if
$$ \|f\|_{L^2} :=  \ds (\int_{\RR^d_+} |f(x)|^2 dx)^{1/2}<\infty,
$$ and the scalar product is given in the usual way, i.e. 
$ \ds
( f ,g ) = $ $ \int_{\RR^d_+} f(x) \overline{g (x)} dx, $ $ f,g \in L^2(\RR^d_+).$

By $\SSS(\mathbb{R}_+^d)$ we denote the space of all smooth functions, $f\in C^{\infty}(\RR^d_+)$ such that all of its derivatives $\partial^\alpha f$, $\alpha \in\NN^d_0$, extend to continuous functions on
$\overline{\RR^d_+}$, and
$$
\mu_{\alpha, \beta} \; := \sup_{x\in\mathbb{R}^d_+}x^\beta |\partial^\alpha f(x)|<\infty,
\qquad \forall \alpha, \beta \in\mathbb{N}_0^d.
$$
The  system of seminorms $\mu_{\alpha, \beta}$, 
$\alpha, \beta \in\mathbb{N}_0^d $, endows $\mathcal S(\mathbb{R}_+^d)$ with the structure of  Frech\'et  space, see e.g. \cite{Sm}. 
Its strong dual, the space of tempered distributions on orthant, 
is denoted by  $\mathcal{S}'(\mathbb{R}^d_+)  .$
\par

Let $I$ be a directed set and let  $\{X_{\alpha}\}_{\alpha\in I}$ be a family of normed spaces.
 By $ \bigcup_{\alpha\in I} X_{\alpha}$ we denote
 the union of $\{X_{\alpha}\}_{\alpha\in I}$, endowed with  the inductive limit topology, and by
$ \bigcap_{\alpha\in I} X_{\alpha}$ we denote
 the  intersection of $\{X_{\alpha}\}_{\alpha\in I}$, endowed with   
 the projective limit topology.

A Fr\'{e}chet space $X=\bigcap_{n\to\infty} X_n$, where $(X_n)_{n\in\mathbb{N}}$ are Banach spaces, is called a Fr\'{e}chet--Schwartz space ($(FS)$-space) if the inclusions
\[
u_{n}^{\,n+1} : X_{n+1} \to X_n, \qquad n\in\mathbb{N},
\]
are compact operators. On the other hand, if we have  $(X_n)_{n\in\mathbb{N}}$  Banach spaces and compact inclusions
\[
u_{n+1}^{\,n} : X_{n} \to X_{n+1}, \qquad n\in\mathbb{N},
\]
then the space $X=\bigcup_{n\in\NN} X_n$ is called dual Fr\'{e}chet--Schwartz space
($(DFS)$-space).

If $\mathcal{A}$ is a space of test functions, then $\mathcal{A}'$ denotes its dual space of (ultra)distributions, and $_{\mathcal{A}'} \langle\cdot, \cdot \rangle_{\mathcal{A}} =  \langle\cdot, \cdot \rangle$
denotes the dual pairing.

\subsection{Sequence spaces}

For any given  $N\in\ZZ$, 
by $s_N (\NN_0^d)$ 
we denote the space  of all complex sequences $\{a_n\}_{n\in\NN_0^d}$ such that 
\begin{equation}
\label{eq:sequence-s_N}
\|\{a_n \}_{n \in \NN ^d _0}\|_{s_N}:= \|a_n\cdot \langle n\rangle^N\|_{l^{\infty}} <\infty.
\end{equation}
Equipped with this norm $s_N(\NN_0^d)$ becomes a Banach space.

The space  of rapidly decreasing sequences 
 $s(\NN_0^d)$ and 
and its dual space of slowly increasing sequences $s'(\NN^d_0)$
are then defined as follows:
\begin{equation}
\label{eq:sequences-s-and-s'}
s(\NN^d_0)=\bigcap_{N\in\NN_0} s_N(\NN_0^d), \qquad s'(\NN^d_0)=\bigcup_{N\in\NN_0} s_{-N}(\NN_0^d).
\end{equation}
These spaces are $(FN)-$ and $(DFN)-$ spaces, respectively (see \cite[p. 527]{Tr}). 
Every sequence  $\{a_n\}_{n\in\NN_0^d}\in s'(\NN_0^d)$ is identified with the continuous linear functional on $s_N(\NN_0^d)$ in the following sense: it acts on an element $\{b_n\}_{n\in\NN_0^d}\in s_N(\NN_0^d)$ via the mapping 
\begin{equation*}\label{dualniz}
\langle \{a_n\}_{n\in\NN_0^d}, \{b_n\}_{n\in\NN_0^d}\rangle 
:=\sum_{n\in\NN_0^d} a_n\cdot b_n. 
\end{equation*}
It can be shown that, with respect to this identification, $s'(\NN_0^d)$ is the strong dual of $s(\RR^d).$
Moreover,  the spaces $s(\NN_0^d)$ and $s'(\NN_0^d)$ defined in 
\eqref{eq:sequences-s-and-s'}, remain unchanged if  $\ell^{\infty}$-norm in \eqref{eq:sequence-s_N}  is replaced by an arbitrary $\ell^p$-norm, $ p\geq 1$. 

\par 

Let $\alpha>0 $,  $r>0$, and put $\vartheta_{r,\alpha}(n)= e^{r|n|^{1/(2\alpha)}}, $  $ n \in \NN^d_0$.  By $\ell^{p}_{[\vartheta_{r,\alpha}]}(\NN_0^d)$, $p \in [1,\infty]$,
we denote the space of all complex sequences $\{a_n\}_{n\in\NN^d_0}$
such that 
\begin{equation} \label{eq:norm-sequence}
\|\{a_n\}_{n\in\mathbb \NN^d_0 }\|_{\ell^{p}_{[\vartheta_{r,\alpha}]}}:=\|\{|a_n|\cdot \vartheta_{r,\alpha}(n)\}_{n\in\NN^d_0}\|_{\ell^{p}}<\infty.
\end{equation}
Equipped with this norm $\ell^{p}_{[\vartheta_{r,\alpha}]}(\NN_0^d)$ becomes a Banach space.

We  then introduce
\begin{equation}
 \label{eq:sequence_ell}
\ell_{\alpha}(\NN^d_0)=\bigcup_{r>0} \ell^{\infty}_{[\vartheta_{r,\alpha}]}(\NN_0^d), \qquad
  \ell_{0,\alpha}(\NN_0^d) =\bigcap_{r>0} \ell^{\infty}_{[\vartheta_{r,\alpha}]}(\NN_0^d),
\end{equation}
and endow these locally convex spaces with inductive and projective limit topology, respectively.  These spaces are $(DFN)$- and $(FN)$- spaces, respectively.

Their strong duals $\ell_{\alpha}'(\NN_0 ^d)$ and $\ell_{0,\alpha}'(\NN_0 ^d)$ are $(FN)-$ and $(DFN)-$spaces, respectively. Their elements can be identified as sequences so that
\begin{equation}
 \label{eq:sequence_ell_dual}
\ell_{\alpha}'(\NN_0 ^d)=\bigcap_{r>0} \ell^{\infty}_{[1/\vartheta_{r,\alpha}]}(\NN_0^d)
\quad \text{and } \quad
\ell_{0,\alpha}'(\NN_0^d) =\bigcup_{r>0} \ell^{\infty}_{[1/\vartheta_{r,\alpha}]}(\NN_0^d),
\end{equation}
both in algebraic and topological sense.  We refer to
\cite[Proposition 2.2]{SSB} and
\cite[Proposition 1.2]{SSBb} 
for the proof when $\alpha >1$, and the extension to $\alpha > 0$ is straightforward.

We note that the spaces $\ell_{\alpha}(\NN^d_0)$, $\ell_{0,\alpha}(\NN^d_0)$ and their duals remain unchanged if  $\ell^{\infty}_{[\vartheta_{r,\alpha}]}$
and $\ell^{\infty}_{[1/\vartheta_{r,\alpha}]}$ in \eqref{eq:sequence_ell}
and \eqref{eq:sequence_ell_dual}  are replaced by 
$\ell^{p}_{[\vartheta_{r,\alpha}]}$
and $\ell^{p}_{[1/\vartheta_{r,\alpha}]}$ respectively, for any $ p\geq 1.$

Finally, by $\ell_0(\NN^d_0)$ and $\ell'_0(\NN^d)$ we denote the space 
of complex sequences that vanish beyond some index, and   the space of all complex sequences, respectively.

\subsection{  Laguerre and Hermite  functions and  operators} \label{lager-hermit}

For $ n \in\mathbb{N}_0$ and $\gamma\geq 0$, the $n$-th Laguerre
polynomial of order $\gamma$ is defined by
$$
L_n ^\gamma(x)=\frac{x^{-\gamma}e^x}{n!}\frac{d^n}{dx^n}(e^{-x}x^{\gamma+n}),\quad 
x \geq 0,
$$ 
and the $n$-th Laguerre function is defined by
$l_n ^\gamma (x)=L_n ^\gamma (x)e^{-x/2}$, $x \geq 0$.
The $d$-dimensional Laguerre polynomials and Laguerre
functions are given by the corresponding tensor products:
\begin{equation}  \label{eq:laguerre}
L_n^\gamma(x)=\prod_{j=1}^d L_{n_j} ^{\gamma} (x_k) \qquad \text{ and} \qquad
l_n^\gamma(x)=\prod_{j=1}^d l_{n_j} ^{\gamma} (x_k), \quad x \in \overline{\mathbb{R}^d_+},  n\in\NN^d_0.
\end{equation}
We are interested in the case  $\gamma=0$, and write $L_n$ and $l_n$ instead of $L_n^0$ and $ l_n^0$, respectively.

\par 

The Laguerre functions $\{l_n\}_{n\in\NN^d_0}$ form an orthonormal basis in $L^2(\mathbb{R}^d_+)$ (see e.g. \cite{Wong}). Thus every $f\in L^2(\mathbb{R}^d_+)$ can be expressed  via its Laguerre series expansion,
and by $a_n(f)$
we denote the corresponding Laguerre coefficients:
\begin{equation*} \label{eq:Laguerrecoeff}
a_n(f)=\int_{\mathbb{R}^d_+}f(x)l_{n}(x)dx, \qquad n\in\mathbb{N}^d_0.
\end{equation*} 
Therefore,
$$
f=\sum_{n\in\mathbb{N}^d_0}  a_n(f)l_n, \qquad
f\in L^2(\RR^d_+),
$$
where the equality holds in $L^2-$sense. 
\par

Recall, the Laguerre operator $E$ is given by
\begin{equation*}
E=-\sum_{j=1}^d\Big(  \frac{\partial}{\partial x_j}(x_j \frac{\partial}{\partial x_j})-\frac{x_j}{4}+\frac{1}{2} \Big)=-\sum_{j=1}^d \Big(x_j\frac{\partial ^2}{\partial x_j^2}+\frac{\partial}{\partial x_j}-\frac{x_j}{4}+\frac{1}{2}\Big), 
\end{equation*} 
$ x = (x_1, \dots, x_d) \in \RR^d_+.$ The Laguerre functions are the eigenfunctions of the operator $E$:
$$
E ( l_n(x))=\Big(\sum_{i=1}^d  n_j\Big )\cdot l _n(x)=|n| \cdot l _n(x),
$$
see \cite[(11), p.188]{Ed}, and for $N\in \mathbb N$, and $n\in \NN_0^d$ we have
\begin{equation*}\label{Eigen}
E^{N} ( l_n(x))=|n|^N l_{n}(x), \qquad x\in\RR^d_+.
\end{equation*}
If $f, Ef \in L^2(\RR^d_+)$, by using the integration by parts we  get
\begin{equation*} 
\langle Ef, l_n\rangle = \langle f, El_n\rangle=|n|\langle f,l_n\rangle=|n|\cdot a_n(f), \quad n \in \NN_0^d,    
\end{equation*}
i.e. the operator $E$ is a self-adjoint operator on 
$ L^2(\RR^d_+)$.

The Hermite polynomials $H_n$ and the corresponding Hermite functions $h_n$ are defined by
\begin{equation}   \label{eq:hermite-polynomial}
H_n(x) =(-1)^n e^{x^2}\frac{d^n}{dx^n}(e^{-x^2}),
   \qquad  h_n (x)= (2^n n!\sqrt{\pi})^{-1/2} e^{-x^2/2}H_n(x),
\end{equation}
$ x\in\mathbb{R}$, $ n\in\NN_0,$
respectively. The $d$-dimensional Hermite polynomials $H_n$ and the Hermite functions $h_n$ are given by the tensor products
$$ 
H_n(x)= \prod_{j=1}^d  H_{n_j}(x_j),\quad \mbox{and} \quad
h_n(x)=  \prod_{j=1}^d  h_{n_j}(x_j),\quad
x\in\mathbb{R}^d,\; n\in\NN_0^d,
$$
respectively. 

The harmonic oscillator
(the Hermite operator) $H$ is given by:
\begin{equation*}
\label{harmonijski}
H=  -\Delta + |x|^2
=
\sum_{j=1}^d ( - \frac{\partial^2}{\partial x^2_j} +x_j^2),
\qquad
x \in\mathbb{R}^d, 
\end{equation*}
where $\Delta$ denotes the Laplacian in $\RR^d.$
Then the Hermite functions are eigenfunctions of $H$:
$$
H ( h_n(x))=(2|n|+d)h_n (x),  
$$
see \cite[p. 8]{Toft1}, and
$$
H^{N} ( h_n(x))=  (2|n|+d)^N h_{n}(x)
 \qquad
x \in\mathbb{R}^d,  n\in\NN_0^d,  N\in \mathbb N.
$$

The Hermite functions $\{h_n\}_{n\in\NN^d_0}$ form an orthonormal basis in 
$L^2(\mathbb{R}^d)$, thus every $f\in L^2(\mathbb{R}^d)$ 
can be expressed via its Hermite series expansion,
$$
f=\sum_{n\in\mathbb{N}^d_0}  c_n(f)h_n,
$$
where the equality holds in $L^2-$sense, and
\begin{equation}
\label{eq:Hermitecoeff}
 c_n(f)=\int_{\mathbb{R}^d}f(x)h_{n}(x)dx, \qquad n\in\mathbb{N}^d_0.
\end{equation}

\par

The Hermite and Laguerre polynomials are related in the following way:
$$
H_{2n} (x) = (-1)^n 2^{2n} n! L_n ^{-1/2} (x^2), \qquad
H_{2n+1} (x) = (-1)^n 2^{2n} n! L_n ^{1/2} (x^2) x, 
$$
$x\in \mathbb{R} ^d$, $ n\in\NN_0 ^d$,
see (1.1.52) and (1.1.53) in \cite{Thang}.

We end the subsection with an auxiliary result which will be used when studying the connection between radially symmetric and even functions and distributions 
in Subsection \ref{subsec:radially-symmetric}.

\begin{lem} \label{lm:lager-hermit-connection}
Let $l_n$ and $h_n$,  $n\in \NN$, be Laguerre and Hermite functions as given by
\eqref{eq:laguerre} and \eqref{eq:hermite-polynomial}, respectively. Then we have the following connection:
\begin{equation}\label{lagfor2}
l_n(x_1^2+x_2^2)=(-1)^n \sum_{k=0}^n c_{n,k} h_{2k}(x_1) h_{2n-2k}(x_2), \qquad x_1, x_2 \in 
\overline{\RR _+}, n\in \NN_0,
\end{equation}
where the coefficients $c_{n,k}$ satisfy $0\leq c_{n,k}\leq \sqrt{\pi}.$
\end{lem}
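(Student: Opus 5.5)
The plan is to combine the classical addition formula for Laguerre polynomials with the Hermite–Laguerre relation $H_{2n}(x)=(-1)^n2^{2n}n!\,L_n^{-1/2}(x^2)$ quoted just before the lemma. The constants $c_{n,k}$ will then be explicit, and the bound $0\le c_{n,k}\le\sqrt\pi$ will reduce to the elementary inequality $\binom{2k}{k}\le 4^k$.

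\emph{Step 1 (addition formula).} Since the paper defines $L_n^\gamma$ only for $\gamma\ge 0$, I will work with the generating function
\begin{equation*}
\sum_{n\ge 0}L_n^{\gamma}(x)t^n=(1-t)^{-\gamma-1}e^{-xt/(1-t)},\qquad |t|<1 .
\end{equation*}
It agrees with the Rodrigues formula for $\gamma\ge0$, and it also covers $\gamma=-1/2$, the case implicitly used in the Hermite relation. Multiplying the generating functions for $(\gamma,x_1^2)$ and $(\gamma,x_2^2)$ with $\gamma=-1/2$ gives $(1-t)^{-1}e^{-(x_1^2+x_2^2)t/(1-t)}$. This is exactly the generating function for $L_n^{0}(x_1^2+x_2^2)$. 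Comparing coefficients of $t^n$ yields
\begin{equation*}
L_n(x_1^2+x_2^2)=\sum_{k=0}^n L_k^{-1/2}(x_1^2)\,L_{n-k}^{-1/2}(x_2^2).
\end{equation*}

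\emph{Step 2 (passing to Hermite functions).} From the quoted relation, $L_k^{-1/2}(x^2)=(-1)^kH_{2k}(x)/(2^{2k}k!)$. By \eqref{eq:hermite-polynomial} we have $H_{2k}(x)e^{-x^2/2}=(2^{2k}(2k)!\sqrt\pi)^{1/2}h_{2k}(x)$, hence
\begin{equation*}
L_k^{-1/2}(x^2)e^{-x^2/2}=(-1)^k\pi^{1/4}\frac{\sqrt{(2k)!}}{2^k k!}\,h_{2k}(x).
\end{equation*}
Multiply the identity of Step 1 by $e^{-(x_1^2+x_2^2)/2}$, so that the left side becomes $l_n(x_1^2+x_2^2)$. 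The signs combine to $(-1)^k(-1)^{n-k}=(-1)^n$, and we obtain \eqref{lagfor2} with
\begin{equation*}
c_{n,k}=\sqrt\pi\,\Big(\binom{2k}{k}4^{-k}\Big)^{1/2}\Big(\binom{2n-2k}{n-k}4^{-(n-k)}\Big)^{1/2}.
\end{equation*}

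\emph{Step 3 (bounds).} Each $c_{n,k}$ is clearly nonnegative. Since $\binom{2m}{m}\le\sum_j\binom{2m}{j}=4^m$, each bracket is at most $1$, so $c_{n,k}\le\sqrt\pi$.

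The only delicate point is Step 1: it relies on interpreting $L_k^{-1/2}$ outside the range $\gamma\ge0$ used in the paper's definition. I would handle this by taking the generating function (equivalently, the explicit polynomial formula) as the definition for all $\gamma>-1$, and noting that the quoted Hermite relation is stated precisely in this sense. The rest is bookkeeping of normalizing constants.
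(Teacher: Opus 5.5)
Your proposal is correct and follows essentially the same route as the paper. Both use the addition formula $L_n(x_1^2+x_2^2)=\sum_{k=0}^n L_k^{-1/2}(x_1^2)L_{n-k}^{-1/2}(x_2^2)$ and the relation $H_{2k}(x)=(-1)^k2^{2k}k!\,L_k^{-1/2}(x^2)$, multiply by $e^{-(x_1^2+x_2^2)/2}$, and bound the constants via $\binom{2m}{m}\le 4^m$; your $c_{n,k}$ coincide with the paper's explicit constants. The only difference is that you derive the addition formula from the generating function instead of citing it from Erd\'elyi, which makes the argument self-contained. Your worry in Step 1 is also harmless, since the Rodrigues formula already defines $L_k^{-1/2}$ as a polynomial.
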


\begin{proof}
Let $n\in \NN_0$ be arbitrary but fixed. From \cite[(41) p. 192  and (2) p. 193]{Ed} we obtain
\begin{equation}\label{lager-hermit-polinomi}
L_n(x_1^2+x_2^2)=\sum_{k=0}^n L_k^{-1/2}(x_1^2) L_{n-k}^{-1/2} (x_2^2)=(-1)^n\sum_{k=0}^n \frac{H_{2k}(x_1)H_{2n-2k}(x_2)}{4^n k! (n-k)!}, 
\end{equation}
$ x_1, x_2 \in \overline{\RR _+}$.
Multiplying  \eqref{lager-hermit-polinomi} 
with $\displaystyle e^{-\frac{1}{2} (x_1^2 + x_2^2)}$ 
we obtain the following relation between  Laguerre and Hermite functions: 
\begin{eqnarray*}
 l_n (x_1^2+x_2^2) = (-1)^n \sum_{k=0}^n  \frac{H_{2k}(x_1)e^{-x_1^2/2} H_{2n-2k}(x_2) e^{-x_2^2/2}}{4^n k! (n-k)!} \\
= (-1)^n \sum_{k=0}^n   
 c_{n,k} \cdot h_{2k}(x_1)h_{2n-2k}(x_2), \qquad  x_1, x_2 \in \overline{\RR _+},
\end{eqnarray*}
where
$$
 c_{n,k} = \frac{\sqrt{ \pi   2^{2k} \cdot 2^{2n-2k} (2k)! (2n-2k)!  } }{4^n k! (n- k)!},
 \qquad 0\leq k \leq n.
$$
Thus $c_{n,k}\geq 0$, and the upper bound for $c_{n,k}$ 
can be obtain in the following way:
$$c_{n,k}\leq \frac{\sqrt{\pi}}{2^n}\cdot \sqrt{\frac{(2k)!}{(k!)^2}} \cdot \sqrt{\frac{(2n-2k)!}{\left((n-k)!\right)^2}}\leq \sqrt{\pi}\cdot \frac{\sqrt{2^{2k+2n-2k}}}{2^n}=\sqrt{\pi},$$
 $ n \in \NN _0$, $ 0\leq k \leq n,$
\end{proof}

We note that the $d-$dimensional version of formula \eqref{lagfor2} can be written as
\begin{equation} \label{lagford}
l_n(x_1^2+\cdots +x_d^2)=
\sum_{|k| \leq n} c_{n, k} 
\prod_{j=1} ^d h_{2k_j}(x_j),
 \end{equation}
$ (x_1,\dots x_d)\in\RR^d$, and the coefficients 
$c_{n,k}$ satisfy
$|c_{n,k}|\leq C\cdot n^{2+3+\dots+d-1}$, 
$ k =  (k_1,k_2,\dots, k_d)\in\NN_0^d$,
for some $C>0$ that does not depend on $n$.

\subsection{Test spaces of formal Laguerre and Hermite series} \label{subsec:formal}

By  $\mathcal G_{\alpha}(\RR_{+}^d)$ and $\mathcal G_{0,\alpha}(\RR_{+}^d)$ 
we denote the spaces of formal Laguerre series  expansions
$\sum_{n\in\NN_0^d} a_{n}l_n$ 
that correspond to sequences $\{a_n\}_{n\in\NN^d_0} \in \ell_{\alpha/2}(\NN_0^d)$ and  $\{a_n\}_{n\in\NN^d_0} \in \ell_{0,\alpha/2}(\NN_0^d)$, $\alpha > 0$, respectively (see \eqref{eq:norm-sequence} and \eqref{eq:sequence_ell}). 

\par

Topologies  in $\mathcal G_{\alpha}(\RR_{+}^d)$ and $\mathcal G_{0,\alpha}(\RR_{+}^d)$  are  inherited from the ones in $\ell_{0,\alpha/2}(\NN_0^d)$ and  $\ell_{\alpha/2}(\NN_0^d)$, respectively, via  
the mappings
\begin{eqnarray*} \label{eq:mappingT}
    T: \ell_{\alpha/2}(\NN_0^d) \rightarrow\mathcal G_{\alpha}(\RR_{+}^d), \qquad T(\{a_n\}_{n\in\NN^d_0})=\sum_{n\in\NN_0^d} a_n l_n,  \nonumber
   \\    
   T: \ell_{0,\alpha/2}(\NN_0^d) \rightarrow\mathcal G_{0,\alpha}(\RR_{+}^d), \qquad T(\{a_n\}_{n\in\NN^d_0})=\sum_{n\in\NN_0^d} a_n l_n.
  \end{eqnarray*}
In the same spirit we define the spaces $\mathcal G'_{\alpha}(\RR_{+}^d)$ and $\mathcal G'_{0,\alpha}(\RR_{+}^d)$,
for $\alpha>0$, by using the sequence spaces $\ell_{\alpha/2} '(\NN_0^d)$ and  $\ell_{0,\alpha/2}'(\NN_0^d)$ instead, see \eqref{eq:sequence_ell_dual}. 
 We will also use the notation  $\mathcal G_{0}(\RR^d_{+})$ for the set of finite linear combinations of Laguerre functions and  $\mathcal G'_{0}(\RR^d_{+})$ for the set  of all formal Laguerre series expansions.
 
\par

For the reader's convenience, we  mention Banach spaces which will be 
used in Section 4. These are the images of $\ell^{\infty}_{[\vartheta_{h,\alpha/2}]}(\NN_0^d)$ and $\ell^{\infty}_{[1/\vartheta_{h,\alpha/2}]}(\NN_0^d)$, respectively under the mapping $T$.  More precisely, for a given $h>0$,
 \begin{eqnarray*} \label{eq:G-spaces-alpha-h}
 \mathcal G_{\alpha, h}(\rdp)= \{  \sum_{n\in\NN_0^d} a_n l_n: \|\sum_{n\in\NN_0^d} a_n l_n\|_{\mathcal G_{\alpha, h}} :=\sup_{n\in\NN_0^d} |a_n|e^{h|n|^{1/\alpha}}<\infty \}, \nonumber
 \\
 \mathcal G'_{\alpha,h}(\rdp)=\{  \sum_{n\in\NN_0^d} a_n l_n: \|\sum_{n\in\NN_0^d} a_n l_n\|_{\mathcal G'_{\alpha, h}}:=\sup_{n\in\NN_0^d} |a_n|e^{-h|n|^{1/\alpha}}<\infty\}.
  \end{eqnarray*} 
These spaces are Banach spaces equipped with the norms
$ \| \cdot \|_{\mathcal G_{\alpha, h}} $ and $ \| \cdot \|_{\mathcal G'_{\alpha, h}} $, respectively. Of course, we then have 
\begin{equation}  \label{eq:G-sequence-space}
\mathcal G_{\alpha}(\rdp)=\bigcup_{h>0} \mathcal G_{\alpha,h}(\rdp),\quad \mathcal G'_{\alpha}(\rdp)=\bigcap_{h>0} \mathcal G'_{\alpha, h}(\rdp), \qquad \alpha>0.
\end{equation}


The spaces of formal Hermite series  expansions
$f=\sum_{n\in\NN_0^d} a_{n} h_n$ that correspond to  sequences $\{a_n\}_{n\in\NN^d_0} \in \ell_{\alpha}(\NN_0^d)$, or  $\{a_n\}_{n\in\NN^d_0} \in \ell_{0,\alpha}(\NN_0^d)$, $\alpha \geq 0$, are denoted by $\mathcal H_{\alpha}(\RR ^d)$ and $\mathcal H_{0,\alpha}(\RR ^d)$, respectively, and the natural mappings
\begin{eqnarray*} \label{eq:mappingTHermite}
    T_h: \ell_{\alpha}(\NN_0^d) \rightarrow\mathcal H_{\alpha}(\RR ^d), \qquad T_h (\{a_n\}_{n\in\NN^d_0})=\sum_{n\in\NN_0^d} a_n h_n, \nonumber \\
        T_h: \ell_{0,\alpha}(\NN_0^d) \rightarrow\mathcal H_{0,\alpha}(\RR ^d), \qquad T_h (\{a_n\}_{n\in\NN^d_0})=\sum_{n\in\NN_0^d} a_n h_n,
                   \end{eqnarray*}

\begin{eqnarray*} \label{eq:mappingTHermiteDual}
                   T_h: \ell'_{\alpha}(\NN_0^d) \rightarrow\mathcal H'_{\alpha}(\RR ^d), \qquad T_h (\{a_n\}_{n\in\NN^d_0})=\sum_{n\in\NN_0^d} a_n h_n, \nonumber \\
        T_h: \ell'_{0,\alpha}(\NN_0^d) \rightarrow\mathcal H'_{0,\alpha}(\RR ^d), \qquad T_h (\{a_n\}_{n\in\NN^d_0})=\sum_{n\in\NN_0^d} a_n h_n,                   \end{eqnarray*}
induce  topologies on the target spaces, see \cite{Toft1}.
Utilizing \cite[Proposition 1.3]{JPTV} one can   prove that the spaces $\mathcal H_{\alpha}(\RR ^d)$ and $\mathcal H_{0, \alpha}(\RR ^d)$ are $(DFN)-$ and $(FN)-$spaces, respectively, while their duals are $(FN)-$ space  $\mathcal H'_{\alpha}(\RR ^d)$ and $(DFN)$- space  $\mathcal H'_{0, \alpha}(\RR ^d)$, respectively.

\subsection{Spaces of test functions and their dual spaces }
\label{lager}

In this subsection, we introduce the test function spaces relevant to our investigations. We first recall global spaces, i.e., spaces of test functions defined on $\RR^d$, and then focus our attention on spaces over $\RR^d_+$.

\subsubsection{Global Gelfand-Shilov and Pilipovi\'{c} spaces }

We first recall the definition of (the Fourier transform invariant) classical Gelfand-Shilov spaces.

Let  $\alpha,A>0$. The Banach space of all functions $\varphi\in C^{\infty}(\RR^d)$ satisfying
$$
\Vert\varphi\Vert_{\SSS^{\alpha,A}_{\alpha,A}}: =\sup_{p,k\in\NN^d_0}\frac{\left\|x^k
\partial^p\varphi(x)\right\|_{L^2(\RR^d)}}{
A^{|p+k|} k!^{\alpha} p!^{\alpha}}<\infty,
$$
is denoted by $\SSS^{\alpha,A}_{\alpha,A}(\RR^d)$ 
and $\Vert\cdot \Vert_{\SSS^{\alpha,A}_{\alpha,A}}$ defines a norm on
$\SSS^{\alpha,A}_{\alpha,A}(\RR^d)$.

The Gelfand-Shilov space of Roumieu type 
$\SSS^{\alpha}_{\alpha}(\RR^d)$, $\alpha> 0$,
is given by
$$
\SSS^{\alpha}_{\alpha}(\RR^d)= \bigcup_{A>0}
\SSS^{\alpha,A}_{\alpha,A}(\RR^d).
$$
The space  $\SSS^{\alpha}_{\alpha}(\RR^d)$ is nontrivial if and only if $\alpha\geq 1/2$, cf. \cite[Chapter IV, Section 8.2]{GS2}. Note that  $h_n\in\SSS^{1/2}_{1/2}(\RR^d)$, $n\in\NN^d_0$.
 
The  dual space of $\SSS^{\alpha}_{\alpha}(\RR^d)$, 
$\alpha \geq 1/2$, is the
Gelfand-Shilov space of 
Roumieu type ultradistributions
$(\SSS^{\alpha}_{\alpha})'(\RR^d)$,
endowed  with  the strong dual topology. 

The Gelfand-Shilov space of Beurling type 
$\Sigma^{\alpha}_{\alpha}(\RR^d)$, $\alpha> 0$,
is given by
$$
\Sigma^{\alpha}_{\alpha}(\RR^d)= \bigcap_{A>0}
\SSS^{\alpha,A}_{\alpha,A}(\RR^d).$$
The space  $\Sigma^{\alpha}_{\alpha}(\RR^d)$ is nontrivial if and only if $\alpha > 1/2$, cf. 
 \cite{SP3}. 

The  dual space of $\Sigma^{\alpha}_{\alpha}(\RR^d)$, 
$\alpha > 1/2$, is the
Gelfand-Shilov space of 
Roumieu type ultradistribution
$(\Sigma^{\alpha}_{\alpha})'(\RR^d)$,
endowed  with  the strong dual topology. 

The spaces $\SSS^{\alpha}_{\alpha}(\RR^d)$ 
and $\Sigma ^{\alpha}_{\alpha}(\RR^d)$  obviously increase with respect to $\alpha$,
and  $\Sigma ^{\alpha}_{\alpha}(\RR^d) \subset \SSS^{\alpha}_{\alpha}(\RR^d) $.

\par

Now we introduce Pilipovi\'c spaces on $\RR^d$, \cite{SP3, Toft1} .

The function  $f\in C^{\infty}(\RR^d)$ belong to
the (global) Pilipovi\'{c} spaces of Roumieu type ${\mathbf S}^{\alpha} _{\alpha}(\RR^d)$
(of Beurling type $\mathbf{\Sigma} ^{\alpha} _{\alpha}(\RR^d)$), $ \alpha >0$,
if there exist $h>0$ and $C>0$ (for every $h>0$  there exists $C=C_h>0$)
such that the following estimate holds:
\begin{equation}\label{eq:Pilipovic}
    \|H^N f\|_{L^{\infty}(\RR^d)}\leq C h^N N!^{2\alpha}.
\end{equation}

The following result is Theorem 5.2 in \cite{Toft1}.

\begin{prop}\label{char H and S}
We have 
$${\mathcal H}_{\alpha}(\RR ^d) = \mathbf{S}^{\alpha}_{\alpha}(\RR^d)
\quad \text{and} \quad
\mathcal H_{0,\alpha}(\RR ^d) = \mathbf{\Sigma} ^{\alpha}_{\alpha}(\RR^d), \qquad \alpha > 0.
$$
In addition,
$${\mathcal H}_{\alpha}(\RR ^d) = \SSS^{\alpha}_{\alpha}(\RR^d),
\quad  \alpha \geq 1/2 \quad \text{and} \quad  
\mathcal H_{0,\alpha}(\RR ^d) = \Sigma ^{\alpha}_{\alpha}(\RR^d), \qquad \alpha > 1/2.
$$
\end{prop}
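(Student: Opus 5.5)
The plan is to work throughout on the level of Hermite coefficients. The basic identity is
\[
c_n(H^N f)=(2|n|+d)^N c_n(f),
\]
which holds because $H$ is symmetric and $H h_n=(2|n|+d)h_n$. I will prove the identification with Pilipovi\'c spaces in two directions, and then derive the Gelfand--Shilov statement for $\alpha\geq 1/2$ separately, using the ladder-operator structure of $H$.

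\emph{Hermite decay implies the Pilipovi\'c estimate.} Suppose $|c_n(f)|\leq C e^{-r|n|^{1/(2\alpha)}}$. Using $\|h_n\|_{L^\infty}\leq \pi^{-d/4}$, we get
\[
\|H^N f\|_{L^\infty}\leq C\sum_n (2|n|+d)^N e^{-r|n|^{1/(2\alpha)}} .
\]
I split the exponential as $e^{-r t/2}\cdot e^{-r t/2}$, writing $t=|n|^{1/(2\alpha)}$. One factor makes the sum over $n$ converge. For the other I use the elementary maximisation
\[
\sup_{s\geq0} s^N e^{-(r/2)s^{1/(2\alpha)}}\leq (4\alpha N/(er))^{2\alpha N}\leq C_1 h^N N!^{2\alpha},
\]
where the last step is Stirling's formula; this gives \eqref{eq:Pilipovic}. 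In the Beurling case the decay holds for every $r$, so the estimate holds for every $h$.

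\emph{The Pilipovi\'c estimate implies Hermite decay.} Here I write
\[
c_n(f)=(2|n|+d)^{-N}\int H^Nf\, h_n\,dx,
\]
so that $|c_n(f)|\leq (2|n|+d)^{-N}\|H^Nf\|_{L^\infty}\|h_n\|_{L^1}$. The required input is a polynomial bound $\|h_n\|_{L^1}\leq C(1+|n|)^{M}$. It follows from the classical estimate for $\|h_n\|_{L^1(\RR)}$ (of order $n^{1/4}$) together with the tensor structure. This yields
\[
|c_n(f)|\leq C(1+|n|)^M\inf_N h^N N!^{2\alpha}(2|n|+d)^{-N}\leq C' e^{-r|n|^{1/(2\alpha)}},
\]
choosing $N\approx (|n|/h)^{1/(2\alpha)}$, with $r$ depending on $h$. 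For the Beurling spaces, arbitrary $h$ gives arbitrary $r$. Smoothness of $f$ is needed to make sense of $H^Nf$, and on the coefficient side it follows from the rapid decay. This proves $\mathcal H_\alpha=\mathbf S^\alpha_\alpha$ and $\mathcal H_{0,\alpha}=\mathbf\Sigma^\alpha_\alpha$.

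\emph{Gelfand--Shilov spaces, inclusion $\SSS^\alpha_\alpha\subset\mathcal H_\alpha$.} Expanding $H^N$ gives at most $(2d)^N$ monomials of the form $x^k\partial^p$ with $|k|+|p|\leq 2N$, after commuting factors, which only produces lower-order terms with controlled constants. The $L^2$ bounds $A^{|k+p|}k!^\alpha p!^\alpha$ then give $\|H^Nf\|_{L^2}\leq C h^N N!^{2\alpha}$. From this the coefficient decay follows exactly as above, now using $\|h_n\|_{L^2}=1$ in place of the $L^1$ bound.

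\emph{Gelfand--Shilov spaces, inclusion $\mathcal H_\alpha\subset\SSS^\alpha_\alpha$.} I write $x_j=(A_j+A_j^*)/\sqrt2$ and $\partial_j=(A_j-A_j^*)/\sqrt2$, where $A_j h_n=\sqrt{n_j}\,h_{n-e_j}$ and $A_j^*h_n=\sqrt{n_j+1}\,h_{n+e_j}$. With $m=|k+p|$, the operator $x^k\partial^p$ expands into $2^m$ words. Each word moves $h_n$ to some $h_{n'}$ with $|n'-n|\leq m$, and carries a factor at most $(|n|+m)^{m/2}$. Orthogonality then gives
\[
\|x^k\partial^p f\|_{L^2}\leq 2^m\Big(\sum_n (|n|+m)^{m}|c_n|^2\Big)^{1/2}.
\]

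\emph{Main obstacle.} The delicate step is bounding
\[
\sup_n (|n|+m)^{m/2}e^{-r|n|^{1/(2\alpha)}}\lesssim B^m m^{\alpha m}\lesssim B'^{m}k!^\alpha p!^\alpha .
\]
I will handle it by treating $|n|\leq m$ and $|n|>m$ separately. The last inequality uses $m^m\leq e^m (k+p)!\leq (2e)^m k!\,p!$, and $m^{m/2}\leq m^{\alpha m}$ requires $\alpha\geq1/2$; this is where the restriction $\alpha\geq 1/2$ enters. The Beurling case with $\alpha>1/2$ is handled identically, with "for every" in place of "for some" for the constants.
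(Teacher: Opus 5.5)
The paper gives no argument for this proposition: it is quoted as Theorem 5.2 of \cite{Toft1}. For $\alpha\ge 1/2$ this is the classical Hermite-coefficient characterization of Gelfand--Shilov spaces due to Pilipovi\'c. Your proposal is therefore a genuinely different, self-contained route, and it is correct in substance.

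Your route has two halves.
\begin{itemize}
\item The eigenvalue identity, together with a uniform $L^\infty$ bound and a polynomial $L^1$ bound for $h_n$, gives $\mathcal H_\alpha=\mathbf S^\alpha_\alpha$ and $\mathcal H_{0,\alpha}=\mathbf\Sigma^\alpha_\alpha$ for all $\alpha>0$.
\item Normal ordering of $H^N$ and the ladder-operator expansion of $x^k\partial^p$ give the comparison with $\SSS^\alpha_\alpha$ and $\Sigma^\alpha_\alpha$.
\end{itemize}
The citation buys brevity. Your argument buys transparency: it shows where the threshold $1/2$ comes from, namely that the low modes $|n|\le m$ produce $m^{m/2}$. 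Since your bounds are uniform on the Banach steps, the identifications you get are topological, not just set-theoretic.

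Four places need tightening.

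\emph{Pilipovi\'c estimate implies decay.} Here $f$ is only smooth with $H^Nf$ bounded, so avoid integration by parts. The case $N=0$ gives $f\in L^\infty\subset\mathcal S'(\RR^d)$, so the classical $H^Nf$ coincides with the distributional one. Then $\langle H^Nf,h_n\rangle=\langle f,H^Nh_n\rangle$ holds by definition. Completeness of $\{h_n\}$ in $\mathcal S'$ then identifies $f$ with its Hermite series.

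\emph{The inclusion $\SSS^\alpha_\alpha\subset\mathcal H_\alpha$.} The phrase ``commuting only produces lower-order terms with controlled constants'' is where the real work is. It is also a second place where $\alpha\ge1/2$ is used, which you did not flag. Normal ordering a word of length $2N$ is a sum over partial matchings, each pairing a $\partial_i$ with a later $x_i$. So there are at most $\frac{(2N)!}{(2N-2j)!\,2^j j!}$ terms of degree $2N-2j$. Each has coefficient $\pm1$ and $L^2$-norm at most $CA^{2N-2j}(2N-2j)!^{\alpha}$. For $\alpha\ge 1/2$ you have $\bigl((2N)!/(2N-2j)!\bigr)^{1-\alpha}\le (2N)^j$ and $N^j/j!\le e^N$. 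Hence
$\|H^Nf\|_{L^2}\le C(N+1)(2de)^N\max(1,A)^{2N}(2N)!^{\alpha}$.
For $\alpha<1/2$ this termwise count breaks down; for instance, $\partial^N(x^N\varphi)$ contains $N!\,\varphi$.

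\emph{The Beurling cases.} They are not quite ``identical'' to the Roumieu cases.
\begin{itemize}
\item In the inclusion $\Sigma^\alpha_\alpha\subset\mathcal H_{0,\alpha}$, the contracted terms carry no small power of $A$. They are absorbed against $h^N$ for every $h>0$ only through the spare factor $N!^{2\alpha-1}$.
\item In $\mathcal H_{0,\alpha}\subset\Sigma^\alpha_\alpha$, the region $|n|\le m$ contributes $(2m)^{m/2}$ independently of $r$. This is $\le C_A A^m m^{\alpha m}$ for every $A>0$ only because $m^{-(\alpha-1/2)m}$ decays superexponentially.
\end{itemize}
In both places you need the strict inequality $\alpha>1/2$, and you should say so explicitly.

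\emph{Multi-indices.} The correct chain is $m^m\le e^m|k+p|!\le (de)^m(k+p)!\le(2de)^m k!\,p!$. The extra factor $d^m$ is harmless.
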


\par

Thus, the spaces of formal Hermite expansions
$\mathcal H_{\alpha}(\RR ^d)$ and $\mathcal H_{0,\alpha}(\RR ^d)$,
coincide with Pilipovi\'{c} spaces $\mathbf{S}^{\alpha} _{\alpha}(\RR^d)$ and  $\mathbf{\Sigma} ^{\alpha} _{\alpha}(\RR^d)$, $\alpha >0$, respectively.
In the sequel we use the notation
$\mathcal H_{\alpha}(\RR ^d)$ and $\mathcal H_{0,\alpha}(\RR ^d)$,
instead of  $\mathbf{S}^{\alpha} _{\alpha}(\RR^d)$ and  $\mathbf{\Sigma} ^{\alpha} _{\alpha}(\RR^d)$,
respectively.

\subsubsection{G-type spaces and $P-$spaces on $\RR^d_{+}$}

Recall that elements from
$ \mathcal{S}(\mathbb{R}^d_+)  $ and $\mathcal{S}'(\mathbb{R}^d_+)  $
admit Laguerre expansions. More precisely, for every $f\in \mathcal{S}(\mathbb{R}^d_+)  $ the sequence $\{a_n(f)\}_{n\in\NN_0^d}  $ belongs to $s(\NN_0^d)$.  Conversely, for any sequence $\{a_n\}_{n\in\NN_0^d}\in s(\NN_0^d)$ the series $\displaystyle\sum_{n\in\NN_0^d} a_n l_n$ converges uniformly and  defines a function in $\mathcal{S}(\mathbb{R}^d_+) $. In the same manner, every sequence $\{a_n\}_{n\in\NN_0^d}\in s'(\NN_0^d)$ gives rise to the element in $\mathcal{S}'(\mathbb{R}^d_+)$ and every element $f$ of the dual $\mathcal{S}'(\mathbb{R}^d_+)$ admits Laguerre  series $f=\sum_{n\in\NN_0^d} a_n(f) l_n$ where $\{a_n (f)\}_{n\in \NN_0^d}\in s'(\NN_0^d)$, \cite[Theorem 3.1]{Sm}.

We also note that 
\begin{equation*} \label{eq:S'_N}
    \mathcal{S}'(\mathbb{R}^d_+)=\bigcup_{N>0} \mathcal{S}'_N (\mathbb{R}^d_+).
\end{equation*}
where
 $$
 \mathcal{S}'_N (\mathbb{R}^d_+)=\{f\in \mathcal{S}'(\mathbb{R}^d_+):
 \|f\|_{\mathcal{S}'_N (\mathbb{R}^d_+)}:=\sup_{n\in\NN_0^d}  a_n(f) \langle n\rangle^{-N}<\infty \}, 
 \qquad N>0.
$$

Next we recall the definition  of $G$-type spaces as well as  their distribution spaces, as given in 
\cite{Duran92, Duran93, SSB, SSBb, SSSB}.

\par

\begin{defn} \label{def:G-type-spaces}
 Let $A>0$. The
space of all $f\in\SSS(\RR^d_+)$ such that
\begin{equation*} \label{seminorme-bez-dodatka}
\sup_{p,k\in\NN^d_0}\frac{\|x^{(p+k)/2} \partial^pf(x)\|_{L^2(\RR^d_+)}}
{A^{|p+k|}k^{(\alpha/2)k}p^{(\alpha/2)p}}<\infty
\end{equation*}
is denoted by $g^{\alpha,A}_{\alpha,A}(\RR^d_+)$. 
\end{defn}

With the seminorms
\begin{equation*} 
\sigma_{A,j}^{\alpha, \alpha} (f)=\sup_{p,k\in\NN^d_0}\frac{\|x^{(p+k)/2} \partial^pf(x)\|_{L^2(\RR^d_+)}}
{A^{|p+k|}k^{(\alpha/2)k}p^{(\alpha/2)p}}+\sup_{\substack{|p|\leq
j\\ |k|\leq j}} \sup_{x\in\RR^d_+}|x^k \partial ^p f(x)|,\,\, j\in\NN_0,
\end{equation*}
it becomes a Frech\'et space.

The spaces $G^{\alpha}_{\alpha}(\RR^d_+)$ and
$g^{\alpha}_{\alpha}(\RR^d_+)$ are 
defined as the inductive and projective limits, respectively, of the spaces
$g^{\alpha,A}_{\alpha,A}(\RR^d_+)$ as $A$ varies:
\begin{equation*}
G^{\alpha}_{\alpha}(\RR^d_+)=
\bigcup_{A>0}
g^{\alpha,A}_{\alpha,A}(\RR^d_+), \qquad
g^{\alpha}_{\alpha}(\RR^d_+)=
\bigcap_{A>0}
g^{\alpha,A}_{\alpha,A}(\RR^d_+).
\end{equation*}
As standard, these spaces are endowed with inductive, respectively projective limit topology.
We refer to  $G^{\alpha}_{\alpha}(\RR^d_+)$, $\alpha\geq 1$, and $g^{\alpha}_{\alpha}(\RR^d_+)$, $\alpha> 1$, as  $G$-type spaces of Roumieu and Beurling type,  respectively.

Their strong duals
are called
$G$-type  ultradistribution spaces 
and are denoted by $(G^{\alpha}_{\alpha})'(\RR^d_+)$ and
$(g^{\alpha}_{\alpha})'(\RR^d_+)$, respectively.

Now we introduce $P-$spaces by using the iterates (powers) of the Laguerre operator, \cite{JPTV}.

\begin{defn} \label{def:PilipovicOrthant}
Let $h>0$, and $\alpha>0$.  We define the Banach space $ \mathbf G^{\alpha,h}_{\alpha,h}(\RR^d_+)$  as the space of all  $ f \in C^{\infty} (\RR^d_+)$
such that
\begin{equation*}\label{eta}
\eta_h^\alpha(f) : =\sup_{N\in\NN}\frac{\Vert E^N f\Vert_{L^2(\RR^d_+)}}{h^{N} N!^\alpha}<\infty.
\end{equation*}

The Pilipovi\'c spaces ($P-$spaces) over $\RR^d_+$, $\mathbf{G}^{\alpha}_{\alpha}(\RR^d_+)$ and  $\mathbf{g}^{\alpha}_{\alpha}(\RR^d_+)$ of Roumieu and Beurling type respectively, are defined by
\begin{equation*}
 \mathbf G^{\alpha}_{\alpha}(\RR^d_+)=
\bigcup_{h>0}  \mathbf
G^{\alpha,h}_{\alpha,h}(\RR^d_+), \qquad \text{and} \qquad
\mathbf{g}^{\alpha}_{\alpha}(\RR^d_+)=
\bigcap_{h>0}  \mathbf
G^{\alpha,h}_{\alpha,h}(\RR^d_+).
\end{equation*}
\end{defn}
The spaces $ \mathbf G^{\alpha}_{\alpha}(\RR^d_+)$ and
$ \mathbf g^{\alpha}_{\alpha}(\RR^d_+)$ are endowed with 
the inductive, respectively projective limit topology.

The strong dual spaces of 
$ \mathbf G^{\alpha}_{\alpha}(\RR^d_+)$ and
$\mathbf{g}^{\alpha}_{\alpha}(\RR^d_+)$ are denoted by 
$ (\mathbf G^{\alpha}_{\alpha})'(\RR^d_+)$ and
$(\mathbf{g}^{\alpha}_{\alpha})'(\RR^d_+)$, respectively.

 We note that the condition $f\in\SSS(\RR^d_+)$ from 
Definition \ref{def:G-type-spaces} is relaxed in Definition \ref{def:PilipovicOrthant}
to $ f \in C^{\infty} (\RR^d_+)$. 
In fact, by \cite[Lemma 3.1]{JPTV}, it follows that
$\Vert E^N f\Vert_{L^2(\RR^d_+)} < \infty $  implies $f\in\SSS(\RR^d_+)$.


We end the section with an important result which
describes $P-$spaces on $\RR^d_{+}$ in terms of suitable corresponding sequence spaces. 

\begin{thm}\label{glavnabeurling} 
\cite[Theorems 3.3, 3.5 and 3.6]{JPTV}
Let $\alpha > 0$.
\begin{itemize}
\item[(i)] If   $f\in \mathcal S(\mathbb R_+^d)$ and $\{a_n(f)\}_{n\in\NN^d_0}\in \ell_{\alpha/2} (\NN_0^d)$   ($\{a_n(f)\}_{n\in\NN^d_0}\in \ell_{0,\alpha/2}(\NN^d_0)$), then 
$
 f \in \mathbf{G}^{\alpha}_{\alpha}(\RR^d_+)$   ($ f \in \mathbf{g}^{\alpha}_{\alpha}(\RR^d_+)$).
\item[(ii)] If  $ f \in \mathbf{G}^{\alpha}_{\alpha}(\RR^d_+)$  ($ f \in \mathbf{g}^{\alpha}_{\alpha}(\RR^d_+)$), 
then $\{a_n(f)\}_{n\in\NN^d_0}\in  \ell_{\alpha/2} (\NN_0^d)$  ($\{a_n(f)\}_{n\in\NN^d_0}\in  \ell_{0,\alpha/2} (\NN_0^d)$).
\item[(iii)] If $a_n\in \ell_{\alpha/2}'(\NN_0 ^d)$ 
($a_n\in \ell_{0,\alpha/2}'(\NN^d _0)$)
then  the series
 $\displaystyle \sum_{n\in\NN^d_{0}}a_n l_n$ converges in the topology of
$(\mathbf{G}^{\alpha}_{\alpha})' (\RR^d_+)$
(in the topology of $(\mathbf{g}^{\alpha}_{\alpha})'(\RR^d_+)$),
 thus defining an element $ f \in (\mathbf{G}^{\alpha}_{\alpha})' (\RR^d_+)$
 ($ f \in (\mathbf{g}^{\alpha}_{\alpha})'(\RR^d_+)$),
 with $ a_n = a_n (f) = (f,l_n)$, $ n \in \NN^d_0$.
\item[(iv)] If $ f \in (\mathbf{G}^{\alpha}_{\alpha})'(\RR^d_+)$
($ f \in (\mathbf{g}^{\alpha}_{\alpha})'(\RR^d_+)$), 
then for the sequence
of its Laguerre coefficients $ \{a_n(f)\}_{n\in\NN^d_0}= \{(f,l_n)\}_{n\in\NN^d_0}$ we have $a_n(f) \in \ell'_{\alpha/2}(\NN^d_{0})$
($a_n(f) \in \ell'_{0,\alpha/2}(\NN^d_{0})$).
\end{itemize}
\end{thm}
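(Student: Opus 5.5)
The plan is to reduce everything to Parseval's identity for the Laguerre basis combined with the eigenvalue relation $E l_n=|n|\, l_n$. For $f\in\SSS(\RR^d_+)$, integration by parts in the divergence form $\frac{\partial}{\partial x_j}(x_j\frac{\partial}{\partial x_j})$ gives $a_n(E^Nf)=|n|^N a_n(f)$. The boundary terms at $x_j=0$ vanish because of the factor $x_j$, and those at infinity vanish because of the rapid decay of $f$ and of $l_n$. Consequently
\begin{equation*}
\|E^Nf\|_{L^2(\RR^d_+)}^2=\sum_{n\in\NN_0^d}|n|^{2N}|a_n(f)|^2 .
\end{equation*}
Throughout, I will use that $\vartheta_{r,\alpha/2}(n)=e^{r|n|^{1/\alpha}}$, so the sequence spaces in the statement are exactly the weights matching the growth $h^NN!^\alpha$.

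For (i), assume $|a_n(f)|\le Ce^{-r|n|^{1/\alpha}}$ and write $e^{-r|n|^{1/\alpha}}=e^{-\frac r2|n|^{1/\alpha}}e^{-\frac r2|n|^{1/\alpha}}$. The first factor absorbs $|n|^N$, since elementary calculus gives
\begin{equation*}
\sup_{t\ge0}t^Ne^{-\frac r2 t^{1/\alpha}}=\Big(\frac{2\alpha N}{er}\Big)^{\alpha N},
\end{equation*}
and Stirling's formula bounds this by $C'h^NN!^\alpha$ with $h\sim(2\alpha/r)^\alpha$. The second factor is square summable over $\NN_0^d$, so Parseval yields $\eta^\alpha_h(f)<\infty$. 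For (ii), assume $\|E^Nf\|_{L^2}\le Ch^NN!^\alpha$. By \cite[Lemma 3.1]{JPTV} we have $f\in\SSS(\RR^d_+)$, so the identity above applies and gives $|n|^N|a_n(f)|\le Ch^NN!^\alpha$ for every $N$. Taking the infimum over $N$ and using $N!^\alpha\le N^{\alpha N}$ then gives
\begin{equation*}
|a_n(f)|\le C\inf_N\frac{h^NN^{\alpha N}}{|n|^N}\le C'e^{-c|n|^{1/\alpha}},\qquad c\asymp\alpha h^{-1/\alpha}.
\end{equation*}
Here one chooses $N\approx |n|^{1/\alpha}/(eh^{1/\alpha})$, and a polynomial loss is harmless. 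In the Beurling case the same computations run with the quantifiers reversed: every $h$ corresponds to every $r$ and vice versa, since $h\to0$ exactly when $r\to\infty$.

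For (iii) and (iv), I would first upgrade (i)--(ii) to a topological statement. The estimates above show that $T$ maps each step $\ell^\infty_{[\vartheta_{r,\alpha/2}]}$ continuously into some $\mathbf G^{\alpha,h}_{\alpha,h}(\RR^d_+)$, and conversely, so $T$ is an isomorphism of the inductive (respectively projective) limits. Hence the dual of $\mathbf G^\alpha_\alpha(\RR^d_+)$ is identified with $\ell'_{\alpha/2}(\NN_0^d)$ via $f\mapsto\{(f,l_n)\}$, which proves (iv). For (iii), given $\{a_n\}\in\ell'_{\alpha/2}(\NN_0^d)$, I would show that the partial sums $\sum_{|n|\le M}a_nl_n$ are Cauchy uniformly on bounded sets of $\mathbf G^\alpha_\alpha(\RR^d_+)$. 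Such a bounded set lies bounded in a single step $\mathbf G_{\alpha,h}$, by regularity of the (DFS) inductive limit, so its coefficients satisfy $|b_n|\le Be^{-r|n|^{1/\alpha}}$. Meanwhile $|a_n|\le C_\varepsilon e^{\varepsilon|n|^{1/\alpha}}$ with $\varepsilon<r$, so the tails $\sum_{|n|>M}|a_nb_n|$ tend to zero uniformly; the Beurling case is symmetric.

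The main obstacle I expect is this topological part: justifying boundedness-regularity of the inductive limit and the identification of the strong dual topology with that of the sequence space. I would settle both by transport of structure through $T$, using the (DFN)/(FN) properties of $\ell_{\alpha/2}$ and $\ell_{0,\alpha/2}$ already recorded in the paper.
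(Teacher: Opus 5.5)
The paper gives no proof of its own here. The theorem is quoted from \cite[Theorems 3.3, 3.5 and 3.6]{JPTV}, and your argument is a correct, self-contained reconstruction along the lines the paper attributes to \cite{JPTV} (Lemma 3.1, inequality (9), Remark 3.4, Proposition 1.3): Parseval with $a_n(E^Nf)=|n|^Na_n(f)$, the two-way conversion between $h^NN!^\alpha$ and $e^{-r|n|^{1/\alpha}}$, and then transport of the dual statements through $T$ and the sequence-space duality. One detail to make explicit: the supremum defining $\eta^\alpha_h$ must include $N=0$, i.e.\ it must control $\|f\|_{L^2}$. Since $El_0=0$, otherwise $a_0(f)$ is not bounded by $\eta^\alpha_h(f)$, and your step-to-step continuity estimate for the coefficient map in the topological upgrade needs that bound.
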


Moreover, 
$$
\mathbf{G}^{\alpha}_{\alpha} (\RR^d_+)=\mathcal G_\alpha (\rdp),\quad \mathbf{g}^{\alpha}_{\alpha} (\rdp)=\mathcal G_{0,\alpha}(\rdp),
$$
$$
(\mathbf{G}^{\alpha}_{\alpha})' (\RR^d_+)=\mathcal G'_{\alpha}(\rdp),\quad (\mathbf{g}^{\alpha}_{\alpha})' (\rdp)=\mathcal G'_{0,\alpha}(\rdp), \qquad \alpha > 0,
$$
hold both algebraically and topologically, \cite[Remark 3.4]{JPTV}. 
From these results we conclude 
$$
\mathcal{G}_{\alpha}(\RR^d_+)=G^{\alpha}_{\alpha} (\RR^d_+),\quad \mathcal{G}'_{\alpha}(\RR^d_+)=(G^{\alpha}_{\alpha})' (\RR^d_+),\quad \alpha\geq 1,
$$
$$
\mathcal{G}_{0,\alpha}(\RR^d_+)=g^{\alpha}_{\alpha} (\RR^d_+),\quad \mathcal{G}'_{0,\alpha}(\RR^d_+)=(g^{\alpha}_{\alpha})' (\RR^d_+),\quad \alpha > 1.
$$

For  simplicity,  in the sequel we will use (formal Laguerre series) notation 
$\mathcal G_\alpha (\rdp)$, $\mathcal G_{0,\alpha}(\rdp)$,
$\mathcal G'_\alpha (\rdp)$, and $\mathcal G'_{0,\alpha}(\rdp)$,
for $P-$spaces.

From the previous theorem and \cite[Proposition 1.3]{JPTV} it follows that the spaces $\mathcal{G}_{\alpha}(\RR^d_+)$ and $\mathcal{G}_{0,\alpha}(\RR^d_+)$ are $(DFN)-$ and $(FN)-$spaces respectively, while their duals $\mathcal{G'}_{\alpha}(\RR^d_+)$ and $\mathcal{G'}_{0,\alpha}(\RR^d_+)$ are $(FN)-$ and $(DFN)-$ spaces, respectively.

\par

\subsection{Radially symmetric Pilipovi\'{c} spaces} \label{subsec:radially-symmetric}


A function $f: \RR^d \to \RR$ is called {\em  radially symmetric } 
or {\em radial }, if its value depends only on the distance from the origin. That is, there exists a function $g:[0,\infty) \to \mathbb{R}$ such that
\begin{equation*} 
f(x_1,\dots, x_d) = g\big(\sqrt{x_1^2 + \dots + x_d ^2}\,\big)
= g(r), \qquad (x_1, \dots x_d)\in \mathbb{R}^d,    
\end{equation*}
where $\ds r=\sqrt{x_1^2 + \dots + x_d^2} \geq 0,$
i.e. $f: \RR^d \to \RR$ is radial if there exists $g:[0,\infty) \to \mathbb{R}$ such that $f=\Phi(g)$.

\begin{rem}\label{orthogonal}
Using (\ref{ortogonalnost}) it follows that if  $f:\RR^d\rightarrow \RR$ 
is a  radial function and if $A\in O(d)$, then 
\begin{equation}\label{RadialOrt}
f(Ax)=f(x), \qquad x\in\RR^d.\end{equation}
Moreover, the relation \eqref{RadialOrt} characterizes radial functions. 
\end{rem}

  Obviously, every radial function $f$ is even in the sense of  
\eqref{eq:even}.  By  (\ref{RadialOrt})
it is easy to see that $f:\RR\to\RR$ is even if and only if it is radial, since $O(1)=\{-1,1\}$. 

The space of all even functions from 
$ \mathcal H_{\alpha/2}(\RR ^d) $
($\mathcal H_{0,\alpha/2}(\RR ^d) $), 
$ \alpha > 0$, is denoted by
$\mathcal H_{\alpha/2,even}(\RR^d )$ ( $\mathcal  H_{0,\alpha/2,even}(\RR^d )$).

The space of  radial functions from
$\mathcal  H_{\alpha/2}(\RR^d )$  ($ \mathcal  H_{0,\alpha/2}(\RR^d )$) 
denoted by $\mathcal H_{\alpha/2,radial}(\RR^d )$ ($\mathcal H_{0,\alpha/2,radial}(\RR^d )$) 
is a closed subspace in $\mathcal H_{\alpha/2}(\RR^d )$ ($\mathcal H_{0,\alpha/2}(\RR^d )$).

Every even  function $g:\RR\rightarrow \RR $ gives rise to 
radially symmetric function $\modu{g}$ on $\RR^d$:
\begin{equation}\label{eq:tilda}
\modu{g}(x_1,x_2,\dots, x_d)= (\Phi g) (x_1,x_2,\dots, x_d)
= g(\sqrt{x_1^2+ \cdots + x_d^2}), 
\end{equation}
$ x_1, \dots x_d\in \RR$, see \eqref{eq:Phi}.
Throughout this section, we restrict our attention mainly to the case $d=2$,
since the computations in higher dimensions become considerably more cumbersome (cf. \eqref{lagford}).


%

\begin{lem}\label{radijalna1}
Let  $\alpha>0$ and  $g\in  \mathcal H_{\alpha/2,even}(\RR )$ ($g\in  \mathcal H_{0,\alpha/2,even}(\RR ).$) Then the radially symmetric function $\modu{g} = \Phi g$ given by \eqref{eq:tilda} belongs to 
$ \mathcal H_{\alpha/2, radial}(\RR^2)$  ($ \mathcal H_{0,\alpha/2, radial}(\RR^2)$).
\end{lem}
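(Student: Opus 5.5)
Since $g$ is even, its Hermite expansion contains only even-index functions: $g=\sum_{k\in\NN_0} c_{2k}(g)h_{2k}$, with $\{c_n(g)\}\in\ell_{\alpha/2}(\NN_0)$. This means $|c_{2k}(g)|\le Ce^{-r(2k)^{1/\alpha}}$ for some $r>0$ in the Roumieu case, and for every $r>0$ in the Beurling case. The plan is to go through the Laguerre expansion in the variable $s=x^2$, and then apply Lemma \ref{lm:lager-hermit-connection}. Concretely, I would set $G(s)=g(\sqrt s)$ for $s\ge 0$, i.e. $G=\mathcal J(g)$. Then $\modu g(x_1,x_2)=G(x_1^2+x_2^2)$, and if $G=\sum_n a_n l_n$, formula \eqref{lagfor2} gives
$$
\modu g(x_1,x_2)=\sum_{n\in\NN_0}(-1)^n a_n\sum_{k=0}^n c_{n,k}\,h_{2k}(x_1)h_{2n-2k}(x_2).
$$
Hence the Hermite coefficient of $\modu g$ at the index $(2k,2n-2k)$ equals $(-1)^n a_n c_{n,k}$, and all other coefficients vanish. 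Since $0\le c_{n,k}\le\sqrt\pi$ and $|(2k,2n-2k)|=2n$, the estimate $|a_n|\le Ce^{-r n^{1/\alpha}}$ would give decay $e^{-r' (2n)^{1/\alpha}}$ with $r'=r2^{-1/\alpha}$. That is precisely membership in $\ell_{\alpha/2}(\NN_0^2)$, since $\vartheta_{r,\alpha/2}(m)=e^{r|m|^{1/\alpha}}$. Proposition \ref{char H and S} then identifies $\modu g$ with an element of $\mathcal H_{\alpha/2}(\RR^2)$, and it is clearly radial. The Beurling case is the same argument with ``for every $r$''.

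The main step, and the expected obstacle, is to show that $\{a_n(G)\}\in\ell_{\alpha/2}(\NN_0)$, i.e. that $\mathcal J$ maps even elements of $\mathcal H_{\alpha/2}(\RR)$ into $\mathcal G_\alpha(\RR_+)$. My first attempt would go through the operators rather than through explicit coefficients. For $s=x^2$ one computes $s\partial_s^2+\partial_s=\tfrac14(\partial_x^2+x^{-1}\partial_x)$, which is one quarter of the two-dimensional radial Laplacian. This suggests working directly in $\RR^2$. Since $\modu g$ is radial, $H_{\RR^2}\modu g=\Phi\big((-\partial_r^2-r^{-1}\partial_r+r^2)g\big)$, and by the computation above $E$ acting on $G$ corresponds to $\tfrac14 H_{\RR^2}-\tfrac12$ acting on $\modu g$, with $s=|x|^2$. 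So it suffices to bound $\|H_{\RR^2}^N\modu g\|_{L^\infty}$ by $Ch^NN!^{\alpha}$, which is the characterization \eqref{eq:Pilipovic} with $2\cdot\tfrac{\alpha}{2}=\alpha$.

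The difficulty is that $r^{-1}\partial_r$ is not a one-dimensional operator acting nicely on $\mathcal H_{\alpha/2}(\RR)$. I would handle this on the basis functions. Each $h_{2k}(x)$ is an even polynomial times a Gaussian, $h_{2k}(x)=p_k(x^2)e^{-x^2/2}$, so $\Phi h_{2k}(x_1,x_2)=p_k(|x|^2)e^{-|x|^2/2}$ is a radial polynomial of degree $2k$ times a Gaussian. It therefore lies in the span of the $h_m$ with $|m|\le 2k$. Moreover, by the relation $H_{2k}(x)=(-1)^k2^{2k}k!L_k^{-1/2}(x^2)$ and the Laguerre connection formula, its coefficients can be controlled by a power of $k$ in the $\ell^2$ norm. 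Indeed, $\|\Phi h_{2k}\|_{L^2(\RR^2)}$ is bounded by a polynomial in $k$, because $\int|h_{2k}(r)|^2 r\,dr$ grows at most like $\sqrt{k}$. Summing $g=\sum c_{2k}h_{2k}$ term by term then gives Hermite coefficients of $\modu g$ at level $|m|=2n$ bounded by $\sum_{k\ge n}|c_{2k}|\,k^{C}\le C'e^{-r''(2n)^{1/\alpha}}$. The polynomial loss is absorbed by shrinking $r$, and the term-wise expansion converges since all sums are absolutely dominated. This completes the argument without needing $G$ explicitly.
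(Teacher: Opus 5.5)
Your argument is correct, but the proof you actually complete differs from the paper's. The paper follows the route of your first paragraph and imports the step you flag as the obstacle. It takes $\mathcal J g\in\mathcal G_\alpha(\RR_+)$ from \cite[Theorem 4.5]{JPTV} and reads off $|a_n(\mathcal J g)|\le Ce^{-hn^{1/\alpha}}$ from Theorem \ref{glavnabeurling}. It then transfers this decay through Lemma \ref{lm:lager-hermit-connection}. Each Laguerre index $n$ feeds only the Hermite level $|m|=2n$, with weights $0\le c_{n,k}\le\sqrt\pi$, so there is no tail sum and the constant is explicitly $h2^{-1/\alpha}$.

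Your last paragraph avoids $\mathcal J g$ and Laguerre functions altogether. You lift the one-dimensional Hermite expansion $g=\sum_k c_{2k}(g)h_{2k}$ radially. You use that $\Phi h_{2k}$ is a polynomial of degree $2k$ times $e^{-|x|^2/2}$, so it lies in $\operatorname{span}\{h_m:|m|\le 2k\}$. You then bound every coefficient $\langle\Phi h_{2k},h_m\rangle$ by $\|\Phi h_{2k}\|_{L^2(\RR^2)}$, which is polynomial in $k$; for instance $\int_0^\infty h_n(r)^2r\,dr\le\|h_n\|_{L^2(\RR)}\|xh_n\|_{L^2(\RR)}=(n+\tfrac12)^{1/2}$.

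The cost is the tail sum $\sum_{k\ge n}|c_{2k}(g)|k^{C}$ and an arbitrarily small loss in the exponent, which is harmless in both the Roumieu and the Beurling case. The gain is a self-contained proof that needs neither the result from \cite{JPTV} nor the explicit connection formula. It also works verbatim in any dimension $d$; only the polynomial bound on $\int_0^\infty h_{2k}(r)^2r^{d-1}\,dr$ changes. The paper, by contrast, restricts to $d=2$ because the higher-dimensional formula \eqref{lagford} is cumbersome.

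For the write-up, drop the operator reduction of your second paragraph. As a strategy for proving $\mathcal J g\in\mathcal G_\alpha(\RR_+)$, it just reduces to the lemma's own conclusion. Present the direct Hermite argument as the proof, and use the $L^2(\RR^2)$-convergence of $\sum_k c_{2k}(g)\Phi h_{2k}$ to compute the coefficients of $\modu g$ term by term.
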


\begin{proof}
We give the  proof for the Roumieu case, since the proof in the Beurling case is similar. 
We also observe that $ \modu{g} $ is obviously even.

Let
$g\in \mathcal{H}_{\alpha/2,even}(\RR ).$ By \cite[Theorem 4.5]{JPTV} the function $f$ defined by $r\mapsto f(r)=g(\sqrt{r})$, $r\geq 0$, belongs to $ \mathcal G_{\alpha}(\RR_{+}).$ 
From \cite[Theorem 3.3 $(ii)$]{JPTV} it follows that  $f$ admits Laguerre expansion $ \displaystyle
f=\sum_{n=0}^{\infty} a_n(f) \cdot l_n,$ such that
\begin{equation}\label{procena1}
|a_n(f)|\leq C e^{-hn^{1/\alpha}}
\end{equation} 
for some $C,h>0$ (see also Theorem \ref{glavnabeurling} {\em (ii)}). 
Combining Lemma \ref{lm:lager-hermit-connection} with inequality (\ref{procena1}), we conclude  that
$$
\sum_{n=0}^{\infty} a_n(f)(-1)^n l_n (x^2+y^2)=\sum_{n=0}^{\infty} (-1)^n a_n(f)\cdot \left(\sum_{k=0}^n c_{n,k} h_{2k}(x_1) h_{2n-2k}(x_2)\right)
$$
is an absolutely  convergent series. Thus, we can rearrange the order of summation 
to obtain 
$$
\modu{g}(x_1,x_2)=\sum_{m\in \NN_0^2} b_m(\modu{g}) h_{m_1}(x_1)h_{m_2}(x_2), \qquad 
x_1, x_2 \in \RR _+,
$$
where 
 \[
b_m(\modu{g}) =
\begin{cases} 
(-1)^{|m|/2} a_{|m|/2}\cdot  c_{m_1/2,m_2/2}, & \text{if both } m_1 \text{ and } m_2 \text{ are even},\\[2mm]
0, & \text{if } m_1 \text{ or } m_2 \text{ is odd}.
\end{cases}
\]
From the estimate (\ref{procena1}) and Lemma \ref{lm:lager-hermit-connection} 
we get
$$
|b_m(\modu{g})|\leq C \sqrt{\pi} e^{-h \left(\frac{|m|}{2}\right)^{1/\alpha}} 
=
C_1 e^{-h_1 |m|^{1/\alpha}}, \qquad m \in  \NN_0^2,
$$
where $\displaystyle  h_1=\frac{h}{2^{1/\alpha}},$ wherefrom 
$\modu{g}\in \mathcal H_{\alpha/2,even}(\RR^2),$  which proves the Lemma.
\end{proof}

Next we show the opposite direction.

\begin{lem}\label{radial2} 
Let $\alpha>0$ and $\modu{g}\in \mathcal  H_{\alpha/2,radial}(\RR^2)$. Then the function $g$, defined by $g(\pm\sqrt{x_1^2+x_2^2})=\modu{g}(x_1,x_2)$, $(x_1,x_2) \in  \RR^2$, 
belongs to the space 
$ {\mathcal  H}_{\alpha/2,even}(\RR)= {\mathcal  H}_{\alpha/2,radial}(\RR).$ 
\end{lem}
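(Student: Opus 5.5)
The plan is to go through the orthant space $\mathcal G_\alpha(\RR_+)$, reversing the proof of Lemma \ref{radijalna1}. Write $\modu{g}=\sum_{m\in\NN_0^2} b_m h_{m_1}\otimes h_{m_2}$ with $|b_m|\le C e^{-h|m|^{1/\alpha}}$. Set $f(r)=\modu{g}(\sqrt r,0)$ for $r\ge 0$, so that $g(t)=f(t^2)$. By \cite[Theorem 4.5]{JPTV}, applied in the converse direction as in the proof of Lemma \ref{radijalna1}, $g\in\mathcal H_{\alpha/2,even}(\RR)$ holds exactly when $f\in\mathcal G_\alpha(\RR_+)$. By Theorem \ref{glavnabeurling} it is therefore enough to show that the Laguerre coefficients $a_n(f)$ decay like $e^{-h'n^{1/\alpha}}$. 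We first note that $f$ is smooth on $(0,\infty)$, and $f\in L^2(\RR_+)$ because $\modu{g}$ is Schwartz. Hence $a_n(f)=\int_0^\infty f(r)l_n(r)\,dr$ is well defined.

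\textbf{Computing $a_n(f)$.} Pass to polar coordinates in $\RR^2$ with $r=|x|^2$, so $dr=2|x|\,d|x|$ and $dx=|x|\,d|x|\,d\theta$. This gives
\[
a_n(f)=\int_0^\infty \modu{g}(\sqrt r,0)\,l_n(r)\,dr=\frac{1}{\pi}\int_{\RR^2}\modu{g}(x_1,x_2)\,l_n(x_1^2+x_2^2)\,dx_1\,dx_2 .
\]
Insert the expansion from Lemma \ref{lm:lager-hermit-connection}. Since the product is even in each variable, that expansion holds on all of $\RR^2$, and orthonormality of the Hermite functions then gives
\[
a_n(f)=\frac{(-1)^n}{\pi}\sum_{k=0}^n c_{n,k}\, b_{(2k,\,2n-2k)} .
\]
Using $0\le c_{n,k}\le\sqrt\pi$, we get $|a_n(f)|\le \pi^{-1/2}(n+1)\,C e^{-h(2n)^{1/\alpha}}\le C' e^{-h' n^{1/\alpha}}$. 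Hence $\{a_n(f)\}\in\ell_{\alpha/2}(\NN_0)$.

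\textbf{Identifying $f$ with its Laguerre series.} The series $\sum a_n(f) l_n$ converges to $f$ in $L^2$, and both sides are continuous: the series converges uniformly since $|l_n|\le 1$. So $f$ equals the series pointwise and lies in $\mathcal G_\alpha(\RR_+)$. Then $g(t)=f(t^2)$ lies in $\mathcal H_{\alpha/2,even}(\RR)$, and the identity $\mathcal H_{\alpha/2,even}(\RR)=\mathcal H_{\alpha/2,radial}(\RR)$ follows from $O(1)=\{\pm1\}$. The definition of $g$ through $\pm\sqrt{x_1^2+x_2^2}$ is consistent because $\modu{g}$ is radial, by Remark \ref{orthogonal}.

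\textbf{Main obstacle.} The main obstacle is the computation of $a_n(f)$, and in particular justifying the interchange of the finite Hermite sum with the integral. This is harmless, since the sum is finite and everything lies in $L^2$. Should the converse of \cite[Theorem 4.5]{JPTV} not be available, the fallback is a direct argument. Expand $g(t)=f(t^2)=\sum a_n l_n(t^2)$ and use $l_n(t^2)=(-1)^n\sum_k c_{n,k}h_{2k}(t)h_{2n-2k}(0)$, which is Lemma \ref{lm:lager-hermit-connection} with $x_2=0$. Together with $|h_j(0)|\le 1$, this bounds the Hermite coefficients of $g$ by $\sum_{n\ge k}\sqrt\pi\,|a_n|\le C''e^{-h''k^{1/\alpha}}$. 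That is the defining estimate for $\mathcal H_{\alpha/2}(\RR)$, with index $2k$ in place of $k$ absorbed into the constant.
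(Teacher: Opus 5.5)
Your proof is correct, but it takes a different route from the paper's.

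The paper never leaves the Hermite side. It restricts the two-dimensional expansion to the axis, $g(x)=\modu{g}(x,0)=\sum_{m,n}b_{m,n}h_m(x)h_n(0)$, and discards odd indices by evenness. It then combines $|h_n(0)|\le 1$ with the split bound $|b_{m,n}|\le Ce^{-h_1m^{1/\alpha}}e^{-h_1n^{1/\alpha}}$ to read off the Hermite coefficients of $g$ directly. Radiality is used there only to identify $g$ with this restriction. So the estimate in fact shows that axis restrictions of elements of $\mathcal H_{\alpha/2}(\RR^2)$ lie in $\mathcal H_{\alpha/2}(\RR)$.

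You instead use radiality in an essential way (polar coordinates) to obtain $a_n(f)=\frac{(-1)^n}{\pi}\sum_k c_{n,k}b_{(2k,2n-2k)}$. This is the exact inverse of the coefficient formula in the proof of Lemma \ref{radijalna1}; consistently, $\sum_k c_{n,k}^2=\pi$. It gives you the stronger intermediate fact $(\mathcal J\circ\Phi^{-1})\modu{g}\in\mathcal G_\alpha(\RR_+)$ with explicit Laguerre coefficients. That is exactly what the paper later imports from \cite{JPTV} just before Proposition \ref{rot-inv-novo}.

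The cost is that reaching $\mathcal H_{\alpha/2,even}(\RR)$ then needs the inverse of the isomorphism $\mathcal J$. In the paper that inverse comes from \cite[Section 5]{JPTV}; Theorem 4.5 is invoked there only for the direction $g\mapsto g(\sqrt{\cdot})$, so your citation should point to Section 5 instead. Alternatively, your fallback is sound: the rearrangement is justified by $\sum_n (n+1)|a_n|<\infty$. It ultimately rests on the same ingredient as the paper, namely evaluating Hermite functions at $0$ with $|h_j(0)|\le 1$. So the paper's argument is shorter and self-contained, while yours is longer but yields the orthant statement and the explicit coefficient correspondence as a by-product.
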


\begin{proof} The function $\modu{g}$ admits Hermite function expansion
$$
\modu{g}(x_1,x_2)=\sum_{(m,n)\in\NN^2_0} b_{m,n}(\modu{g}) h_m(x_1)h_n(x_2), 
\qquad (x_1,x_2) \in  \RR^2,
$$
such that $\displaystyle |b_{m,n}(\modu{g})|\leq C e^{-h(m+n)^{1/\alpha}}$
(see \cite[Theorem 5.2]{Toft1}).
Since $\modu{g}$ is  radial, 
we infer that 
\begin{equation}\label{eq:g_series_expansion}
g(x)=g(-x)=\modu{g}(x,0)=\sum_{(m,n)\in\NN_0^2} b_{m,n}(\modu{g}) h_m(x) h_n(0), 
\qquad x\in\RR.
\end{equation}
Using the fact that  $\modu{g}$ is also an even function, it follows that $b_{m,n}(\modu{g})=0$ 
whenever either $m$ or $n$ is odd. 

Since the series in \eqref{eq:g_series_expansion}
is absolutely convergent, we rearrange its coefficients to obtain 
$$
g(x)=\sum_{\substack{m=0\\ m\ \text{even}}}\Big(\sum_{\substack{n=0\\ n\ \text{even}}}^{\infty} b_{m,n}(\modu{g}) h_n(0)\Big) h_m(x)
= \sum_{\substack{m=0\\ m\ \text{even}}} a_m(g) h_m(x), 
\qquad x \geq 0.
$$
Since $h_m$ is even when $m$ is even, it follows that 
the previous calculation holds for every $x\in\RR$.

The application of  \cite[(15) p. 193]{Ed}, together with the definition of Hermite functions and the estimate $\displaystyle {n\choose k}\leq 2^n$, when $n\in \NN, 0\leq k\leq n$ give
\begin{equation}\label{hermit0}|h_n(0)|=\frac{1}{2^k \left((2k)!\pi^{1/2}\right)^{1/2}}\cdot \frac{(2k)!}{k!}=\sqrt{\frac{(2k)!}{(k!)^2}}\cdot \frac{1}{\pi^{1/4} 2^k}\leq \frac{\sqrt{2^{2k}}}{\pi^{1/4}2^k}\leq 1,\end{equation}
when $n=2k$.

Let $m,n\in \NN_0$ be even. From  \cite[(3.4)]{SSSB} when $\alpha\geq 1$, or \cite[(15)]{JPTV} when $0<\alpha<1$ we yield 
\begin{equation}\label{amn} |b_{m,n}(\modu{g})|\leq C e^{-h_1 m^{1/\alpha}}\cdot e^{-h_1 n^{1/\alpha}}\end{equation} for some $h_1>0$. 
Finally,  by estimates (\ref{hermit0}) and  (\ref{amn}), we obtain
$$|a_m(g)|\leq C\cdot \left( \sum_{n=0}^{\infty} e^{-h_1 n^{1/\alpha}}  \right) e^{-h_1 m^{1/\alpha}}\leq C_1 e^{-h_1 m^{1/\alpha}} $$
which proves the assertion.
\end{proof}

Careful examination of proofs of Lemmas \ref{radijalna1} and \ref{radial2}
reveals that the corresponding mappings are continuous, hence we obtain the following. 

\begin{prop}\label{Phi} The mapping 
$\Phi$ given by \eqref{eq:Phi} is an isometric isomorphism
between ${\mathcal H}_{\alpha/2,even}(\RR) $ and 
${\mathcal H}_{\alpha/2,radial}(\RR^2)$ (between 
${\mathcal H}_{0,\alpha/2,even}(\RR)$ and $ {\mathcal H}_{0,\alpha/2,radial}(\RR^2)$).
\end{prop}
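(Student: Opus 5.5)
We prove the proposition by following the constructions in Lemmas \ref{radijalna1} and \ref{radial2}. The target is to show that $\Phi$ is a linear bijection between ${\mathcal H}_{\alpha/2,even}(\RR)$ and ${\mathcal H}_{\alpha/2,radial}(\RR^2)$, and that it is continuous in both directions for the inductive (respectively projective) limit topologies. Throughout we work on the Hermite coefficient side, where the topologies are given by the weighted $\ell^\infty$ norms $\sup|b_m|e^{h|m|^{1/\alpha}}$.

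First, algebra. Lemma \ref{radijalna1} shows that $\Phi$ maps ${\mathcal H}_{\alpha/2,even}(\RR)$ into ${\mathcal H}_{\alpha/2,radial}(\RR^2)$. Lemma \ref{radial2} shows that the restriction map $R:\modu g\mapsto \modu g(\cdot,0)$ maps ${\mathcal H}_{\alpha/2,radial}(\RR^2)$ into ${\mathcal H}_{\alpha/2,even}(\RR)$. Pointwise we have $R\Phi g(x)=g(|x|)=g(x)$ for even $g$, and $\Phi R\modu g(x_1,x_2)=\modu g(|x|,0)=\modu g(x_1,x_2)$ by radiality. Hence $\Phi$ is a bijection with inverse $R$, and linearity is clear.

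Second, continuity, handled one step $h$ at a time. In the Roumieu case the proof of Lemma \ref{radijalna1} gives a norm bound
\[
\|\Phi g\|_{h_1}\le C\|g\|_{h},
\]
through the chain $g\mapsto f=g(\sqrt{\cdot})\in\mathcal G_\alpha(\RR_+)$ (continuous by \cite[Theorem 4.5]{JPTV}), then the Laguerre coefficient bound $|a_n(f)|\le C\|f\|e^{-hn^{1/\alpha}}$, then $|b_m|\le\sqrt\pi\,|a_{|m|/2}|$ with $h_1=h2^{-1/\alpha}$. This gives continuity of each map from a step of the inductive limit into the limit space, and therefore continuity of $\Phi$. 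For $R$, the estimate $|a_m(g)|\le C\sum_n e^{-h_1n^{1/\alpha}}\,e^{-h_1m^{1/\alpha}}$ from Lemma \ref{radial2} gives $\|Rg\|_{h_1}\le C'\|\modu g\|_{h}$, provided the constant in \eqref{amn} is linear in the norm of $\modu g$. In the Beurling case the same inequalities hold for every $h$, so continuity in the projective limit topology follows the same way. Since the spaces are (DFN)/(FN), one could alternatively invoke the open mapping theorem once continuity of $\Phi$ and bijectivity are known.

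The main obstacle is making the bound \eqref{amn} quantitative, that is, replacing $e^{-h(m+n)^{1/\alpha}}$ by a product $e^{-h_1m^{1/\alpha}}e^{-h_1n^{1/\alpha}}$ with constants controlled by the norm. For $\alpha\le1$ this follows from $(m+n)^{1/\alpha}\ge m^{1/\alpha}+n^{1/\alpha}$. For $\alpha>1$ we use $(m+n)^{1/\alpha}\ge 2^{1/\alpha-1}(m^{1/\alpha}+n^{1/\alpha})$. Both are elementary, so the estimate holds with $h_1$ depending only on $h$ and $\alpha$. Regarding "isometric": with suitably rescaled norms the coefficient estimates become equalities up to the $c_{n,k}$ factors. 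We would therefore interpret the claim as a topological isomorphism, and verify isometry only if the norms are defined via $\Phi$ itself.
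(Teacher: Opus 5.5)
Your proposal is correct and follows the paper's route: the paper's proof consists of observing that the coefficient estimates in the proofs of Lemmas \ref{radijalna1} and \ref{radial2} make $\Phi$ and its inverse (restriction to the axis) continuous. Your explicit check that $R\Phi$ and $\Phi R$ are identities, and your elementary derivation of \eqref{amn} from the $(m+n)^{1/\alpha}$ inequalities, only fill in details that the paper leaves implicit or cites. Your caveat about ``isometric'' is also justified, since the paper's argument, like yours, yields only a topological isomorphism, and with the natural coefficient (or $L^2$) norms isometry in fact fails, e.g.\ $\Phi h_0=\pi^{1/4}\,h_0\otimes h_0$.
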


\section{The  Laguerre operator and its propagator} \label{sec3}


In this section we introduce the general form of the Laguerre operator propagator, and then study its integral representation.
This reveals the close connection  between the propagator and 
the fractional powers of the Hankel-Clifford transform, see e.g. \cite{SSB}. Then, in Subsection \ref{subsec:continuity-of-propagator} we study its continuity properties in the context of test function spaces on positive orthants and their dual spaces. 
In particular, we obtain Theorem \ref{thm:intro} 
from the introduction as a special case of Theorem \ref{cont. E}.

\par

Let $E$ be the Laguerre operator given by \eqref{eq:LaguerreOperator}, and
$\ds f=\sum_{n\in\NN_0^d} a_n(f) l_n\in \mathcal G'_{0}(\rdp) $. 
Then we define the general form of  the propagator $e^{zE}$,  $z\in\mathbb C$,  as the operator on the space of all formal series  $\mathcal G'_{\alpha}(\rdp)$ as follows:
\begin{equation}\label{Schrodinger propagator}
e^{zE}f=\sum_{n\in\NN^d_0} e^{z|n|} a_n(f)l_n,\quad \mbox{i.e.} \quad  a_n\left(e^{zE}f\right)=e^{z|n|}\cdot  a_n(f), \ n\in\NN_0^d.
\end{equation}
The injectivity of the mapping follows directly from its definition. 

\par

Let  $f\in L^2(\rdp)$ and $\operatorname{Re}(z)\leq 0$. 
The mapping $e^{zE}:L^2(\rdp)\rightarrow L^2(\rdp)$ is well defined since 
$\{a_n(f)\}_{n\in\NN_0^d}\in \ell ^2(\NN_0^d)$ and  $|e^{zn} a_n|\leq |a_n|$, $n\in\NN_0^d$, so that $e^{zE}f\in L^2(\rdp)$. 

In particular, when $\operatorname{Re}(z) = 0$, the operator  $\displaystyle e^{itE}$,  $t \in \RR$, is unitary and  $(e^{itE})^{-1}= e^{-itE}$ on $L^2(\RR^d_+)$. 
Moreover, it is an isometric isomorphism. 

The operator $E$ can therefore be interpreted as the infinitesimal generator of the unitary one-parameter group $(e^{itE})_{t \in \RR}$.

\subsection{The Laguere operator propagator and the fractional power 
of the Hankel-Clifford transform} \label{subsec3.1}

Let us now suppose that $\operatorname{Re}(z)<0$.
Since  $ e^{zE} l_n=e^{z|n|}l_n$, $n\in\NN_0^d$, and $\sum_{n\in\NN_0^d}\|e^{z|n|}l_n\|_{\ell^2}<\infty $, when $\operatorname{Re}(z)<0$, it follows that
the operator $e^{zE}$ is the Hilbert-Schmidt operator \cite[Proposition 16.8] {MV}, and therefore  admits an integral kernel, \cite[Proposition 16.12]{MV}. 

We now make this statement precise.

\begin{lem}  \label{lema:kernel}
Let $z= \operatorname{Re} (z) +i \operatorname{Im} (z) $, $ \operatorname{Re} (z) <0$. 
Then, $e^{zE}$ is an integral operator on  $L^2(\RR^d_+)$ of the form
\begin{equation*} \label{eq:jezgro}
  e^{zE}f(x)=\int_{\RR^d_+}f(y)K_z(x,y)dy, \qquad x \in \overline{\RR^d_+},
\end{equation*}
where 
$$K_z(x,y)=\sum_{k=0}^\infty e^{zk}\sum_{|n|=k}l_n(x)l_n(y), \qquad 
x,y \in \overline{\RR^d_+}.$$
\end{lem}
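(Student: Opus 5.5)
The plan is to define the kernel directly as a series and then check, coefficient by coefficient, that integration against it reproduces the definition \eqref{Schrodinger propagator}.

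\textbf{Convergence of the kernel.} Set
$$
K_z(x,y)=\sum_{n\in\NN_0^d}e^{z|n|}l_n(x)l_n(y).
$$
Grouping the terms with $|n|=k$ gives the stated double sum. Since $\{l_n(x)l_n(y)\}_{n}$ is orthonormal in $L^2(\RR^d_+\times\RR^d_+)$, we get
$$
\|K_z\|_{L^2(\RR^{2d}_+)}^2=\sum_{n}e^{2\operatorname{Re}(z)|n|}=\Big(\sum_{k=0}^\infty e^{2\operatorname{Re}(z)k}\Big)^d<\infty,
$$
because $\operatorname{Re}(z)<0$. Hence the series converges in $L^2(\RR^{2d}_+)$, and the grouping by $|n|=k$ is harmless.

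For pointwise meaning on $\overline{\RR^d_+}$, use the classical bound $|l_m(t)|\le 1$ for $t\ge0$ (Szeg\H{o}: $|L_m(t)|\le e^{t/2}$), which gives $|l_n(x)|\le1$. Then
$$
\sum_n |e^{z|n|}l_n(x)l_n(y)|\le \sum_n e^{\operatorname{Re}(z)|n|}<\infty .
$$
So the series converges absolutely and uniformly on $\overline{\RR^d_+}\times\overline{\RR^d_+}$, and $K_z$ is continuous and bounded there.

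\textbf{Identification of the operator.} For $f\in L^2(\RR^d_+)$ and fixed $x$, the uniform convergence together with $f\in L^2$ and $K_z(x,\cdot)\in L^2$ lets us interchange sum and integral. Cauchy--Schwarz applied termwise, with the bound $\sum_n e^{\operatorname{Re}(z)|n|}\|l_n\|_{L^2}\|f\|_{L^2}<\infty$, gives dominated convergence. Therefore
$$
\int_{\RR^d_+} f(y)K_z(x,y)\,dy=\sum_n e^{z|n|}l_n(x)\int f(y)l_n(y)\,dy=\sum_n e^{z|n|}a_n(f)l_n(x).
$$
Here the Laguerre functions are real, so no conjugation issue arises. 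The right-hand side is exactly $e^{zE}f$ from \eqref{Schrodinger propagator}.

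The series converges in $L^2$ since its coefficients lie in $\ell^2$. It also converges absolutely pointwise, since $|a_n(f)|\le\|f\|_{L^2}$ and $|l_n|\le 1$. The two limits therefore agree a.e., and in fact everywhere, by taking the continuous representative. This gives the formula for every $x\in\overline{\RR^d_+}$.

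\textbf{Main obstacle.} The delicate step is justifying the pointwise formula for all $x$, including boundary points, rather than only a.e. That is precisely where the uniform bound $|l_n|\le1$ on $\overline{\RR_+}$ is needed. If one prefers to avoid this bound, the fallback is the abstract Hilbert--Schmidt argument in \cite[Proposition 16.12]{MV}: it yields the kernel in $L^2$ and the identity a.e. Continuity of both sides would then be needed to upgrade to all $x$, which again essentially requires a bound on $\sup|l_n|$. A polynomial bound in $n$ would suffice, given the exponential decay of $e^{\operatorname{Re}(z)|n|}$.
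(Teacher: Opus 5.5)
Your proof is correct and follows the paper's route: substitute $a_n(f)=\int_{\RR^d_+} f(y)l_n(y)\,dy$ into the series defining $e^{zE}f$, then interchange summation and integration. Your justification of that interchange is more explicit than the paper's bare appeal to absolute convergence. It combines the bound $|l_n|\le 1$ on $\overline{\RR^d_+}$ with $\sum_n e^{\operatorname{Re}(z)|n|}\|f\|_{L^2}\|l_n\|_{L^2}<\infty$ (Fubini for sums), and this also settles the pointwise validity for every $x\in\overline{\RR^d_+}$.
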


\begin{proof}
By \eqref{Schrodinger propagator} it follows that
\begin{eqnarray}\label{expE}
e^{zE}f(x) & = & \sum_{k=0}^\infty e^{zk}\sum_{|n|=k}a_n(f)l_n(x)   \nonumber\\
& = & \sum_{k=0}^\infty e^{zk}\sum_{|n|=k}\int_{\RR^d_+}f(y)l_n(x)l_n(y)dy  \nonumber \\
& = & \int_{\RR^d_+}f(y)
\Big ( \sum_{k=0}^\infty e^{zk}\sum_{|n|=k}l_n(x)l_n(y) \Big )
dy
\nonumber \\
& = & 
\int_{\RR^d_+}f(y)K_z(x,y)dy, \qquad x \in \overline{\RR^d_+},
\end{eqnarray}
and the interchange of summation and integration is justified because the last integral converges absolutely in $L^2(\RR^d_+)$.
\end{proof}

To extend Lemma \ref{lema:kernel} to the integral representation of  $e^{zE}$ when 
$ \operatorname{Re} (z) = 0$, we rewrite the kernel in terms of a modified Bessel function.
We follow the main ideas of the proof of \cite[Lemma 3.2]{Sohani}, 
making suitable modifications where necessary. 
First, observe that
\begin{eqnarray}\label{redovi}
K_z(x,y) & = & \sum_{k=0}^\infty e^{zk}\sum_{|n|=k}\prod_{j=1}^dl_{n_j}(x_j)l_{n_j}(y_j)
\nonumber \\
& = & \prod_{j=1}^d\left(\sum_{n_j=0}^\infty l_{n_j}(x_j)l_{n_j}(y_j)e^{zn_j}\right), 
\end{eqnarray}
$n = (n_1, \dots, n_d) \in \NN_0 ^d,$ 
$x = (x_1, \dots, x_d),$ 
$y = (y_1, \dots, y_d)$ $ \in \overline{\RR^d_+}$,
since the series in \eqref{redovi} is absolutely convergent.

From $ \operatorname{Re} (z)<0$, it follows that $|e^{z}| <1$, 
and we can use the Hille--Hardy formula (see \cite[(20) p. 189]{Ed})
to write the $1-$dimensional counterpart of \eqref{redovi} as follows: 
\begin{equation} \label{eq:bessel}
\sum_{n=0}^\infty l_{n}(x)l_{n}(y)e^{zn}=(1-e^{z})^{-1}e^{-\frac{1}{2}\cdot \frac{(1+e^{z})(x+y)}{1-e^{z}}} I_0\Big(2\frac{(x y e^{z})^{\frac{1}{2}}}{1-e^{z}}\Big),
\quad x,y\in \overline{\RR_+},
\end{equation}
where $I_0$ denotes the modified Bessel function of the first kind and the order zero, i.e.
$$
I_0 (\omega) = \sum_{n=0}^\infty \frac{\left ( \frac{\omega}{2}\right )^{2n}}{(n!)^2}, \qquad \omega \in \CC,
$$
cf. \cite[Chapter VII]{Ed}. Next we apply \eqref{eq:bessel} $d$ times, 
for each respective series on the right hand side of \eqref{redovi}, and conclude that
\begin{equation} \label{Kernel_z}
K_z(x,y)  =  (1-e^{z})^{-d}  \prod_{j=1}^d e^{-\frac{1}{2}\cdot \frac{1+e^{z}}{1-e^{z}}
(x_j+y_j)}
\cdot I_0\Big(2\frac{(x_jy_je^{z})^{\frac{1}{2}}}{1-e^{z}}\Big), 
\end{equation}
$ x = (x_1,  \dots, x_d)$, 
$y = (y_1, \dots, y_d) \in \overline{\RR^d_+}$.

We now modify the result of Lemma \ref{lema:kernel}  for the case
$ \operatorname{Re} (z)= 0$.

\begin{lem} \label{HC}
Let $z = i t $, $ t \in  \RR \setminus \{ 0\} $, and  $f \in \SSS (\RR^d_{+})$.
Then,  $ \displaystyle e^{zE}f(x)=\int_{\RR^d_+}f(y)K_z(x,y)dy $, 
$x \in \overline{\RR^d_+}$, where $K_z$ is given by \eqref{Kernel_z}.
\end{lem}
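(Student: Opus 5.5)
The plan is an Abel-type limit argument: approximate $z=it$ by $z_\varepsilon=-\varepsilon+it$, $\varepsilon>0$, where Lemma \ref{lema:kernel} together with \eqref{Kernel_z} already gives
$$
e^{z_\varepsilon E}f(x)=\int_{\RR^d_+}f(y)K_{z_\varepsilon}(x,y)\,dy ,
$$
and then let $\varepsilon\to0^+$ on both sides. Note that $t\neq 0$ with $t\notin 2\pi\ZZ$ is needed so that $1-e^{it}\neq0$. For $t\in 2\pi\ZZ\setminus\{0\}$ the operator $e^{itE}$ is the identity and the kernel degenerates to a delta, so I would either treat $t$ modulo $2\pi$ or regard the formula there as a limit. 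I will assume $e^{it}\neq1$.

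\emph{Left-hand side.} Since $f\in\SSS(\RR^d_+)$, the coefficients satisfy $\{a_n(f)\}\in s(\NN_0^d)$. Moreover $\sup_x|l_n(x)|\le 1$, because $|l_n^0(x)|\le1$ on $\overline{\RR_+}$. Hence
$$
\sum_n e^{z_\varepsilon|n|}a_n(f)l_n(x)\;\longrightarrow\;\sum_n e^{it|n|}a_n(f)l_n(x)=e^{itE}f(x)
$$
uniformly in $x\in\overline{\RR^d_+}$, by dominated convergence applied to the sum with dominating sequence $|a_n(f)|$.

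\emph{Right-hand side.} I must show that $\int f(y)K_{z_\varepsilon}(x,y)\,dy\to\int f(y)K_{it}(x,y)\,dy$ for each fixed $x$. The kernel factorizes into one-dimensional factors, and $w_\varepsilon=e^{z_\varepsilon}\to w=e^{it}$ with $|w|=1$ and $w\neq1$. Write
$$
\frac{1+w}{1-w}=i\cot(t/2),
$$
which is purely imaginary. The exponential factor in \eqref{Kernel_z} then has modulus
$$
\exp\Bigl(-\tfrac12\operatorname{Re}\tfrac{1+w_\varepsilon}{1-w_\varepsilon}(x_j+y_j)\Bigr).
$$
Here $\operatorname{Re}\frac{1+w_\varepsilon}{1-w_\varepsilon}=\frac{1-|w_\varepsilon|^2}{|1-w_\varepsilon|^2}\ge 0$, so this modulus is at most $1$ uniformly in $\varepsilon$. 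For the Bessel factor I use $|I_0(\omega)|\le I_0(|\omega|)\le e^{|\omega|}$. Since $|1-w_\varepsilon|$ stays bounded below by some $c>0$ for small $\varepsilon$, this gives the bound $e^{2\sqrt{x_jy_j}/c}$. For fixed $x$, the full integrand is therefore dominated by
$$
C\,|f(y)|\prod_j e^{2\sqrt{x_jy_j}/c},
$$
which is integrable only if $f$ decays faster than $e^{-C'\sqrt{|y|}}$.

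\emph{Main obstacle.} For $f\in\SSS(\RR^d_+)$ the needed decay is not automatic, since Schwartz functions decay only polynomially. This domination step is the crux. I would try to handle it in two stages. First, prove the identity for $f$ a finite linear combination of Laguerre functions $l_n$; these decay like $e^{-|y|/2}$, so dominated convergence applies directly. Second, extend to general $f\in\SSS(\RR^d_+)$ by density in $L^2(\RR^d_+)$. For this I note that, for fixed $x$ and $t$, the kernel $y\mapsto K_{it}(x,y)$ is bounded up to polynomial growth. I would check this with the asymptotics
$$
I_0(i s)=J_0(s)\quad\text{or}\quad |I_0(\omega)|\lesssim |\omega|^{-1/2}e^{|\operatorname{Re}\omega|}.
$$
Here $\sqrt{w}/(1-w)=1/(-2i\sin(t/2))$ is purely imaginary, so the argument of $I_0$ is purely imaginary and $I_0$ reduces to a bounded $J_0$.

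With this bound, $K_{it}(x,\cdot)$ is bounded in $y$, so $f\mapsto\int fK_{it}(x,\cdot)\,dy$ is continuous with respect to a weighted $L^1$ seminorm. This seminorm is controlled by the $\SSS(\RR^d_+)$ seminorms. The map $f\mapsto e^{itE}f(x)$ is likewise continuous on $\SSS(\RR^d_+)$, being controlled by $\sum_n|a_n(f)|$. Both sides are thus continuous on $\SSS(\RR^d_+)$ and agree on its dense subspace $\mathcal G_0(\RR^d_+)$, so they agree everywhere. Rewriting $I_0$ as the bounded Bessel function $J_0$ is the key observation that makes the whole argument work, and it even shows that the $\varepsilon$-limit step can be done directly with a bounded dominating function.
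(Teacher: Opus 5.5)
Your proof is correct for $e^{it}\neq1$, a restriction that the statement and the paper's proof also need tacitly, since \eqref{Kernel_z} is undefined when $e^{z}=1$. Its skeleton is the paper's: approximation by $z_r=\ln r+it$, Abel's theorem on the Laguerre side, and boundedness of the $I_0$-factors at $r=1$ (\eqref{eq:finite}, i.e.\ your $J_0$ observation) together with $\operatorname{Re}\frac{1+e^{it}}{1-e^{it}}=0$.

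The difference is in how you interchange $\lim_{r\to1^-}$ with $\int_{\RR^d_+}$. The paper simply asserts \eqref{pomocni3} for arbitrary $f\in\SSS(\RR^d_+)$. You notice that boundedness of $K_{it}$ does not by itself dominate $K_{z_\varepsilon}$. So you apply dominated convergence only on finite Laguerre sums, where $e^{-y_j/2}$ beats $e^{C\sqrt{x_jy_j}}$, and then extend by density of $\mathcal G_0(\RR^d_+)$ and continuity of both sides on $\SSS(\RR^d_+)$. That extra step supplies exactly the justification the paper leaves out.

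Your closing claim, that the direct limit works with a bounded dominating function, is true. It does not follow from the $J_0$ observation, however, which holds only at $\varepsilon=0$. Write $e^{z_\varepsilon}=q^2e^{it}$ with $q=e^{-\varepsilon/2}$ and $D=|1-e^{z_\varepsilon}|^2$. Then the Bessel argument has real part of modulus $2q(1-q^2)|\cos(t/2)|\sqrt{x_jy_j}/D$. To absorb this you need two facts:
\begin{itemize}
\item $|I_0(\omega)|\le e^{|\operatorname{Re}\omega|}$, which follows from $I_0(\omega)=\frac1\pi\int_0^\pi e^{\omega\cos\theta}\,d\theta$;
\item $x_j+y_j\ge2\sqrt{x_jy_j}$, applied to the exponential factor of modulus $\exp(-(1-q^4)(x_j+y_j)/(2D))$.
\end{itemize}
The net exponent is then at most $(1-q^2)\sqrt{x_jy_j}\,\bigl(2q|\cos(t/2)|-1-q^2\bigr)/D\le0$. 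Hence $|K_{z_\varepsilon}(x,y)|\le|1-e^{z_\varepsilon}|^{-d}$ uniformly in $x,y$ and small $\varepsilon$. With this bound the paper's one-step passage to the limit becomes rigorous for all $f\in L^1(\RR^d_+)$, and your density argument would no longer be needed.
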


\begin{proof}
The series on the right-hand side of (\ref{Schrodinger propagator}), namely $\ds \sum_{n\in\NN_0^d} e^{it |n|} a_n(f) l_n $ is absolutely convergent, hence $e^{it E}f$ is well  defined. 

Consider now $z_r=\ln r+it$, $r\in (0,1)$. Then $e^{z_r}=re^{it}$ and Abel's theorem for power series yields:

\begin{equation}\label{pomocni1}
\lim_{r\to 1^{-}} e^{z_r E}f  =\lim_{r\to 1^{-}}  \sum_{n\in\NN_0^d} a_n(f)r^{|n|}e^{it |n|} l_n =\sum_{n\in\NN_0^d} a_n(f)e^{it |n|}  l_n=e^{it E}f. 
\end{equation}

Since $|e^{z_r}|=r<1$,  formula $(\ref{expE})$ holds for every $z_r$, i.e.
\begin{equation}\label{pomocni2}
\ds e^{z_r E} f (x)=\int_{\RR^d_{+}} f(y) K_{z_r} (x,y)dy, \qquad x \in \overline{\RR^d_+}. 
\end{equation}

From \cite[(4.2)]{SSB}  it follows that 
\begin{equation} \label{eq:finite}
\sup_{x,y\in\RR_{+}^d}\prod_{j=1}^d I_0\Big(2\frac{\sqrt{x_jy_je^{it}}}{1-e^{it}}\Big)<\infty. 
\end{equation}
On the other hand, it is easy to verify that $\ds \operatorname{Re}\left( \frac{1+e^{it}}{1-e^{it}}\right)=0$, which, in combination with \eqref{eq:finite} 
proves that  $\ds \int_{\RR^d_{+}} f(y) K_{it} (x,y)dy$ converges absolutely,
where
$K_{it}$ is given by \eqref{Kernel_z}.
Moreover,  
\begin{equation}\label{pomocni3}
\lim_{r\to 1^{-}}\int_{\RR^d_{+}} f(y) K_{z_r}(x,y)dy =\int_{\RR^d_{+}} f(y) K_{it} (x,y)dy.
\end{equation}
Combining (\ref{pomocni1}), (\ref{pomocni2}) and (\ref{pomocni3}) we conclude that 
$$
e^{it E}f=\int_{\RR^d_{+}} f(y) K_{it} (\cdot ,y)dy.$$
\end{proof}

Using Lemma \ref{HC} we can show that the propagator 
$e^{it E}$ is essentially the same as the fractional power of the Hankel-Clifford transform.
Recall, the fractional power of the Hankel-Clifford transform of $f\in\SSS(\RR^d_{+})$ is defined by 
\begin{equation} \label{eq:frac-pow-H-C}
    \mathcal{I}_{\mathbf{z},0}f(x)= (1-z)^{-d} 
\int_{\rdp}f(x) \prod_{j=1}^d e^{-\frac{1}{2}\cdot \frac{1+z}{1-z}(x_j + y_j)}   
I_0\left(2\frac{(x_jy_j z)^{\frac{1}{2}}}{1-z}\right)dy, 
\end{equation}
see \cite[Section 4]{SSB}. 
By Lemma \ref{HC} and definitions of $e^{it E}$  and $\mathcal{I}_{\mathbf{z},0}$,
it follows  that 
\begin{equation} \label{eq:link-za-henkel}
    e^{it E}f=\mathcal{I}_{\mathbf{z},0}f,
\end{equation}
where $z=e^{it},
t\in\RR\setminus\{0\}$, and $\mathbf{z}=(z,\dots,z) \in\CC^d$. 

More generaly, if $\mathbf{z} =(e^{it_1},\dots, e^{it_d})\in\CC^d$, and $ (t_1,t_2,\dots, t_d)\in\mathbb R^d\setminus\{(0,\dots,0)\}$, then we  have
$$
\left(\prod_{j=1}^d e^{it_j E_j}\right)f
= \mathcal{I}_{\mathbf{z},0}f,
$$
where $E_j$ is the Laguerre operator with respect to the variable $x_j$, $ j =1,\dots, d$.
For example, if  $ f \in \SSS(\RR_{+} ^d) \cong\bigotimes\limits_{j=1}^d \SSS(\RR_{+})$, then we get
\begin{equation}\label{tenzorhc}
\mathcal{I}_{\mathbf{z},0}f =\left(\bigotimes_{j=1}^d e^{it_jE_j}\right)f,
\end{equation}

Notice that, when $z=-1$ and $\mathbf{z}= \mathbf{-1}= (-1,\dots,-1) \in\CC^d$  we get
$ e^{i\pi  E} = \mathcal{I}_{\mathbf{-1},0}$,  where 
$$ 
\mathcal{I}_{\mathbf{-1},0}f(x)=\frac{1}{2^d}  \int_{\RR^d_{+}} f(x)\prod_{l=1}^d I_0(\sqrt{x_lt_j})dx  $$
is the $d-$dimensional Hankel-Clifford transform,
see e.g. \cite[p. 9]{SSB}.

\subsection{Continuity properties of the Laguerre operator propagator}
\label{subsec:continuity-of-propagator}

Next we discuss continuity properties of the propagator $e^{zE}$
given by \eqref{Schrodinger propagator}, by studying more general propagators.  
Let  $E_{c}=E+ c$, $c\in\CC$, so that
$$
E_{c}f = \sum_{n\in\NN_0^d}(|n|+c)a_n(f)l_n,
$$
and
\begin{equation}\label{propagator powers of E}
e^{zE_{c}} f = \sum_{n\in\NN_0^d}e^{z(|n|+c)}a_n(f)l_n ,\quad z\in\CC\setminus\{0\},
\qquad 
f\in \mathcal{G}'_0(\RR^d_+).
\end{equation}

\begin{lem} \label{lem:isomorphism_S}
Let there be given $t\in \RR,c\in\CC$. Then the mapping  $e^{itE_c}:\SSS(\mathbb{R}_+^d) \rightarrow \SSS(\mathbb{R}_+^d)$ given by
\eqref{propagator powers of E} is a topological isomorphism. The same is true for  $e^{itE_c}:\SSS'(\mathbb{R}_+^d) \rightarrow \SSS '(\mathbb{R}_+^d)$.
\end{lem}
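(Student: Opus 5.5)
The plan is to transfer the problem to sequence spaces through the Laguerre coefficient map. By \cite[Theorem 3.1]{Sm}, recalled above, $f\mapsto\{a_n(f)\}_{n\in\NN_0^d}$ is a bijection $\SSS(\RR^d_+)\to s(\NN_0^d)$ and $\SSS'(\RR^d_+)\to s'(\NN_0^d)$, with inverse given by the (convergent) Laguerre series. I will take the topology of $\SSS(\RR^d_+)$ as the one induced from $s(\NN_0^d)$ via the seminorms $\|\{a_n(f)\}\|_{s_N}$, $N\in\NN_0$. This is justified by the open mapping theorem for Fréchet spaces, since the coefficient map is continuous ($|a_n(f)|\langle n\rangle^N$ is controlled by $\|E^{N'}f\|_{L^2}$, itself bounded by finitely many $\mu_{\alpha,\beta}$). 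For $\SSS'(\RR^d_+)$ I will use the identification with $s'(\NN_0^d)=\bigcup_N s_{-N}$ and the norms $\|\cdot\|_{\SSS'_N}$. In these coordinates, by \eqref{propagator powers of E}, $e^{itE_c}$ is the diagonal multiplier
\[
\{a_n\}_{n\in\NN_0^d}\mapsto\{e^{it(|n|+c)}a_n\}_{n\in\NN_0^d}.
\]

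The key observation is that $|e^{it(|n|+c)}|=e^{-t\operatorname{Im}c}$, a constant independent of $n$. Writing $\kappa=e^{-t\operatorname{Im}c}>0$, we get
\[
\|e^{itE_c}f\|_{s_N}=\kappa\,\|f\|_{s_N}\qquad\text{for every } N\in\ZZ.
\]
Hence $e^{itE_c}$ maps each $s_N(\NN_0^d)$ onto itself boundedly. Consequently it maps $s$ to $s$ continuously, and also $s'$ to $s'$, since it is continuous on each step $s_{-N}$ of the inductive limit.

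For invertibility, the candidate inverse is $e^{-itE_{c}}$, which is the same kind of multiplier, with symbol $e^{-it(|n|+c)}$ and constant modulus $\kappa^{-1}$. Composing the two multipliers gives the identity on coefficient sequences, and therefore on $\SSS$ and $\SSS'$. Continuity of the inverse follows by the same estimate, so both maps are topological isomorphisms. As a consistency check on $\SSS'$, $e^{itE_c}$ is the transpose of the multiplier on $\SSS$ with the same symbol, because $\langle e^{itE_c}f,\varphi\rangle=\sum e^{it(|n|+c)}a_n(f)a_n(\varphi)$. This also gives weak$^*$ and strong continuity directly.

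The main obstacle is the topological identification of $\SSS(\RR^d_+)$ with $s(\NN_0^d)$, as opposed to the merely algebraic one. I would handle it by invoking \cite[Theorem 3.1]{Sm} together with the open mapping theorem. Once that is in place, the rest is the trivial diagonal estimate above; the case $t=0$ is just multiplication by the constant $e^{0}=1$.
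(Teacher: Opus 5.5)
Your proposal is correct and follows the paper's own route. You identify $\SSS(\RR^d_+)$ and $\SSS'(\RR^d_+)$ with $s(\NN^d_0)$ and $s'(\NN^d_0)$ via \cite[Theorem 3.1]{Sm}, observe that $e^{itE_c}$ acts as the diagonal multiplier $e^{it(|n|+c)}$ of constant modulus $e^{-t\operatorname{Im}c}$, and take $e^{-itE_c}$ as the inverse; you simply spell out the topological identification and the elementary multiplier estimate that the paper treats as obvious.
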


\begin{proof} 
Let $f\in \SSS(\mathbb{R}_+^d)$. By \cite[Theorem 3.1]{Sm} it follows that 
$\SSS(\mathbb{R}_+^d) $ is isomorphic to  the sequence space
$s(\NN^d_0)$ given by \eqref{eq:sequences-s-and-s'}, wherefrom
$\{a_n(f)\}_{n\in\NN_0^d}\in s(\NN^d_0)$. 
Since $a_n\left(e^{itE_c} f\right)=e^{it(|n|+c)} a_n(f)$, it is obvious that $a_n\left(e^{itE} f\right)\in s(\NN^d_0)$, and therefore  $e^{itE_c}f\in  \SSS(\mathbb{R}_+^d)$ along with the continuity of the mapping. The similar calculation holds for the case of $\SSS'(\mathbb{R}_+^d)$. 
 
Note that $(e^{itE_c})^{-1}=e^{-itE_c}$ on these spaces. 
\end{proof}

Our next result can be viewed as a positive orthants counterpart to \cite[Propositions 0.2 and 0.3]{TBM}.

\par 

\begin{thm} \label{thm:cont-vs-discont}
Let  $e^{zE_c} $,$z\in\CC\setminus \{ 0\}$, $c\in \CC$, be given by
\eqref{propagator powers of E}, and let $\mathcal G_1 (\RR_{+}^d)$ and 
$\mathcal G_1 '(\RR_{+}^d)$ be given by \eqref{eq:G-sequence-space}.
\begin{itemize}
\item[$(i)$] 
If $\operatorname{Re}(z)<0$ then the mapping $e^{zE_c}$ is continuous injection, but not surjection from $\mathcal G_1 '(\RR_{+}^d)$ to $\mathcal G_1 (\RR_{+}^d)$.
\item[$(ii)$]
If $\operatorname{Re}(z) > 0$, then the mapping $e^{zE_c}$  is not continuous   from 
$\mathcal G_1 (\RR_{+}^d)$ to $\mathcal G_1 '(\RR_{+}^d)$.
\end{itemize}
\end{thm}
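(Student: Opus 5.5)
Everything reduces to the sequence-space level through the coefficient map $f\mapsto\{a_n(f)\}_{n\in\NN_0^d}$. By \eqref{eq:G-sequence-space} and the case $\alpha=1$ of Theorem \ref{glavnabeurling}, the space $\mathcal G_1(\RR^d_+)$ is identified, algebraically and topologically, with $\bigcup_{h>0}\{a: \sup_n|a_n|e^{h|n|}<\infty\}$. Likewise $\mathcal G_1'(\RR^d_+)$ is identified with $\bigcap_{h>0}\{a:\sup_n|a_n|e^{-h|n|}<\infty\}$. By \eqref{propagator powers of E}, $e^{zE_c}$ acts as the diagonal multiplier $a_n\mapsto e^{zc}e^{z|n|}a_n$. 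The factor $e^{zc}\neq0$ is a harmless constant, and $|e^{z|n|}|=e^{\operatorname{Re}(z)|n|}$.

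For $(i)$, put $\delta=-\operatorname{Re}(z)>0$ and let $f\in\mathcal G_1'(\RR^d_+)$. Then $f\in\mathcal G'_{1,\delta/2}$, so
\[
|a_n(e^{zE_c}f)|e^{(\delta/2)|n|}\le |e^{zc}|\,|a_n(f)|e^{-(\delta/2)|n|}.
\]
This shows $\|e^{zE_c}f\|_{\mathcal G_{1,\delta/2}}\le |e^{zc}|\,\|f\|_{\mathcal G'_{1,\delta/2}}$. Hence $e^{zE_c}$ factors as the continuous inclusion $\mathcal G_1'\hookrightarrow\mathcal G'_{1,\delta/2}$, followed by a bounded map into the Banach space $\mathcal G_{1,\delta/2}$, followed by the continuous inclusion $\mathcal G_{1,\delta/2}\hookrightarrow\mathcal G_1$. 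It is therefore continuous. Injectivity holds because the multiplier never vanishes.

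For non-surjectivity in $(i)$, take $g$ with $a_n(g)=e^{-\varepsilon|n|}$, where $0<\varepsilon<\delta$. Then $g\in\mathcal G_{1,\varepsilon}\subset\mathcal G_1$. Any preimage $f$ must have coefficients $a_n(f)=e^{-zc}e^{-z|n|}e^{-\varepsilon|n|}$, so $|a_n(f)|=|e^{-zc}|e^{(\delta-\varepsilon)|n|}$. This sequence is not in $\mathcal G'_{1,h}$ for any $h<\delta-\varepsilon$, so $f\notin\mathcal G_1'$.

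For $(ii)$, put $\delta=\operatorname{Re}(z)>0$. Here the map is not even well defined: there are elements of $\mathcal G_1(\RR^d_+)$ whose image lies outside $\mathcal G_1'(\RR^d_+)$, and a map that does not send $\mathcal G_1$ into $\mathcal G_1'$ cannot be continuous between these spaces. Take $a_n(f)=e^{-\varepsilon|n|}$ with $0<\varepsilon<\delta$, so that $f\in\mathcal G_1$. Then $|a_n(e^{zE_c}f)|=|e^{zc}|e^{(\delta-\varepsilon)|n|}$. Exactly as above, this forces $e^{zE_c}f\notin\mathcal G'_{1,h}$ for small $h$, hence $e^{zE_c}f\notin\mathcal G_1'$.

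If one prefers to read $(ii)$ as a statement about the maps restricted to those $f$ whose image does land in $\mathcal G_1'$, there is an alternative. Choose the sequence $f_k=e^{-k}\,l_{(k,0,\dots,0)}$, which tends to $0$ in the Banach step $\mathcal G_{1,h_0}$ for any $h_0<1$. Then $\|e^{zE_c}f_k\|_{\mathcal G'_{1,h}}=|e^{zc}|e^{(\delta-1-h)k}$, and for suitable scaling of the $f_k$ these norms do not tend to $0$. The main point to handle carefully is the topological identification from Theorem \ref{glavnabeurling}; I expect the rest to be routine.
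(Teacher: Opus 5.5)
Your proposal is correct and takes essentially the paper's route. You treat $e^{zE_c}$ as the diagonal multiplier $a_n\mapsto e^{z(|n|+c)}a_n$ on Laguerre coefficients and use an exponent below $-\operatorname{Re}(z)$ for continuity in $(i)$; your factorization $\mathcal G_1'\hookrightarrow\mathcal G'_{1,\delta/2}\to\mathcal G_{1,\delta/2}\hookrightarrow\mathcal G_1$ makes this step more explicit than the paper does. In $(ii)$ you take $a_n(f)=e^{-\varepsilon|n|}$ with $0<\varepsilon<\operatorname{Re}(z)$, and you get non-surjectivity from the same construction applied to $-z$, which the paper expresses through $e^{zE_c}e^{-zE_c}=\mathrm{Id}$ and injectivity.

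Only your optional last paragraph needs a fix. With $f_k=e^{-k}l_{(k,0,\dots,0)}$ the norms $|e^{zc}|e^{(\operatorname{Re}(z)-1-h)k}$ tend to $0$ for every $h>0$ when $\operatorname{Re}(z)\le 1$. Take instead $f_k=e^{-\varepsilon k}l_{(k,0,\dots,0)}$ with $0<h_0<\varepsilon<\operatorname{Re}(z)$ and $0<h\le\operatorname{Re}(z)-\varepsilon$.
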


\begin{proof} {\em (i)} The injectivity of  $e^{zE_c} $ is straightforward. 
Let $z\in\CC \setminus \{ 0\}$ be such that  $\operatorname{Re}(z)<0$, and 
let $f\in \mathcal G_{1}'(\RR^d)$. 
By Theorem \ref{glavnabeurling} {\em (iv)} it follows that for every $h>0$ there exists $C_h>0$ such that $|a_n(f)|\leq C_h e^{h|n|}$. Therefore,
$$
|a_n\left(e^{zE_c}(f)\right)|=e^{\operatorname{Re}(z)|n|+ \operatorname{Re}(zc)}\cdot |a_n(f)|\leq C_h e^{h|n|+\operatorname{Re}(z)|n|+ \operatorname{Re}(zc)}.
$$
A choice of $h \in (0,-\operatorname{Re}(z))$, together with 
Theorem \ref{glavnabeurling} {\em (ii)} shows the continuity of $e^{zE_c} $.

{\em (ii)} Let $f\in  \mathcal G_{1} (\RR^d)$. Then, by Theorem \ref{glavnabeurling} {\em (ii)} it follows that  $|a_n(f)|\leq C e^{h|n|}$ for some $C,h>0,$ and 
$$
|a_n\left(e^{zE_c}(f)\right)|=e^{\operatorname{Re}(z)|n|+ \operatorname{Re}(zc)+h|n|}= e^{\operatorname{Re }(zc)} \cdot  e^{(\operatorname{Re}(z)+h)|n|}.
$$
Therefore, if $z\in\CC \setminus \{ 0\}$ is such that  $\operatorname{Re}(z)>0$, it  follows that  $e^{zE_c}(f)\not\in \mathcal G'_1(\RR^d_{+})$.

It remains to comment on the lack of surjectivity when $\operatorname{Re}(z)<0$. Since   
$$
e^{zE_c}e^{-zE_c}=Id,\qquad \mbox{on} \qquad\mathcal G'_0(\RR^d_+),
$$
the claim follows from the injectivity of $e^{zE_c}$ and  the proof of  part 
$(ii)$ of this theorem. 
\end{proof}

To extended Lemma \ref{lem:isomorphism_S} and Theorem
\ref{thm:cont-vs-discont} to
$P-$spaces on positive orthants, $\mathcal G_{\alpha}(\RR^d_+)$ and $\mathcal G_{0,\alpha}(\RR^d_+)$, $\alpha > 0$, and their dual  spaces, 
we consider the following operator
\begin{equation}\label{E^r}
E^r_c(f)= \sum_{n\in\NN_0^d}(|n| +c)^r a_n(f)l_n, \qquad r\in \RR, c\in\CC\setminus\{0\},
\end{equation}
with $-c\not\in \NN_0 $  when $r<0$. Here, the power  $c^r$,
$c\in\CC\setminus\{0\}$, 
is understood as the principal branch of the power function, and $0^r=0$ when $r>0$.

Moreover, for $z\in\CC\setminus\{0\}$ we also define the propagator  $e^{zE^r_c}$ by 
\begin{equation}\label{eE^r}
e^{z E^r_c}(f)= \sum_{n\in\NN_0^d}e^{z(|n|+c)^r} a_n(f)l_n, 
\qquad
f \in \mathcal{G}'_0(\RR^d_+).
\end{equation}

In the next Theorem we consider continuity properties of $E^r_c$ and $e^{z E^r_c}$.

\begin{thm}\label{cont. E}
(i) Let $ c \in\CC $, $r> 0$, and $\alpha >0$.
The mapping $E^r_c$ on $\mathcal{G}'_0(\RR^d_+)$  given by (\ref{E^r}) 
restricts to a continuous mapping on  
\begin{equation} \label{eq:G-type-spaces}
\mathcal G_{0,\alpha}(\RR^d_+),\quad \mathcal G_{\alpha}(\RR^d_+),\quad \mathcal G'_{\alpha}(\RR^d_+)\quad\mbox{and}\quad\mathcal G'_{0,\alpha}(\RR^d_+).    
\end{equation}
The same is true when $r<0$,  and  $ c \in \CC $ is such that $-c\not\in \NN_0 $.

Thus, the mapping $E^r_c$ is a topological isomorphism on 
the spaces in \eqref{eq:G-type-spaces} when 
$r > 0$ and  $ c \in \CC $, and $ -c\not\in \NN_0 $ when $r < 0$. 

(ii) Let there be given  $ z \in\CC \setminus \{ 0\}$, 
$ c \in \CC$, and $r> 0$. If $0<\alpha_1 \leq 1/r$ and $ 0<\alpha_2 < 1/r$,
then the mapping $e^{z E^r_c}$ on $\mathcal{G}'_0(\RR^d_+)$ 
given by  (\ref{eE^r})
restricts to topological isomorphism on  
$$\mathcal G_{0,\alpha_1}(\RR^d_+),\quad \mathcal G_{\alpha_2}(\RR^d_+),\quad \mathcal G'_{\alpha_2}(\RR^d_+)\quad\mbox{and}\quad\mathcal G'_{0,\alpha_1}(\RR^d_+).$$

If, in addition, $\operatorname{Re} z = 0$, then the mapping  $e^{z E^r_c}$
is  a topological isomorphism on $\SSS(\mathbb{R}_+^d)$, $\SSS'(\mathbb{R}_+^d)$
 and the spaces in \eqref{eq:G-type-spaces}, $\alpha >0$.
\end{thm}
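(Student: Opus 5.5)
The proof reduces everything to sequence spaces. By Theorem \ref{glavnabeurling} and the identifications $\mathcal G_\alpha(\rdp)=T(\ell_{\alpha/2})$, $\mathcal G_{0,\alpha}(\rdp)=T(\ell_{0,\alpha/2})$, and their duals, it suffices to study the diagonal multipliers $a_n\mapsto m(n)a_n$ on $\ell_{\alpha/2}$, $\ell_{0,\alpha/2}$, $\ell'_{\alpha/2}$ and $\ell'_{0,\alpha/2}$. Here $m(n)=(|n|+c)^r$ for part (i) and $m(n)=e^{z(|n|+c)^r}$ for part (ii). Since the topologies are inductive or projective limits of the weighted $\ell^\infty$ spaces $\mathcal G_{\alpha,h}$, $\mathcal G'_{\alpha,h}$, continuity amounts to proving weight inequalities of the form
\[
|m(n)|\,e^{-h|n|^{1/\alpha}}\le C\,e^{-h'|n|^{1/\alpha}},\qquad n\in\NN_0^d .
\]
The quantifiers depend on the space. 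In the Roumieu case we need: for every $h$ there is some $h'>0$. In the Beurling case we need: for every $h'$ there is some $h$. The dual spaces use the reciprocal weights.

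For (i), take $r>0$. Then $|(|n|+c)^r|\le C_\varepsilon e^{\varepsilon|n|^{1/\alpha}}$ for every $\varepsilon>0$, because polynomial growth is dominated by any subexponential weight. So $h'=h-\varepsilon$ works in all four spaces; in the dual case we use $h'=h+\varepsilon$. For $r<0$ with $-c\notin\NN_0$, the quantity $|n|+c$ never vanishes and tends to infinity, so $|(|n|+c)^r|$ is bounded and continuity is immediate. The isomorphism statement follows because $E^r_c$ and $E^{-r}_c$ are mutually inverse multipliers. For $r>0$ the inverse needs $|n|+c\neq0$. I would note that if $-c\in\NN_0$ the multiplier vanishes at the indices with $|n|=-c$, so the isomorphism claim must be read under the hypothesis $-c\notin\NN_0$ in that case as well.

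For (ii), the key estimate is
\[
\bigl|e^{z(|n|+c)^r}\bigr| = e^{\operatorname{Re}(z(|n|+c)^r)} \le C e^{|z|(|n|+|c|)^r}\le C' e^{\delta|n|^{r}},
\]
for any $\delta>|z|$. The same bound holds for the inverse multiplier $e^{-z(|n|+c)^r}$.
\begin{itemize}
\item If $\alpha_2<1/r$, then $r<1/\alpha_2$, so $e^{\delta|n|^r}\le C_\varepsilon e^{\varepsilon|n|^{1/\alpha_2}}$ for every $\varepsilon>0$. This absorbs the growth in both the Roumieu space $\mathcal G_{\alpha_2}$ and its dual, so both the multiplier and its inverse are continuous.
\item In the Beurling case $\alpha_1=1/r$ is also allowed. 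Given $h'$, we choose $h=h'+\delta$, and $e^{\delta|n|^{1/\alpha_1}}$ is absorbed. This is exactly where the Beurling quantifiers give the extra room. In the Roumieu case the same choice would require $h>\delta$ for arbitrary given $h$, which fails; this explains the strict inequality for $\alpha_2$.
\end{itemize}

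The case $\operatorname{Re}z=0$ is the genuinely different one. Write
\[
\operatorname{Re}\bigl(z(|n|+c)^r\bigr)=-\operatorname{Im}z\cdot\operatorname{Im}\bigl((|n|+c)^r\bigr).
\]
For large $|n|$ we have $(|n|+c)^r=|n|^r(1+c/|n|)^r$, so $\operatorname{Im}((|n|+c)^r)=O(|n|^{r-1})$. This grows more slowly than $|n|^r$ but is not bounded when $r>1$. This is the main obstacle: $|m(n)|$ is no longer obviously polynomially bounded.

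When $r\le1$ the multiplier is bounded, and the proof goes through exactly as in Lemma \ref{lem:isomorphism_S}, for $\SSS$ and $\SSS'$ and for all $\alpha$. For $r>1$ I would try to exploit the real part more carefully. If $c$ is real (or $|n|+c$ has a fixed sign of imaginary part), a sharper analysis of the phase is needed. The first attempt would be to bound $\operatorname{Im}((|n|+c)^r)\le r|\operatorname{Im}c|\,(|n|+|c|)^{r-1}$. This yields a weight $e^{C|n|^{r-1}}$, which is absorbable in $\mathcal G_\alpha$ only when $r-1<1/\alpha$. If the full claim for all $\alpha$ cannot be obtained this way, I would restrict to $c$ real (or $r\le1$), where the multiplier is unimodular and everything is trivial.
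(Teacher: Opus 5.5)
Your method coincides with the paper's. Via Theorem \ref{glavnabeurling} you reduce everything to diagonal multipliers on the weighted sequence spaces and read continuity off weight comparisons. The paper writes this out only for $r=1$ and $\mathcal G_{\alpha_2}(\rdp)$, and declares the remaining cases similar.

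Everything you actually prove is correct, and your first reservation is justified. For $r>0$ and $-c\in\NN_0$ the multiplier $(|n|+c)^r$ vanishes at $|n|=-c$; for example $E^r_0 l_0=0$ because $0^r=0$. So the isomorphism assertion in (i) needs $-c\notin\NN_0$ for $r>0$ as well. Your explanation of why $\alpha_2<1/r$ must be strict is also more accurate than the paper's remark that $\alpha_2\le 1$ already gives $C_1<\infty$.

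The one unfinished place is the clause $\operatorname{Re}z=0$ with $r>1$, and there you should not hedge. Your weight $e^{C|n|^{r-1}}$ is sharp, so what you found is a counterexample to the statement, not a weakness of your method.

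Take $z=it$, $t\neq 0$, and $\operatorname{Im}c\neq 0$. Then $\operatorname{Re}(z(|n|+c)^r)=-t\,r\operatorname{Im}(c)\,|n|^{r-1}(1+o(1))$. An isomorphism needs both $e^{\pm zE^r_c}$ to be continuous, and one of them multiplies Laguerre coefficients by a factor of size $e^{\kappa |n|^{r-1}}$ with $\kappa=r|t\operatorname{Im}c|>0$.

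Concretely, let $r=2$, $c=is$, $z=it$ with $ts<0$. Then $z(|n|+c)^2=it(|n|^2-s^2)-2ts|n|$, so $|a_n(e^{zE^2_c}f)|=e^{2|ts||n|}|a_n(f)|$.
\begin{itemize}
\item Taking $a_n(f)=e^{-(\log(1+|n|))^2}$ gives $f\in\SSS(\rdp)$ with $e^{zE^2_c}f\notin\SSS'(\rdp)$.
\item Taking $a_n(f)=e^{-\varepsilon|n|^{1/\alpha}}$ with $0<\varepsilon<2|ts|$ gives $f\in\mathcal G_\alpha(\rdp)$, $\alpha\geq 1$, with $e^{zE^2_c}f\notin\mathcal G_\alpha(\rdp)$.
\end{itemize}

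Hence the last sentence of (ii) is false as stated. It does hold when $r\le 1$ or $c\in\RR$. For real $c$ the multiplier is unimodular except at finitely many $n$, so no ``sharper analysis of the phase'' is needed there, contrary to what you wrote. For $r>1$ and $\operatorname{Im}c\neq 0$ the clause holds exactly on:
\begin{itemize}
\item $\mathcal G_\alpha(\rdp)$ and $\mathcal G'_\alpha(\rdp)$ with $\alpha<1/(r-1)$;
\item $\mathcal G_{0,\alpha}(\rdp)$ and $\mathcal G'_{0,\alpha}(\rdp)$ with $\alpha\le 1/(r-1)$.
\end{itemize}
It never holds on $\SSS(\rdp)$ or $\SSS'(\rdp)$ in that case. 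The paper's proof misses this because it treats only $r=1$, where $|e^{it(|n|+c)}|=e^{-t\operatorname{Im}c}$ is constant.
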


\begin{proof} The proof for {\em (i)}
is an easy  consequence of Theorem \ref{glavnabeurling} and is therefore omitted. 

To prove{\em (ii)}, we assume that $r=1$ for simplicity, since the other cases can be proved in a similar way. 

As in \cite[Proposition 2.1]{TBM} it is sufficient to prove that the mapping 
$$
\{a_n(f)\}_{n\in\NN^d_0}\mapsto\{e^{z(|n|+c)}a_n(f)\}_{\NN^d_0}
$$ 
is continuous on the corresponding spaces  of sequences, and use Theorem \ref{glavnabeurling}. We briefly comment  the case $f\in \mathcal G_{\alpha_2}(\RR^d_+) $,  $ 0<\alpha_2 < 1$, since the other cases can be proved in a similar manner.

Let  $f\in \mathcal G_{\alpha_2}(\RR^d_+) $,  $ 0<\alpha_2 < 1$. Then $|a_n(f)|\leq C e^{-h |n|^{1/\alpha_2}} $ for some $C,h>0$ and, consequently, 
$$\ds \left|e^{z (|n|+c)} a_n(f)\right|= 
C \left|e^{\operatorname{Re}(z) c}\right|  
\left| e^{\operatorname{Re}(z)|n|- h |n|^{1/\alpha_2}} \right|  \leq C_1 e^{-h_1 |n|^{1/\alpha_2}}$$
where $h_1\in (0,h)$  and 
$$
C_1=C \left|e^{\operatorname{Re}(z)c}\right| \sup_{n\in\NN_0} e^{\operatorname{Re}(z)|n|- h |n|^{1/\alpha_2}+h_1 |n|^{1/\alpha_2}}. 
$$ 
Then $ 0<\alpha_2 \leq 1$ implies that $ C_1 < \infty$, wherefrom 
$\ds \{e^{z(|n|+c)}a_n(f)\}_{\NN^d_0}\in \ell _{\alpha_2/2}$ which, by 
Theorem  \ref{glavnabeurling} implies  that 
$\ds e^{z E_c} f\in \mathcal G_{\alpha_2}(\rdp) $,
and the proof is finished.
\end{proof}

\begin{rem}
From the proof of Theorem \ref{cont. E} it follows that, for each $r>0$, 
 the mapping (\ref{eE^r})  is continuous injection (but not surjection) on 
 $$\SSS(\mathbb{R}_+^d), \SSS'(\mathbb{R}_+^d),\mathcal G_{\alpha}(\RR^d_+),\mathcal G'_{\alpha}(\RR^d_+), \qquad \alpha >0,$$
 when  $\operatorname{Re}(z)<0$.  
\end{rem}\label{l2}

Since $e^{itE}:L^2(\RR_{+})\to L^2(\RR_{+})$ is an isometric isomorphism, it follows that  the domain of  the fractional power of Hankel-Clifford transform 
can be extended from $\SSS(\mathbb{R}_+)$ to $L^2(\RR_{+})$. Namely, for $f\in L^2(\RR_+)$ and $z=e^{it}$ we simply define its fractional power of Hankel-Clifford transform via  \eqref{eq:link-za-henkel} i.e. $\mathcal{I}_{\mathbf{z},0}f=e^{it E}f$.


Moreover, by the continuity property of $e^{itE}$ on $L^2(\RR_{+})$ we conclude
that, if $f\in L^2(\RR_{+})$ is the limit of a  sequence $f_n\in\SSS(\RR_{+})$ 
in  $L^2-$sense, then the fractional power of Hankel-Clifford transform 
$\mathcal{I}_{e^{it},0}f$ is the limit of $\mathcal{I}_{e^{it},0}f_n$ in 
$L^2-$sense.

In what follows, we extend the domain of $\mathcal{I}_{e^{it},0}$  further. To do so, we need some technical preliminaries.

Let $\alpha>0$, $\mathcal A\in \{\SSS(\RR_{+}),\mathcal G_{\alpha}(\RR_{+}),\mathcal G_{0,\alpha}(\RR^d_{+})\}$ and $c\in \CC$ and $z\in\CC\setminus\{0\}$ such that $e^{zE_c}(\mathcal A)\subseteq \mathcal A.$ 
Furthermore, choose an arbitrary $f\in \mathcal A'$ , $\varphi\in \mathcal A$. Then we yield  
\begin{equation}\label{dualexp}
_{\mathcal A'}\langle e^{zE_c} f,\varphi\rangle_{\mathcal A}=\sum_{n\in\NN_0} a_n(f) e^{z(n+c)^r}\cdot a_n(\varphi)=_{\mathcal A'}\langle  f,e^{zE_c} \varphi\rangle_{\mathcal A}
\end{equation}

 \par

By Theorem \ref{cont. E} and formula \eqref{eq:link-za-henkel}, 
we derive the following continuity properties for the fractional power of the Hankel-Clifford transform on $P-$spaces.

\begin{cor} \label{cor:hankel}
Let $z\in\CC$ be such that $|z|=1,z\not=1$, and 
$\mathbf{z} = ( z,z,\dots,z) \in \CC^d$. 
Then the fractional power of the Hankel-Clifford transform $\mathcal{I}_{\mathbf{z},0}$ given by
\eqref{eq:frac-pow-H-C} is a topological isomorphism on 
$\mathcal G_{0,\alpha}(\RR^d_+)$ and on $\mathcal G_{\alpha}(\RR^d_+)$,
$  \alpha > 0.$

In particular, the conclusion holds for  for the 
$d-$dimensional Hankel-Clifford transform $\mathcal{I}_{\mathbf{-1},0}$. 

\end{cor}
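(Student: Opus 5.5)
The proof will reduce the corollary to Theorem \ref{cont. E} $(ii)$ via the identity \eqref{eq:link-za-henkel}. Write $z=e^{it}$. Since $|z|=1$ and $z\neq 1$, we may choose $t\in(0,2\pi)$, in particular $t\in\RR\setminus\{0\}$. Lemma \ref{HC} and \eqref{eq:link-za-henkel} then give $\mathcal{I}_{\mathbf{z},0}f=e^{itE}f$ for every $f\in\SSS(\RR^d_+)$.

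The first step is to verify that both $\mathcal G_{\alpha}(\RR^d_+)$ and $\mathcal G_{0,\alpha}(\RR^d_+)$ are contained in $\SSS(\RR^d_+)$. By Theorem \ref{glavnabeurling}, elements of these spaces have Laguerre coefficients decaying like $e^{-h|n|^{1/\alpha}}$. Such sequences lie in $s(\NN_0^d)$, so \cite[Theorem 3.1]{Sm} places the corresponding functions in $\SSS(\RR^d_+)$. Consequently the integral formula \eqref{eq:frac-pow-H-C} applies to every element of these spaces, and on them $\mathcal{I}_{\mathbf{z},0}$ coincides with the restriction of $e^{itE}$.

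The second step is to apply the last assertion of Theorem \ref{cont. E} $(ii)$ with $r=1$, $c=0$, and the purely imaginary parameter $it$, so that $\operatorname{Re}(it)=0$. This shows that $e^{itE}$ is a topological isomorphism on $\mathcal G_{0,\alpha}(\RR^d_+)$ and on $\mathcal G_{\alpha}(\RR^d_+)$ for every $\alpha>0$. At the level of sequences this is immediate, since multiplication by $e^{it|n|}$ has modulus one and preserves every weighted norm $\sup_n|a_n|e^{h|n|^{1/\alpha}}$. Its inverse is $e^{-itE}$, which is again of the same form. Transferring this through the identification of the first step yields the claim for $\mathcal{I}_{\mathbf{z},0}$. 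Its inverse is $\mathcal{I}_{\bar{\mathbf z},0}$, which is admissible because $\bar z=e^{-it}\neq 1$.

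The special case is $z=-1=e^{i\pi}$, which gives $\mathcal{I}_{\mathbf{-1},0}=e^{i\pi E}$, the $d$-dimensional Hankel-Clifford transform. The main point requiring care is the first step: ensuring that the kernel representation of Lemma \ref{HC}, which is stated only for $\SSS(\RR^d_+)$, is legitimately available on the $P$-spaces. This is exactly where the inclusion into $\SSS(\RR^d_+)$ is needed, and it holds for all $\alpha>0$, including $0<\alpha<1$, by the coefficient characterization in Theorem \ref{glavnabeurling}.
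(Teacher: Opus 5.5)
Your proposal is correct and follows the paper's route. The paper also obtains the corollary from the identity \eqref{eq:link-za-henkel} (Lemma \ref{HC}) together with the $\operatorname{Re} z=0$ case of Theorem \ref{cont. E}\,(ii). Your explicit check that $\mathcal G_{\alpha}(\RR^d_+)$ and $\mathcal G_{0,\alpha}(\RR^d_+)$ lie in $\SSS(\RR^d_+)$, so that the kernel formula applies there, makes precise a step the paper leaves implicit.
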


From \cite[Theorem 2.8]{JPTV} we have
$$
\mathcal G_{\alpha}(\RR_{+}^d)\cong\bigotimes_{j=1}^d \mathcal G_{\alpha}(\RR_{+}),\ \mathcal G_{0, \alpha}(\RR^d_{+})\cong\bigotimes_{j=1}^d \mathcal G_{0, \alpha}(\RR_{+}).
$$ 
This fact, along with  (\ref{tenzorhc}) and \cite[Proposition 43.7]{Tr} is enough to  conclude that the same continuity properties for the  fractional power of the Hankel-Clifford transform $\mathcal{I}_{\mathbf{z},0}$ also hold for a more general case when
$$
\mathbf{z}=(e^{i\theta_1},\dots, e^{i\theta_d})\in\CC^d,\quad (\theta_1,\dots, \theta_d)\in\mathbb R^d\setminus\{(0,\dots,0)\}.
$$

\begin{rem} \label{rem:Hankel-ultradistributions}
By using the standard duality arguments we may extend the domain of the  fractional power of the Hankel-Clifford transform  $\mathcal{I}_{\mathbf{z},0}$
to ultradistributions as follows. For simplicity we briefly comment 
the one-dimensional case.

From Theorem \ref{glavnabeurling} it follows that 
 $f\in \mathcal G'_{\alpha} (\RR_{+} )$  ($f\in \mathcal G'_{0,\alpha} (\RR_{+} )$), $ \alpha > 0$, 
admits the Laguerre functions expansion $f=\sum_{n\in\NN_0} a_n(f) l_n$.
By Theorem \ref{cont. E} and  \eqref{dualexp} we conclude that 
$$
e^{it E}f\in  \mathcal G'_{\alpha}(\RR_{+}) \quad (e^{it E}f\in  \mathcal G'_{\alpha}(\RR_{+} )), \qquad t\in\RR\setminus\{0\},
$$ 
and it acts on  test functions $\varphi=\sum_{n\in\NN_0} a_n(\varphi)l_n\in \mathcal G_{\alpha}(\RR_{+})$ ($\varphi\in \mathcal G_{0,\alpha}(\RR_{+})$)
in the following way:
\begin{eqnarray*}
\langle e^{itE}f,\varphi\rangle 
& = &
\langle f,e^{itE}\varphi\rangle= \langle f,\left(\mathcal{I}_{\mathbf{e^{it}},0}\right) \varphi\rangle
\end{eqnarray*}
alowing us to extend the domain of  $\mathcal{I}_{\mathbf{z},0}$ to
 ultradistributions on  
 $\mathcal G'_{\alpha}(\RR_{+} )$ and $\mathcal G'_{0,\alpha}(\RR_{+} )$. 
 More precisely, when   $f\in \mathcal G'_{\alpha}(\RR_{+} )$ 
 ($f\in \mathcal G'_{0,\alpha}(\RR_{+} )$)
 we have that
 $$
 \mathcal{I}_{\mathbf{e^{it}},0}f=e^{itE}f \in \mathcal G'_{\alpha}(\RR_{+} ) \quad 
 ( \mathcal G'_{0,\alpha}(\RR_{+} ) ), 
 $$
acts on test functions 
$\varphi\in {\mathcal G}_{\alpha}(\RR_{+} )$ 
($\varphi\in {\mathcal G}_{0,\alpha}(\RR_{+} )$) in the usual way:
\begin{equation}\label{hcdistribution}
\langle  (\mathcal{I}_{\mathbf{e^{it}},0} )f,\varphi\rangle
= \langle f,\left(\mathcal{I}_{\mathbf{e^{it}},0}\right)\varphi\rangle.
\end{equation}    
\end{rem}

We conclude this section with 
the following negative result concerning the continuity of the operator 
$e^{z E^r_c}$.

\begin{thm} \label{discon E}
Let the operator $e^{z E^r_c}$ be given by (\ref{eE^r}), where 
$z\in\CC$ is such that $\operatorname{Re}(z)>0$, $c\in\CC$, $r>0$, 
$\alpha_1 > \frac{1}{r}$ and $\alpha_2 \geq \frac{1}{r} $. Then the mapping $e^{z E^r_c}$ is discontinuous from
\begin{itemize}
\item[(i)] $\SSS(\mathbb{R}_+^d)$ to $\SSS'(\mathbb{R}_+^d)$;
\item[(ii)] 
$\mathcal G_{0,\alpha_1}(\RR^d_+)$ to $\mathcal G'_{0,\alpha_1}(\RR^d_+)$;
\item[(iii)] $\mathcal G_{\alpha_2}(\RR^d_+)$ to $\mathcal G'_{\alpha_2}(\RR^d_+)$.
\end{itemize}
\end{thm}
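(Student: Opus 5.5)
The plan is to transfer everything to Laguerre coefficients via Theorem \ref{glavnabeurling} (and \cite[Theorem 3.1]{Sm} for $\SSS$, $\SSS'$). In each case we exhibit a single element $f$ of the domain space whose image $e^{zE^r_c}f$ is not in the target space. Since the image is not even defined as an element of the target, the map is discontinuous in the strongest sense: it does not map the domain into the target at all. This is the same mechanism as in the proof of Theorem \ref{thm:cont-vs-discont} \emph{(ii)}. The basic identity is
\[
|a_n(e^{zE^r_c}f)| = e^{\operatorname{Re}(z(|n|+c)^r)}|a_n(f)|.
\]
For large $|n|$ we have $(|n|+c)^r = |n|^r(1+c/|n|)^r = |n|^r + O(|n|^{r-1})$. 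Hence $\operatorname{Re}(z(|n|+c)^r) \ge \operatorname{Re}(z)|n|^r - C|n|^{r-1}$, which is at least $\tfrac12 \operatorname{Re}(z)|n|^r$ for $|n|$ large. Write $\delta = \operatorname{Re}(z)/2 > 0$.

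\emph{(i)} Take $f$ with $a_n(f) = e^{-|n|^{r/2}}$, or more simply $e^{-\varepsilon |n|^{r}}$ with $\varepsilon<\delta$. This sequence is in $s(\NN^d_0)$, so $f\in\SSS(\RR^d_+)$. The image coefficients are then bounded below by $e^{(\delta-\varepsilon)|n|^r}$ for $|n|$ large. This grows faster than any polynomial, so the image sequence is not in $s'(\NN^d_0)$ and $e^{zE^r_c}f\notin\SSS'(\RR^d_+)$.

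\emph{(iii)} Recall that $\mathcal G_{\alpha_2}$ corresponds to $|a_n|\le Ce^{-h|n|^{1/\alpha_2}}$ for some $h$, and $\mathcal G'_{\alpha_2}$ to $|a_n|\le C_he^{h|n|^{1/\alpha_2}}$ for every $h>0$.
\begin{itemize}
\item If $\alpha_2>1/r$, then $1/\alpha_2<r$. Take $a_n(f)=e^{-|n|^{1/\alpha_2}}$. The image coefficients behave like $e^{\delta|n|^r-|n|^{1/\alpha_2}}$, which dominates $e^{h|n|^{1/\alpha_2}}$ for every $h$, so the image is not in $\mathcal G'_{\alpha_2}$.
\item In the borderline case $\alpha_2=1/r$, take $a_n(f)=e^{-\varepsilon|n|^{r}}$ with $0<\varepsilon<\delta$. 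This lies in $\mathcal G_{1/r}$. The image has coefficients at least $e^{(\delta-\varepsilon)|n|^r}$. Choosing $h<\delta-\varepsilon$ violates the bound required for every $h$, so the image is not in $\mathcal G'_{1/r}$.
\end{itemize}

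\emph{(ii)} Here $\alpha_1>1/r$ is strict. Take $a_n(f)=e^{-|n|^{\beta}}$ with $1/\alpha_1<\beta<r$. Then $|a_n(f)|e^{h|n|^{1/\alpha_1}}$ is bounded for every $h$, so $f\in\mathcal G_{0,\alpha_1}$. The image coefficients are at least $e^{\delta|n|^r-|n|^\beta}$, which exceeds $Ce^{h|n|^{1/\alpha_1}}$ for every $h$. So the image is not in $\mathcal G'_{0,\alpha_1}$, whose defining condition is a bound $\le C e^{h|n|^{1/\alpha_1}}$ for some $h$.

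Strictness is needed in (ii): when $\alpha_1=1/r$, any $f\in\mathcal G_{0,1/r}$ has decay beating every $e^{-h|n|^r}$, so the image stays in $\mathcal G'_{0,1/r}$. This matches the hypothesis $\alpha_1>1/r$.

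The main obstacle is justifying the asymptotic lower bound for $\operatorname{Re}(z(|n|+c)^r)$ with the principal branch, uniformly in $n$ beyond some threshold. I would handle it via a binomial expansion of $(1+c/|n|)^r$ for $|n|>2|c|$, discarding finitely many indices. The other bookkeeping point is checking that the test sequences lie in the right Roumieu/Beurling classes, which follows directly from the inclusions \eqref{eq:G-sequence-space}.
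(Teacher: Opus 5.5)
Your proposal is correct and follows the same route as the paper. You pass to Laguerre coefficients and exhibit a single $f$ in the domain whose image coefficients $e^{\operatorname{Re}(z(|n|+c)^r)}|a_n(f)|$ violate the defining bound of the target space, so $e^{zE^r_c}$ does not even map into the target. The paper writes out only $r=1$, using $e^{-(\log(1+|n|))^2}$ and $e^{-(1+|n|)^{1/\alpha}}$ with $1<\alpha<\alpha_1$, and leaves (iii) to the reader; you instead handle general $r$ via $(|n|+c)^r=|n|^r+O(|n|^{r-1})$ and treat the borderline case $\alpha_2=1/r$ explicitly.
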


\begin{proof}
The proof is analogous to the proof of \cite[Proposition 2.2]{TBM}. We discuss only the case $r=1$, since the general case is entirely analogous. 


We first prove part $(i)$. Let 
$$a_n(f)=e^{-(\log(1+|n|))^2}, \qquad n\in\NN^d_0,
$$ 
be the Laguerre coefficients of $f$. Then
$$f=\sum_{n\in\NN^d_0}e^{-(\log(1+|n|))^2} l_n\in \SSS(\mathbb{R}_+^d)$$
since for every $N\geq 1$ there exists $C_N>0$ such that 
$$e^{-(\log(1+|n|))^2} \leq C_N  \langle n \rangle ^{-N}, \qquad n\in\NN^d_0.$$
On the other hand, it follows from (\ref{propagator powers of E})
$$\left|a_n(e^{zE_{c}}f)\right|=e^{\operatorname{Re}(z)(|n|+c)}e^{-(\log(1+|n|))^2} \succeq e^{\operatorname{Re} (z)\cdot (|n|+c)} \succeq \langle n \rangle ^{N},$$
for every $N\geq 1$  and $n\in\NN^d_0$. Hence, $e^{zE_{c}}f\notin \SSS'(\mathbb{R}_+^d)$.

Next, we prove $(ii)$ and leave $(iii)$ to the reader. We assume that $\alpha\in\RR$ is such that $1<\alpha<\alpha_1$. Let $a_n(f)=e^{-(1+|n|)^{1/\alpha}}$, $n\in\NN^d_0$, be the sequence of Laguerre coefficients of $f$. Then
$$f=\sum_{n\in\NN^d_0} e^{-(1+|n|)^{1/\alpha}} l_n\in \mathcal G_{0,\alpha_1}(\RR^d_+)$$
since
$$e^{-(1+|n|)^{1/\alpha}} \preceq e^{-h(1+|n|)^{1/\alpha_1}},\qquad n\in\NN^d_0,$$
for every $h\geq 1$.

On the other hand,  from (\ref{propagator powers of E}) it follows that
\begin{eqnarray*}
\left|a_n(e^{zE_{c}}f)\right|
& = & e^{\operatorname{Re}(z)(|n|+c)}e^{-(1+|n|)^{1/\alpha}} 
\succeq e^{ \operatorname{Re}(z)\cdot (|n|+c)} \\
& \succeq & e^{h(1+|n|)^{1/\alpha}}  
\succeq  e^{h(1+|n|)^{1/\alpha_1}}, \qquad n\in\NN^d_0,
\end{eqnarray*}
for every $h\geq 1$. Hence, $e^{zE_{c}}f\notin \mathcal G'_{0,\alpha_1}(\RR^d_+)$ and the proof is finished.
\end{proof}

Let $z\in\CC$ be such that $\mbox{Re}(z)>0$, and let $c\in\CC$.
From Theorem \ref{discon E} it follows that 
the propagator $\ds e^{zE _c}$
is discontinuous on all nontrivial $G-$type spaces
$\mathcal G_{0,\alpha_1}(\RR^d_+)$ and 
$\mathcal G_{\alpha_2}(\RR^d_+)$, i.e. when the indices are above
the critical values,  $\alpha_1 >  1$ and $\alpha_2 \geq 1$.
On the other hand, by Theorem \ref{cont. E} {\em (ii)} it is 
continuous on all $P-$type spaces
when the corresponding $G-$type spaces are trivial.

\begin{rem} The proof of Theorem \ref{discon E} shows that the mapping 
$e^{z E^r_c}$ is not only discontinuous but, in fact, not even well defined on the spaces appearing in its formulation.
\end{rem}

\section{A time-dependent equation with the Laguerre operator}
\label{Sec general equations}

In this section we consider a Cauchy problem involving  
the Laguerre operator, and derive continuity properties for the corresponding operator propagators. 
These results are thereafter used in Section \ref{sec5}.

We first introduce the general form $E_{\rho,c}$ of the Laguerre operator $E$ defined by \eqref{eq:LaguerreOperator}:
\begin{equation} \label{eq:E_ro_ce}
E_{\rho,c}=\rho E+c=\rho E_{c/\rho}, \qquad 
\rho\in\CC\setminus\{0\}, \quad  c \in \CC.
\end{equation}
By \eqref{eE^r} and \eqref{eq:E_ro_ce} it follows that the related propagator 
$e^{-itE^r_{\rho,c}}f$ 
is given by
\begin{equation} \label{eq:general_propagator}
e^{-itE^r_{\rho,c}}f = \sum_{n\in\NN_0^d}e^{-it(\rho |n|+c)^r} a_n(f)l_n, 
\qquad
f=\sum_{n\in\NN_0^d} a_n(f) l_n  \in \mathcal{G}'_0(\RR^d_+).    
\end{equation}

We consider the Cauchy problem 
\begin{equation}\label{E Schrodinger problem}
 \begin{array}{ll}
         & i\partial_t u(t,x)-E^r_{\rho, c}u(t,x)=F(t,x),\qquad 
         (t,x)\in [0,T] \times \RR^d_+,\\
         & u_0 (x) = u(0,x)=f(x), \end{array} 
\end{equation}  
with sutiable $F$ and $f$, and the  operator $E^r_{\rho,c}$,  $r>0$,
acting on  the $x-$variable. Note that \eqref{E Schrodinger problem} is a PDE when $r=1$, and pseudodifferntial equation otherwise, cf. \cite{TBM}. 

The  formal solution $u$ of the initial value problem
 \eqref{E Schrodinger problem} is  given by 
\begin{equation}\label{E Schrodinger resenje}
u(t,x)=(e^{-itE^r_{\rho,c}}f)(x) - i\int_{0}^t(e^{-i(t-s)E^r_{\rho, c}}F(s, \cdot))(x)ds, 
\end{equation}
$ (t,x)\in [0,T] \times \RR^d_+,$ and we assume that
$ u(t,x)\in {\mathcal G'_0(\RR^d)}^{[0,T]}$, i.e. for every $t\in [0,T]$ we have $u(t,\cdot)\in \mathcal {G}'_0(\RR^d).$

We recall that the problem (\ref{E Schrodinger problem}) is considered to be 
well-posed if the solution $u(t,x)$ continuously depends 
on the initial value $u_0$.
Hence, it is crucial to examine the continuity properties of the propagators 
\begin{equation}\label{eq:operator-T}
(T_{r,\rho,c} f)(t,x)=\left(e^{-itE^r_{\rho,c}}f\right)(x), \quad (t,x)\in [0,T]\in\RR_{+}^d  
\end{equation}
and
\begin{equation*}\label{eq:operator-W}
(W_{\rho,c} F)(t,x)=\int_0^t ( e^{-i(t-s)E^r_{\rho,c}} F(s,\cdot)) (x) ds, \quad (t,x)\in [0,T]\times \RR_{+}^d.
\end{equation*}

As in \cite{TBM}, we use norms of the form
$$
\Vert G \Vert_{L^p( [0,T];\mathcal{B})} =
\big \| \Vert G(\cdot) \Vert_{\mathcal{B}} \big \|_{L^p ([0,T])}, \qquad 1\leq p < \infty,
$$
where $\mathcal{B}$ is a Banach space, see also \cite{Taggart}.
In fact, we are interested in the following spaces:
\begin{eqnarray*}
L^1([0,T];\SSS'(\RR^d_+)) & = & \bigcup_{N>0}L^1([0,T];\SSS'_N (\RR^d_+)), \\
L^1([0,T];\mathcal{G}_{0,\alpha}(\RR^d_+)) & = & \bigcap_{h>0}L^1([0,T];\mathcal{G}_{\alpha,h}(\RR^d_+)),\\
L^1([0,T];\mathcal{G}_{\alpha}(\RR^d_+)) & = & \bigcup_{h>0}L^1([0,T];\mathcal{G}_{\alpha,h}(\RR^d_+)),\\
L^1([0,T];\mathcal{G}'_{0,\alpha}(\RR^d_+)) & = & \bigcap_{h>0}L^1([0,T];\mathcal{G}'_{\alpha,h}(\RR^d_+)),\\
L^1([0,T];\mathcal{G}'_{\alpha}(\RR^d_+)) & = & \bigcup_{h>0}L^1([0,T];\mathcal{G}'_{\alpha,h}(\RR^d_+)).
\end{eqnarray*}

Theorem \ref{discon E} indicates that the operator $\ds T_{r,\rho,c}:f\mapsto e^{-itE^r_{\rho,c}}f$ easily becomes discontinuous in the settings of classical function and (ultra)distribution spaces. More precisely,

\begin{thm} \label{thm:propagator_T}
Let there be given $r,T>0$, $\rho,c\in\mathbb{C}$ and let $\operatorname{Im}(\rho)>0$. Then the propagator
$T_{r,\rho, c}$ given by \eqref{eq:operator-T} is discontinuous from
\begin{itemize}
\item[(i)]  $\mathcal{S}(\RR^d_+)$ to $ L^1([0,T];\SSS'(\RR^d_+))$;
\item[(ii)] $ \mathcal G_{0,\alpha} (\RR^d_+)$ 
to $ L^1([0,T];  \mathcal G'_{0,\alpha}(\RR^d_+))$ when $\alpha > 1/r$;
\item[(iii)]
$ \mathcal G_{\alpha} (\RR^d_+)$ to $ L^1([0,T];  \mathcal G'_{\alpha}(\RR^d_+))$ when $\alpha\geq 1/r.$
\end{itemize}
\end{thm}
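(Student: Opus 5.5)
Following the proof of Theorem \ref{discon E} (and \cite[Proposition 2.2]{TBM}), we exhibit for each case an explicit initial datum $f$ in the source space such that $T_{r,\rho,c}f(t,\cdot)$ fails to lie in the target Banach spaces for almost every $t\in(0,T]$. This implies $T_{r,\rho,c}f\notin L^1([0,T];\mathcal B)$, so the map is not even well defined, let alone continuous. By \eqref{eq:general_propagator},
$$
|a_n(T_{r,\rho,c}f(t,\cdot))| = e^{\operatorname{Re}(-it(\rho|n|+c)^r)}\,|a_n(f)| = e^{t\,\operatorname{Im}((\rho|n|+c)^r)}\,|a_n(f)|.
$$
The key preliminary step is therefore an asymptotic lower bound for $\operatorname{Im}((\rho|n|+c)^r)$. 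Write $\rho=|\rho|e^{i\theta}$ with $\theta\in(0,\pi)$, since $\operatorname{Im}\rho>0$. Then $(\rho|n|+c)^r=|n|^r\rho^r(1+c/(\rho|n|))^r$ with the principal branch, so for large $|n|$ the argument of $\rho|n|+c$ tends to $\theta$. Hence $\operatorname{Im}((\rho|n|+c)^r)=|\rho|^r|n|^r\sin(r\theta)+O(|n|^{r-1})$.

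This is the main obstacle. We need $\sin(r\theta)>0$, which holds for $r\le 1$ (indeed for $r\theta<\pi$) but can fail for large $r$. I would first treat $r=1$, where $\operatorname{Im}(\rho|n|+c)=|n|\operatorname{Im}\rho+\operatorname{Im}c\ge \delta|n|-C$ exactly, as the paper does elsewhere. For general $r$ I would either restrict to the range where $r\theta<\pi$, mirroring the analogous statement in \cite{TBM}, or interpret $E^r_{\rho,c}$ as $\rho^r E^r_{c/\rho}$, which is the convention under which the exponent becomes $|n|^r\operatorname{Im}(\rho^r)$ up to lower order. In either case the goal is a bound $\operatorname{Im}((\rho|n|+c)^r)\ge \delta|n|^r-C$ with $\delta>0$ for all large $|n|$.

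Given this bound, each case is handled by choosing the datum as in Theorem \ref{discon E}:

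(i) Take $a_n(f)=e^{-(\log(1+|n|))^2}$, so $f\in\SSS(\RR^d_+)$. For every $t>0$ the coefficients grow like $e^{t\delta|n|^r-(\log(1+|n|))^2}$, which dominates $\langle n\rangle^N$ for every $N$. Hence $T_{r,\rho,c}f(t,\cdot)\notin\SSS'_N(\RR^d_+)$ for every $N$ and every $t\in(0,T]$. It follows that $\|T_{r,\rho,c}f(t,\cdot)\|_{\SSS'_N}=\infty$ on a set of positive measure, so $T_{r,\rho,c}f\notin L^1([0,T];\SSS'_N)$ for any $N$.

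(ii) For $\alpha>1/r$, pick $\beta$ with $1/r<\beta<\alpha$ and take $a_n(f)=e^{-(1+|n|)^{1/\beta}}$, so $f\in\mathcal G_{0,\alpha}(\RR^d_+)$. Since $1/\beta<r$, the coefficients of $T_{r,\rho,c}f(t,\cdot)$ grow like $e^{t\delta|n|^r}$ modulo lower order, which beats $e^{h|n|^{1/\alpha}}$ for every $h$. So the norm $\|\cdot\|_{\mathcal G'_{\alpha,h}}$ is infinite for all $t>0$, and the function is not in $L^1([0,T];\mathcal G'_{\alpha,h})$ for any $h$.

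(iii) For $\alpha\ge 1/r$, take $a_n(f)=e^{-\epsilon|n|^{1/\alpha}}$ with small $\epsilon>0$, so $f\in\mathcal G_{\alpha}(\RR^d_+)$. Membership in $\mathcal G'_\alpha$ requires coefficient bounds $e^{h|n|^{1/\alpha}}$ for all $h$. When $1/\alpha<r$ the exponential growth $e^{t\delta|n|^r}$ already violates this for every $t>0$. In the borderline case $1/\alpha=r$, the quantity $e^{(t\delta-\epsilon-h)|n|^{r}}$ is unbounded whenever $h<t\delta-\epsilon$. This fails for $t>(\epsilon+h)/\delta$, a set of positive measure once $\epsilon,h$ are small relative to $T\delta$, and that suffices to exclude membership in $L^1([0,T];\mathcal G'_{\alpha,h})$ for small $h$, hence in the projective limit $\mathcal G'_\alpha$. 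If one prefers a uniform argument, one can instead take $a_n(f)=e^{-|n|^{1/\beta}}$ with $\beta$ slightly above $\alpha$, which gives the same conclusion by the argument of (ii).
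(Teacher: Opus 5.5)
For $r=1$ your argument is correct and is essentially the paper's. The paper writes $e^{-itE_{\rho,c}}=e^{-it\rho(E+c/\rho)}$, notes $\operatorname{Re}(-it\rho)=t\operatorname{Im}\rho>0$ for $t>0$, and invokes Theorem~\ref{discon E}(i), whose proof uses exactly your datum $a_n(f)=e^{-(\log(1+|n|))^2}$. Your observation that a single $t$-independent datum blows up on a set of $t$ of positive measure is what actually excludes membership in an $L^1$-space (in (i) and (ii) the blow-up holds for all $t\in(0,T]$). The paper's ``for some $t$'' is not enough by itself.

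The obstacle you isolate for $r>1$ is not a weakness of the method: the statement is false there. The paper's claim that the general case ``follows by analogy'' overlooks this. Take $r=2$, $\rho=i$, $c=0$. With the principal branch, $(i|n|)^2=-|n|^2$, so \eqref{eq:general_propagator} gives $a_n(T_{2,i,0}f(t,\cdot))=e^{it|n|^2}a_n(f)$. This unimodular multiplier preserves every norm $\|\cdot\|_{\SSS'_N}$, $\|\cdot\|_{\mathcal G_{\alpha,h}}$, $\|\cdot\|_{\mathcal G'_{\alpha,h}}$, so $T_{2,i,0}$ is continuous in all three settings. With $\rho=e^{3\pi i/4}$ one even gets the damping factor $e^{-t|n|^2}$.

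What your argument needs is $\operatorname{Im}(\rho^r)>0$, i.e.\ $\sin(r\arg\rho)>0$. This follows from $\operatorname{Im}\rho>0$ only when $0<r\le 1$. Under it, $\operatorname{Im}((\rho|n|+c)^r)\ge\delta|n|^r-C$ holds and your treatment of (i)--(iii) goes through.

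Your second way out is not a fix. Since $\arg\rho\in(0,\pi)$ and $\arg(|n|+c/\rho)\to 0$, one already has $(\rho|n|+c)^r=\rho^r(|n|+c/\rho)^r$ with principal branches for large $|n|$. So reading $E^r_{\rho,c}$ as $\rho^rE^r_{c/\rho}$ changes nothing. The reading under which $\operatorname{Im}\rho>0$ suffices for every $r>0$ is $\rho E^r_{c/\rho}$, with multiplier $e^{-it\rho(|n|+c/\rho)^r}$, but that is not \eqref{eq:general_propagator}.

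Finally, drop the ``uniform argument'' at the end of (iii), because at the borderline $\alpha=1/r$ it fails either way. For $\beta>\alpha$ the datum $e^{-|n|^{1/\beta}}$ is not in $\mathcal G_\alpha(\rdp)$. For $\beta<\alpha$ one has $1/\beta>r$, so $e^{t\delta|n|^r-|n|^{1/\beta}}$ decays. Your main argument, with $a_n(f)=e^{-\epsilon|n|^{r}}$ and $t>(\epsilon+h)/\delta$, is the right one.

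Note that this argument uses $\mathcal G'_\alpha(\rdp)=\bigcap_{h>0}\mathcal G'_{\alpha,h}(\rdp)$, as in \eqref{eq:G-sequence-space}. The displayed description of $L^1([0,T];\mathcal G'_\alpha(\rdp))$ before the theorem interchanges union and intersection. Under that literal reading, the borderline case of (iii) would fail.
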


\begin{proof}
We again restrict our attention to the case 
$r=1$, since the general case follows by analogy.
Moreover, we give the proof for  $\SSS(\RR^d_+)$, since the other cases follow by similar arguments.
As
$$\left(T_{1,\rho,c} f\right)(t,x)=e^{-itE_{\rho,c}}f (x)
= e^{-it(\rho E+c)} f (x)
= e^{-it\rho \left(E+\frac{c}{\rho}\right)}f (x),
\qquad f \in \SSS(\RR^d_+),$$
it is sufficient to prove that for some $t\in[0,T]$ and $f\in\SSS(\RR^d_+)$
we have $(T_{1,\rho,c}) f(t,x)\not\in\SSS'(\RR^d_+)$. 
Since $\operatorname{Re}(-it\rho)=t \operatorname{Im}(\rho)$, it follows that $\operatorname{Re}(-it\rho)>0$, for every $t>0$. The claim now follows 
from  Theorem \ref{discon E} $(i)$.
\end{proof}

On the other hand, Theorem \ref{cont. E} indicates that the propagators are continuous on $P-$spaces when $\alpha>0$ is below the critical value 
$1/r $:

\begin{thm} \label{thm:propagators_T and W}
Let there be given $r,T>0$, and $c\in\mathbb{C}$. Let $\alpha_1,\alpha_2\in \RR_{+}$ be such that $0<\alpha_1 < \frac{1}{r}$ and 
$\alpha_2 \leq \frac{1}{r}$. Then the following is true:
\begin{itemize}
\item[(i)] The operator $T_{r,\rho, c}$ is continuous from 
$$\mathcal{G}_{0,\alpha_1}(\RR^d_+),\mathcal{G}_{\alpha_2}(\RR^d_+), \mathcal{G}_{\alpha_2}'(\RR^d_+), \;\; \text{and}\;\; \mathcal{G}_{0,\alpha_1}'(\RR^d_+)
$$
to the spaces 
\begin{eqnarray*}
L^1([0,T];\mathcal{G}_{0,\alpha_1}(\RR^d_+)), \quad L^1([0,T];\mathcal{G}_{\alpha_2}(\RR^d_+)),\\
L^1([0,T];\mathcal{G}_{\alpha_2}'(\RR^d_+)), \qquad \text{and} 
\qquad
L^1([0,T];\mathcal{G}_{0,\alpha_1}'(\RR^d_+)),
\end{eqnarray*}
respectively.
\item[(ii)] The propagator $W_{r, \rho, c}$ is continuous  on the spaces
\begin{eqnarray*}
L^1([0,T];\mathcal{G}_{0,\alpha_1}(\RR^d_+)), \quad L^1([0,T];\mathcal{G}_{\alpha_2}(\RR^d_+)),\\
L^1([0,T];\mathcal{G}_{\alpha_2}'(\RR^d_+)), \qquad \text{and} 
\qquad
L^1([0,T];\mathcal{G}_{0,\alpha_1}'(\RR^d_+)).
\end{eqnarray*}
\end{itemize}
\end{thm}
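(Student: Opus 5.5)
We reduce everything to the sequence spaces of Laguerre coefficients, as in the proof of Theorem \ref{cont. E}. For simplicity we take $r=1$; general $r>0$ works the same way, with $|n|$ replaced by $(\rho|n|+c)^r$ and the threshold $1$ replaced by $1/r$. The statement leaves $\rho$ unrestricted, and the estimates below are uniform in $t$ only if we control $\operatorname{Re}(-it(\rho|n|+c))=t\operatorname{Im}(\rho)|n|+t\operatorname{Im}(c)$. So we work for arbitrary $\rho\in\CC\setminus\{0\}$ and use only $t\in[0,T]$, which gives
\[
|e^{-it(\rho|n|+c)}|\le e^{T|\operatorname{Im}\rho|\,|n|+T|\operatorname{Im} c|},\qquad (t,n)\in[0,T]\times\NN_0^d .
\]
By Theorem \ref{glavnabeurling}, $f$ lies in $\mathcal G_{\alpha,h}$ (resp.\ $\mathcal G'_{\alpha,h}$) exactly when $\sup_n|a_n(f)|e^{\pm h|n|^{1/\alpha}}<\infty$, where $a_n(e^{-itE_{\rho,c}}f)=e^{-it(\rho|n|+c)}a_n(f)$.

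\textbf{Key step.} We show that for every $h>0$ and every $0<h_1<h$ there is $C$, independent of $t\in[0,T]$, with
\[
\sup_n |a_n(e^{-itE_{\rho,c}}f)|\,e^{h_1|n|^{1/\alpha}}\le C\sup_n|a_n(f)|\,e^{h|n|^{1/\alpha}} .
\]
For this it suffices that
\[
\sup_n e^{T|\operatorname{Im}\rho||n|-(h-h_1)|n|^{1/\alpha}}<\infty .
\]
This holds whenever $1/\alpha>1$. In the borderline case $\alpha=1$ it holds as long as $h-h_1>T|\operatorname{Im}\rho|$. The dual spaces are handled in the same way with the signs of the exponents reversed: from $|a_n|\le Ce^{h|n|^{1/\alpha}}$ we get $|a_n(T f(t))|\le C'e^{h_2|n|^{1/\alpha}}$ for any $h_2>h$, again uniformly in $t$. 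Integrating the resulting pointwise-in-$t$ norm bound over $[0,T]$ gives $\|Tf\|_{L^1([0,T];\mathcal G_{\alpha,h_1})}\le TC\|f\|_{\mathcal G_{\alpha,h}}$.

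\textbf{Passing to the limit spaces.} Roumieu test spaces are unions over $h$, so continuity on each step $\mathcal G_{\alpha,h}$ suffices for $\mathcal G_{\alpha_2}$. For Beurling test spaces we handle each $h_1$ by choosing a suitable $h$. The duals $\mathcal G'$ are intersections, so for each $h_2$ we pick an $h<h_2$; this is also where we use $|a_n|\le C_h e^{h|n|^{1/\alpha}}$ for all $h$. The $\mathcal G'_{0,\alpha}$ spaces are unions, so they follow directly. One must also check that $t\mapsto e^{-itE_{\rho,c}}f$ is Bochner measurable into the Banach step. We would argue this via continuity in $t$ after slightly enlarging $h$, using dominated convergence on the weighted coefficients.

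\textbf{Main obstacle: the borderline index.} At $\alpha=1/r$ the Roumieu space $\mathcal G_{\alpha_2}$ requires $h>T|\operatorname{Im}\rho|$, which is fine only because we may pass to a larger or smaller step. The dual borderline is delicate, since $\mathcal G'_{1}$ requires every $h$, and the growth $e^{T|\operatorname{Im}\rho||n|}$ then cannot be absorbed when $h$ is small. The Beurling cases at the borderline fail for the same reason, which explains why $\alpha_1<1/r$ is strict there. So for $\mathcal G'_{\alpha_2}$ at $\alpha_2=1/r$ we must either restrict to $\operatorname{Im}\rho\le0$ (so the exponent is $\le0$, in line with Theorem \ref{thm:propagator_T}) or interpret the statement with a small enough $T$. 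We would first check whether the intended statement assumes $\operatorname{Im}\rho\le 0$.

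\textbf{The operator $W$.} For $F\in L^1([0,T];\mathcal G_{\alpha,h})$ we apply the same uniform bound to $e^{-i(t-s)E}$ for $0\le s\le t\le T$, giving $\|W F(t)\|_{h_1}\le C\int_0^T\|F(s)\|_h\,ds$. Hence $\|WF\|_{L^1([0,T];\mathcal G_{\alpha,h_1})}\le CT\|F\|_{L^1([0,T];\mathcal G_{\alpha,h})}$. The four limit spaces then follow exactly as for $T_{r,\rho,c}$.
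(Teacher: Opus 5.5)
For $\alpha$ strictly below $1/r$ your argument is correct and is essentially the paper's. Both use a uniform-in-$t$ bound on the Laguerre coefficients, absorb it into $e^{-(h-h_1)|n|^{1/\alpha}}$, integrate over $[0,T]$, and reuse the bound with $t-s$ for $W_{r,\rho,c}$. The paper phrases continuity through bounded sets of the (DFS)-space and bornologicity rather than step by step, which is equivalent. Your measurability remark covers a point the paper skips.

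The gap is your treatment of the borderline index, where you have the Roumieu and Beurling cases reversed.

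\textbf{Roumieu borderline.} You cannot ``pass to a larger step'' in $\mathcal G_{1/r}(\RR^d_+)=\bigcup_h\mathcal G_{1/r,h}(\RR^d_+)$. The step is dictated by $f$ and may be arbitrarily small, and small steps are exactly where absorption fails. Concretely, let $r=1$ and $\operatorname{Im}\rho>0$, and take $f=\sum_n e^{-h_0|n|}l_n\in\mathcal G_1(\RR^d_+)$ with $0<h_0<T\operatorname{Im}\rho$. Then
\[
|a_n(T_{1,\rho,c}f(t))|=e^{t\operatorname{Im}c}\,e^{(t\operatorname{Im}\rho-h_0)|n|},
\]
which grows exponentially for every $t\in(h_0/\operatorname{Im}\rho,T]$. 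Hence $T_{1,\rho,c}f\notin L^1([0,T];\mathcal G_{1,h}(\RR^d_+))$ for every $h>0$; compare Theorem \ref{thm:propagator_T}(iii). The same happens for $W_{1,\rho,c}$ with $F(s)\equiv f$.

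So the case $\alpha_2=1/r$ of the statement is false when $\operatorname{Im}\rho>0$. The same holds for $\mathcal G'_{1/r}$: take $a_n(f)=1$, and then $T_{1,\rho,c}f(t)\notin\mathcal G'_1$ for every $t>0$. Shrinking $T$ rescues neither case, since $h_0$ can be taken arbitrarily small. The paper's own proof has the same blind spot. Its inequality $M_1e^{T\operatorname{Im}(\rho)|n|-h|n|^{1/\alpha_2}}\le M_2e^{-h_1|n|^{1/\alpha_2}}$ ``for arbitrary $h_1\in[0,h)$'' silently requires $\alpha_2<1$.

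\textbf{Beurling borderline.} Conversely, this case is fine, contrary to your claim. For $f\in\mathcal G_{0,1}(\RR^d_+)$ and any target step $h_1$, use the bound for $f$ at $h=h_1+T|\operatorname{Im}\rho|$. This gives
\[
\sup_{t\in[0,T]}\|T_{1,\rho,c}f(t)\|_{\mathcal G_{1,h_1}}\le e^{T|\operatorname{Im}c|}\,\|f\|_{\mathcal G_{1,h}} .
\]
Also $\mathcal G'_{0,1}$ is a union, so the continuous map $\mathcal G'_{1,h}\to\mathcal G'_{1,h+T|\operatorname{Im}\rho|}$ suffices.

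\textbf{What your method proves.} It gives the theorem under the hypotheses of Theorem \ref{cont. E}(ii), namely $0<\alpha_1\le 1/r$ and $0<\alpha_2<1/r$, which is consistent with Remark \ref{ill}. Alternatively, for $r=1$ it gives all $\alpha>0$ under your other proposed restriction $\operatorname{Im}\rho\le 0$, since the multiplier is then bounded by $e^{T|\operatorname{Im}c|}$. You should state the result in one of these forms rather than claim the Roumieu borderline.
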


\begin{proof}
We comment just the case $(i)$  for $ \mathcal{G}_{\alpha_2}(\RR^d_+),  $ since the other cases can be proved in  a similar way.  Once again, we consider the case $r=1$ only. We will focus on the operator $E_{\rho,c}$, since the continuity properties of $W_{\rho,c}$ can be derived from its definition and the estimates for $T_{1,\rho,c}$.

Let $B$ be a bounded set in $\mathcal{G}_{\alpha_2}(\RR^d_+)$. 
Since $\mathcal{G}_{\alpha_2}(\RR^d_+)$ is a $(DFS)-$space, it follows that 
there exists $h>0$ such that we have
$$ \sup_{f\in B} |a_n(f)|e^{h |n|^{1/\alpha_2}}<\infty, \qquad 
\forall n\in\NN_0^d, \qquad f \in B.$$
Let $\ds M=\sup_{t\in [0,T]} \left| e^{-it\rho c}\right|.$ Then for the arbitrary $t\in [0,T]$ and $f\in B$, we obtain
$$
\left|a_n\left(e^{-itE_{\rho,c}}f\right)\right|\leq M e^{t\operatorname{Im}(\rho) |n|-h|n|^{1/\alpha_2}}  \leq M_1 e^{T\operatorname{Im}(\rho) |n|-h|n|^{1/\alpha_2}}  \leq M_2 e^{-h_1|n|^{1/\alpha_2}  }
$$
for arbitrary $h_1\in [0,h)$ and suitable constants $M, M_1, M_2>0$. Therefore, 
$$\|e^{-it\rho E_{\rho,c}} f\|_{L^1([0,T];\mathcal{G}_{\alpha,h_1}(\RR^d_+))}\leq M_2 T^d, \ \mbox{for every}\ f\in B,$$
which implies that the set 
$$\{  e^{-itE_{\rho,c}}f \;\; : \;\; f \in B \} $$ 
is bounded in $L^1([0,T];\mathcal{G}_{\alpha,h_1}(\RR^d_+))$, wherefrom it follows that it is bounded in $L^1([0,T];\mathcal{G}_{\alpha_2}(\RR^d_+))$ as well. Thus the propagator 
$T_{1,\rho,c}$  which maps a bornological space to a locally convex space 
preserves boundedness of sets. Consequently, it is continuous.
\end{proof}

\begin{rem}\label{ill}
By Theorems \ref{thm:propagator_T} and \ref{thm:propagators_T and W}, 
it follows  that for $r=1$, the equation (\ref{E Schrodinger problem}) is well-posed in the setting of $P-$spaces, $\mathcal G_{0,\alpha_1}(\rdp)$ and $ \mathcal G_{\alpha_2}(\rdp)$,  when  $0<\alpha_1 \leq 1$
and $0<\alpha_2 < 1$. 
On the other hand,  when $ \operatorname{Im} \rho >0$,
the equation (\ref{E Schrodinger problem}) is
ill-posed for Schwartz functions on $\RR^d_+$, $G$-type functions (and consequently $P-$spaces above the critical value $1$). 
\end{rem}

\section{Relation to Schr\"{o}dinger type equations with harmonic oscillator} \label{sec5}

As it is shown in \cite[Section 5]{JPTV}, $P-$spaces on positive orthants are  closely related to Pilipovi\'{c} spaces on the whole space through the simple change of variables $f \mapsto \mathcal J^{-1} f$, 
see \eqref{J-intro}. 
In this section we  exploit this connection  to transform the  equation (\ref{E Schrodinger problem}) into a
Schr\"{o}dinger type equation of the form considered in  \cite[(4.18)]{TBM}.
In addition, we provide interpretation of these results 
when the initial values are ultradistributions. To that end we 
utilize transposes and inverses of operators related
to the radial symmetry.

\subsection{The problem with test functions as initial values} 

In the sequel, we simplify the equation (\ref{E Schrodinger problem}) by taking $r=1$, and focus on the homogeneous $1-$dimensional case, i.e. $d=1$ and $F(t,x)\equiv 0$.

Let $E_{\rho,c}$ be given by  \eqref{eq:E_ro_ce}, $\rho\in\CC \setminus \{ 0\}$,  $c\in\CC$. 
The change of variables  $x=r^2$  applied to the Cauchy problem
\begin{equation}\label{1dproblem2}
\begin{array}{ll}
         & i\partial_t u(t,x)-E_{\rho,c} u(t,x)=0,\quad (t,x)\in [0,T] \times \RR_+,
         \\
         & u(0,x)=f(x),  \end{array}  
\end{equation} 
yields to
\begin{equation}
\label{1dproblem3}
 \begin{array}{ll}
         & i\partial_t v(t,r)-J_{\rho, c} v(t,r)=0, \quad(t,r)\in [0,T] \times \RR,
         \\
         & v(0,r)=g(r), \end{array} 
\end{equation}
where    $v(t,r)=u(t,r^2)$, $g(r)=f(r^2)$ and the operator $J_{\rho, c}$ 
is given by
\begin{equation} \label{operatorJrc}
J_{\rho, c} = -\frac{\rho}{4}
\left (  \frac{d^2}{d r ^2} + \frac{1}{r}  \frac{d}{d r}
 - r^2\right ) + c-\frac{\rho}{2},
\end{equation}
i.e.   $J_{\rho, c}$ 
acts on a function $w=w(r)$ in the following way:
\begin{equation}\label{operatorJ}\left(J_{\rho,c}w\right) (r)=-\rho\frac{w''(r)}{4}-\rho\frac{w'(r)}{4r}+\rho\frac{r^2}{4}w(r)  -\left(\frac{\rho}{2}- c\right)w(r), \quad r \in \RR. \end{equation}
Therefore, for suitable function $u\in\SSS(\RR_{+})$  and the function $v$ defined  on $\RR$ by $v(r)=u(r^2) $ we have
\begin{equation}\label{JE}
(J_{\rho,c} v)(r)= ( E_{\rho,c} u)(r^2)  = (E_{\rho,c} u \circ \rho )(r) =  (E_{\rho,c} u)(\rho (r)), \qquad r\in\RR.
\end{equation}
Note that the function $g$ in 
\eqref{1dproblem3} 
is even. Hence, the same holds for  $x\mapsto v(t,x)$, for each $t\in [0,T]$. 

The converse relations can also be proved. Namely, 
equation
(\ref{1dproblem3}) 
becomes 
(\ref{1dproblem2})
after applying substitution $x=r^2$.

\par

Next, we recall that $ f \in \mathcal H_{\alpha/2,even}(\RR ) $
($ f \in \mathcal H_{0,\alpha/2,even}(\RR ) $), $ \alpha > 0$,
if \eqref{eq:even} holds.

From the previous considerations and utilizing  \\ 
\cite[Theorem$5.5$, Theorem$5.7$]{JPTV} we have the obtain the following proposition. 

\begin{prop}\label{kvadrat}
Let $E_{\rho,c}$ be given by  \eqref{eq:E_ro_ce}, $\rho\in\CC \setminus \{ 0\}$,  $c\in\CC$,  and let $\alpha>0$.  Then the equation \eqref{1dproblem2} with the initial value $f\in \mathcal G_{\alpha}(\RR_{+})$ is transformed 
into the equation \eqref{1dproblem3} with the initial value $g(r)=f(r^2)\in \mathcal H_{\alpha/2,radial}(\RR ).$ 

Conversely, for  initial value $g\in \mathcal H_{\alpha/2,radial}(\RR)$  equation \eqref{1dproblem3} transforms into the equation \eqref{1dproblem2} via  the substition $x=r^2$ with  initial value  $f(x)=g(\sqrt{x})\in\mathcal G_{\alpha}(\RR)$.

The same holds if  $ \mathcal G_{\alpha}(\RR)$ is replaced by 
 $\mathcal G_{0,\alpha}(\RR)$, and $\mathcal H_{\alpha/2,radial}(\RR )$
 is replaced by $\mathcal H_{0,\alpha/2,radial}(\RR )$ at each occurence.
\end{prop}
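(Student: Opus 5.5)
The plan is to prove Proposition \ref{kvadrat} in two parts. First comes a purely formal part: the change of variables $x=r^2$ transforms \eqref{1dproblem2} into \eqref{1dproblem3}, and the reverse substitution undoes it. Second comes a functional-analytic part: the initial values correspond under $\mathcal J^{-1}$ and $\mathcal J$, see \eqref{J-intro}, between $\mathcal G_\alpha(\RR_+)$ and $\mathcal H_{\alpha/2,radial}(\RR)$ (and likewise in the Beurling case).

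For the formal part, let $u(t,\cdot)$ be smooth on $\RR_+$ and set $v(t,r)=u(t,r^2)$. The chain rule gives
\[
\partial_r v = 2r\,\partial_x u, \qquad \partial_r^2 v = 2\partial_x u + 4r^2 \partial_x^2 u .
\]
Hence
\[
\tfrac14\big(\partial_r^2 v + r^{-1}\partial_r v\big) = x\,\partial_x^2 u + \partial_x u \qquad (x=r^2).
\]
Inserting this into \eqref{operatorJ} recovers exactly $\rho E u + c u$, evaluated at $x=r^2$. This is identity \eqref{JE}. Since $\partial_t v(t,r)=(\partial_t u)(t,r^2)$, the function $u$ solves \eqref{1dproblem2} if and only if $v$ solves \eqref{1dproblem3}, at least for $r\neq 0$. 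At $r=0$ we use continuity, since both sides extend continuously to $r=0$: $v$ is even and smooth, so $r^{-1}\partial_r v$ extends. The initial conditions match, since $v(0,r)=f(r^2)=g(r)$. Conversely, given an even $v$, define $u(t,x)=v(t,\sqrt x)$; the same computation read backwards gives \eqref{1dproblem2}.

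For the functional part, the paper already records that $\mathcal J^{-1}$ maps $\mathcal G_\alpha(\RR_+)$ onto the even part of $\mathcal H_{\alpha/2}(\RR)$, with inverse $\mathcal J$ (Theorems 5.5 and 5.7 of \cite{JPTV}). I would invoke these results directly. The remark after \eqref{RadialOrt} shows that in dimension one, even equals radial because $O(1)=\{\pm1\}$. Therefore
\[
g=\mathcal J^{-1}f\in\mathcal H_{\alpha/2,even}(\RR)=\mathcal H_{\alpha/2,radial}(\RR),
\]
and conversely $f(x)=g(\sqrt x)$ lies in $\mathcal G_\alpha(\RR_+)$. The same theorems treat the Beurling spaces $\mathcal G_{0,\alpha}$ and $\mathcal H_{0,\alpha/2}$, which gives the last assertion.

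The main obstacle is the removable singularity at $r=0$, i.e.\ making sense of $r^{-1}v'(r)$ at the origin. I would handle it using evenness: $v'$ is odd and smooth, so $v'(r)/r=\int_0^1 v''(sr)\,ds$ is smooth. This makes \eqref{operatorJ} well defined on all of $\RR$.
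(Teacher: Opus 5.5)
Your proposal is correct and follows the same route as the paper, which obtains the proposition from the chain-rule computation behind \eqref{JE}, together with \cite[Theorems 5.5 and 5.7]{JPTV} and the fact that even and radial coincide on $\RR$. Your treatment of the removable singularity of $r^{-1}v'(r)$ at $r=0$ via evenness of $v$ is a detail the paper leaves implicit, and you handle it correctly.
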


For the sake of simplicity, from now on we considering the Roumieu case. The Beurling case can be treated in a similar way, and details are left for the reader.

We continue our investigations by establishing a connection between the operator
$J_{\rho, c}$ as given in  \eqref{operatorJrc} and the harmonic oscillators considered in \cite{TBM}. 

We slightly modify the notation from \cite[(1.7)]{TBM}  for $d=2$, and define the operator
\begin{eqnarray} \label{eq:harm-osc}
 H_{\rho,c} & = & \rho H + c
=
\rho \left(-\Delta +|x|^2 \right) + c \\
& = &
\rho \left(-\frac{\partial ^2}{\partial ^2 x_1}-\frac{\partial^2}{\partial ^2 x_2}+x_1^2+x_2^2\right)+ c, 
\qquad x = (x_1, x_2) \in \RR^2,  \nonumber
   \end{eqnarray}
$\rho \in\CC \setminus \{ 0 \},$ $,c\in\CC,$ so that $H= H_{1,0}$.
The correspoding propagator $ e^{-it  H_{\rho,c}}$ is given by
\begin{equation} \label{eq:HarmOscProp}
e^{-it  H_{\rho,c}} f  = \sum_{n\in\mathbb{N}^d_0} 
e^{-2 it  \rho (|n| +1 + \frac{c}{\rho} )}
c_n(f) h_n
\end{equation}
where the coefficients $c_n (f)$ are given by \eqref{eq:Hermitecoeff},
cf. \cite[(2.3)]{TBM}.

It is straightforward to verify that if
$g$ is smooth enough and 
defines  the radial function 
$$
\modu{g}(x) = \modu{g}(x_1,x_2) = (\Phi g) (x_1,x_2)
=g(\sqrt{x_1^2+x_2^2}), \qquad  x = (x_1, x_2) \in \RR^2,
$$
then the following identity holds: 
\begin{equation} \label{radial}
\left(J_{\rho,c} \ g\right) (|x|)=
\left ( 
\frac{\rho(-\Delta+ |x|^2)}{4} +(c-\frac{\rho}{2}) 
\right) \modu{g} (x)
=H_{\frac{\rho}{4},c-\frac{\rho}{2}} \modu{g} (x), \qquad x \in \RR ^2,
\end{equation}
$ |x| = \sqrt{x_1^2+x_2^2}$, 
and the operator $J_{\rho,c}$ is given by \eqref{operatorJrc}.

\par

Assume now that $u=u(t,x)$ is a solution to equation (\ref{1dproblem2}) with the initial value $f\in\mathcal G_{\alpha}(\RR_+)$. By Proposition \ref{kvadrat} 
the function $v(t,r)=u(t,r^2)$ satisfies equation (\ref{1dproblem3}) 
with the initial value $g(r)=f(r^2)\in \mathcal H_{\alpha/2,even}(\mathbb R).$ 
Define 
$$
w(t,x_1,x_2):=v(t,\sqrt{x_1^2+x_2^2})=u(t,x_1^2+x_2^2), 
\qquad
(t, (x_1,x_2)) \in [0,T] \times  \RR^2.
$$ 

Then $v(t,r)=w(t,x_1,x_2)$, $ r = \sqrt{x_1^2+x_2^2}$, 
and the equation (\ref{1dproblem3}) becomes
$$
0=\left(i\partial_t -J_{\rho,c}\right) v(t,r)= 
\left(i\partial_t  - H_{\frac{\rho}{4},c-\frac{\rho}{2}}\right) w(t,x_1,x_2),
$$
where we used \eqref{radial}. Therefore,  the following result holds.

\begin{prop} \label{prop:Toft-i-mi}
Let   $f\in \mathcal G_{\alpha}(\RR_{+})$, $\alpha>0$. 
Suppose that the function $u(t,x)$ solves the initial value problem (\ref{1dproblem2}) with  initial value $f$. Then the function $w(t,x_1,x_2)=u(t,x_1^2+x_2^2)$,
$t\in [0,T]$, $x=(x_1,x_2)\in \RR^2$, solves the initial value problem 
\begin{equation}\label{1dproblem4}
 \begin{array}{ll}
         & i\partial_t w(t,x_1,x_2)-H_{\frac{\rho}{4},c-\frac{\rho}{2}} w(t,x_1,x_2)=0,\quad (t,x)\in [0,T] \times \RR^2\\ \medskip
         & w(0,x_1,x_2)=f(x_1^2+x_2^2).  \end{array}  
\end{equation} 
Moreover, the initial data $(x_1,x_2)\mapsto f(x_1^2+x_2^2)$ belongs to $\mathcal H_{\frac{\alpha}{2},even}(\RR^2)$. 
\end{prop}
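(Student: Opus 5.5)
The plan is to verify the two claims of Proposition \ref{prop:Toft-i-mi} separately: first that $w$ solves \eqref{1dproblem4}, then that the initial datum $(x_1,x_2)\mapsto f(x_1^2+x_2^2)$ lies in $\mathcal H_{\alpha/2,even}(\RR^2)$. Both rest on the chain of substitutions $x=r^2$, $r=|x|$ already set up in this section.

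For the equation, I will use Proposition \ref{kvadrat}. Since $f\in\mathcal G_\alpha(\RR_+)$, the function $v(t,r)=u(t,r^2)$ solves \eqref{1dproblem3} with even initial value $g(r)=f(r^2)\in\mathcal H_{\alpha/2,even}(\RR)$. For each fixed $t$, the map $r\mapsto v(t,r)$ is even and smooth, and $w(t,\cdot)=\Phi(v(t,\cdot))$. Applying identity \eqref{radial} with $g$ replaced by $v(t,\cdot)$ gives $(J_{\rho,c}v(t,\cdot))(|x|)=H_{\frac{\rho}{4},c-\frac{\rho}{2}}w(t,x)$. Since $\partial_t$ commutes with the substitution $r=|x|$, it follows that $(i\partial_t-H_{\frac{\rho}{4},c-\frac{\rho}{2}})w(t,x)=\big((i\partial_t-J_{\rho,c})v\big)(t,|x|)=0$. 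The initial condition is immediate: $w(0,x)=v(0,|x|)=g(|x|)=f(x_1^2+x_2^2)$.

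The one delicate point in this step is the singular term $\frac{1}{r}\frac{d}{dr}$ in $J_{\rho,c}$ at $r=0$, i.e. at $x=0$. I will handle it by noting that $v(t,\cdot)$ is even, so $v'(t,0)=0$ and $v'(t,r)/r$ extends smoothly to $r=0$. Alternatively, \eqref{radial} holds on $\RR^2\setminus\{0\}$, and both sides are continuous, so it extends to $x=0$.

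For the regularity claim, I will apply Lemma \ref{radijalna1} to $g\in\mathcal H_{\alpha/2,even}(\RR)$. This gives $\Phi g=f(x_1^2+x_2^2)\in\mathcal H_{\alpha/2,radial}(\RR^2)$, and every radial function is even, so the initial datum lies in $\mathcal H_{\alpha/2,even}(\RR^2)$. A direct route is also available: expand $f=\sum a_n(f)l_n$ with $|a_n(f)|\le Ce^{-hn^{1/\alpha}}$ by Theorem \ref{glavnabeurling}, and use Lemma \ref{lm:lager-hermit-connection} to obtain Hermite coefficients supported on even indices and bounded by $C\sqrt\pi e^{-h(|m|/2)^{1/\alpha}}$.

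I expect the main obstacle to be justifying that the formal solution is regular enough for the pointwise identity \eqref{radial} to apply. For this I will take $u(t,\cdot)=e^{-itE_{\rho,c}}f$ in the range where Theorem \ref{cont. E} keeps it in a $P$-space; then $v(t,\cdot)\in\mathcal H_{\alpha/2,even}(\RR)$ is smooth by Proposition \ref{kvadrat}. Otherwise I will read the identity termwise on Laguerre/Hermite expansions, using $E l_n=nl_n$ and the correspondence of $l_n(x_1^2+x_2^2)$ with Hermite eigenfunctions of eigenvalue $2n+2$ for $H$ on $\RR^2$.
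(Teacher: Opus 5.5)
Your main argument is the paper's own: you use Proposition \ref{kvadrat} to pass to $v(t,r)=u(t,r^2)$ with $g\in\mathcal H_{\alpha/2,even}(\RR)$, apply the identity \eqref{radial} to $w(t,\cdot)=\Phi(v(t,\cdot))$, and use Lemma \ref{radijalna1} for the membership of the initial datum. Your remarks on the point $r=0$ and on the regularity of $u$ are extra care that the paper omits.

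There is one slip in your optional termwise fallback. The function $l_n(x_1^2+x_2^2)$ is a combination of $h_{2k}(x_1)h_{2n-2k}(x_2)$ with $|m|=2n$, so its $H$-eigenvalue is $4n+2$, not $2n+2$. The value $4n+2$ is exactly what gives $\frac{\rho}{4}(4n+2)+c-\frac{\rho}{2}=\rho n+c$, matching the Laguerre eigenvalue; with $2n+2$ that check would fail.
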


Before stating the converse of Proposition \ref{prop:Toft-i-mi}, we need some preparation.

Let $\modu{g}\in\mathcal H_{\frac{\alpha}{2},radial}(\RR^2)$, $\alpha>0$. By Lemma \ref{radial2} the function $g$ defined by 
$g=\Phi^{-1}(\modu{g})$
belongs to the  space $\ds \mathcal H_{\frac{\alpha}{2},even}(\RR)$. Moreover, from \cite[Theorem 5.5]{JPTV} it follows that 
the function $f$  defined by 
$f(x_1^2+x_2^2)=\modu{g}(x_1,x_2)$ belongs to the space $\mathcal G_{\alpha}(\RR_{+})$.

\par

By identifying $\modu{g}(x_1,x_2)$ with $f(x_1^2+x_2^2)$ in \eqref{1dproblem4}, 
we obtain the following initial value problem: 
\begin{equation}\label{1dproblem4+}
 \begin{array}{ll}
         & i\partial_t w(t,x_1,x_2)-H_{\rho,c} w(t,x_1,x_2)=0,\quad (t,(x_1,x_2))\in [0,T] \times \RR^2,\\
         & w(0,x_1,x_2)=\modu{g}(x_1,x_2)  \end{array}  
\end{equation} 
whose solution is given by
$\ds w(t,x)=e^{-itH_{\rho,c}}\modu{g}$. Moreover, from (\ref{operatorJ}) and (\ref{radial}) we obtain 
\begin{equation}\label{fg}
\left(E_{4\rho,c+2\rho} f \right )(x_1^2+x_2^2)
=\left(J_{4\rho,c+2\rho} g\right)
(\sqrt{x_1^2+x_2^2})
=\left(H_{\rho, c}\modu{g}\right)(x_1,x_2), 
\end{equation}
$(x_1, x_2) \in \RR ^2 $.

Next, we consider
\begin{equation}\label{J}
\mathcal{J}(\psi)=\psi|_{\RR^d}\circ \varrho,\quad \mathcal J^{-1}(\varphi)=\varphi\circ \rho,
\end{equation}
where  $\psi \in\mathcal H_{\alpha/2,even}(\RR^d)$, and
$\varphi\in \mathcal G_{\alpha}(\RR^d_{+})$, $\alpha > 0$, cf. \eqref{J-intro}.
By \cite[Section 3]{SSSB} and \cite[Corollaries  5.6 and 5.8]{JPTV} it follows that the mapping 
$$
\mathcal J:\mathcal H_{\alpha/2,even}(\RR^d)\to \mathcal G_{\alpha}(\RR^d_{+}), 
$$
is a topological isomorphism. 
We now rewrite  \eqref{JE} 
in terms of $\mathcal{J}$ and $\mathcal{J}^{-1}$:
\begin{equation}\label{eq:JE}
J_{\rho,c} \phi = \left(\mathcal J^{-1}\circ E_{\rho,c}\circ \mathcal{J}\right)\phi, \qquad 
\phi \in\mathcal{S}(\RR^d),
\end{equation}
with $J_{\rho,c}$ and $E_{\rho,c}$ given by \eqref{operatorJrc} and 
\eqref{eq:E_ro_ce} respectively. 

Moreover, by using \eqref{eq:Phi}, it follows that 
\eqref{fg} can be written as
\begin{equation}\label{eq:he}
H_{\rho,c}=\mathcal{J}^{-1}\circ \Phi\circ E_{4\rho,c+2\rho} \circ \mathcal{J}\circ \Phi^{-1}\quad \mbox{on}\quad  
H_{\alpha/2,radial} (\RR^d),
\quad \alpha>0.
\end{equation}

The following diagram illustrates the relationships between $\mathcal J$, $\Phi$ and their corresponding ranges.

\begin{center}
    \begin{tikzpicture}[>=stealth]
\node (A) at (0,2)
{$\mathcal H_{\alpha/2,\mathrm{even}}(\mathbb R)$};

\node (B) at (5,2)
{$\mathcal H_{\alpha/2,\mathrm{radial}}(\mathbb R^2)$};

\node (C) at (2.5,0)
{$\mathcal G_{\alpha}(\mathbb R_{+})$};

\draw[->] (A) -- node[above] {$\Phi$} (B);
\draw[->] (A) -- node[left] {$\mathcal J$} (C);
\draw[->] (B) -- node[right] {$\mathcal J \circ \Phi^{-1}$} (C);
\end{tikzpicture}
\end{center}

Now the we have the following.

\begin{prop} \label{rot-inv-novo}
Let 
$\modu{g}\in \mathcal H_{\frac{\alpha}{2},even}(\RR^2)$, $\alpha > 0$,
be  a radially symmetric function. 
Let $w(t,x)=w(t,x_1,x_2)$ denote the solution of the initial value problem (\ref{1dproblem4+}) 
with initial value $\modu{g}$. Then the function $u$  defined by 
$$
u(t,x_1^2+x_2^2)=w(t,x_1,x_2), \qquad (t,  (x_1,x_2))
\in  [0,T]\times\RR^2,
$$ 
solves the initial value problem 
\begin{equation}
\label{1dproblemE}
 \begin{array}{ll}
         & i\partial_t u(t,x)-E_{4\rho,c+2\rho} u(t,x)=0,\quad (t,x)\in [0,T] \times \RR_+\\
         & u(0,x)=\modu{g}(x,0).  \end{array} 
\end{equation} 
with the initial value  $x\mapsto \modu{g}(\sqrt{x},0) \in \mathcal G_{\alpha}(\RR_{+}).$

\end{prop}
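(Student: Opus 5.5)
The plan is to reverse the chain of substitutions used for Proposition \ref{prop:Toft-i-mi}, relying on the identity \eqref{fg} and on the fact that $e^{-itH_{\rho,c}}$ preserves radial symmetry.

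\emph{Step 1: the solution stays radial.} By \eqref{eq:HarmOscProp}, $w(t,\cdot)=e^{-itH_{\rho,c}}\modu{g}$. The operator $H=-\Delta+|x|^2$ commutes with every $A\in O(2)$. Hence $e^{-itH_{\rho,c}}$ commutes with $f\mapsto f\circ A$. I would justify this on the level of Hermite expansions: the eigenspaces $\operatorname{span}\{h_n: |n|=k\}$ are $O(2)$-invariant, and the propagator acts on each of them as the scalar $e^{-2it\rho(k+1+c/\rho)}$. Remark \ref{orthogonal} then gives that $w(t,\cdot)$ is radial, and in particular even, for every $t$. Under the growth hypotheses of Theorem \ref{cont. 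E}/\cite{TBM} it also stays in $\mathcal H_{\alpha/2,radial}(\RR^2)$. Consequently $u$ is well defined: set $u(t,s)=w(t,\sqrt{s},0)$ for $s\ge0$. Then $u(t,x_1^2+x_2^2)=w(t,x_1,x_2)$ by radiality.

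\emph{Step 2: the equation.} Write $v(t,r)=w(t,r,0)$, so that $w(t,\cdot)=\Phi(v(t,\cdot))$. Apply \eqref{radial} with $(\rho,c)$ replaced by $(4\rho,c+2\rho)$, which is exactly \eqref{fg}:
$$(H_{\rho,c}w(t,\cdot))(x)=(J_{4\rho,c+2\rho}v(t,\cdot))(|x|)=(E_{4\rho,c+2\rho}u(t,\cdot))(x_1^2+x_2^2).$$
The middle term is converted to the $E$-form through \eqref{JE}. Since $\partial_t w(t,x)=\partial_t u(t,x_1^2+x_2^2)$, the equation $(i\partial_t-H_{\rho,c})w=0$ becomes $(i\partial_t-E_{4\rho,c+2\rho})u(t,s)=0$ at every $s=x_1^2+x_2^2>0$, that is, on all of $\RR_+$. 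At $t=0$ we get $u(0,x)=w(0,\sqrt x,0)=\modu{g}(\sqrt x,0)$. (The statement's $\modu{g}(x,0)$ should be read as $\modu{g}(\sqrt{x},0)$, consistent with the final sentence.)

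\emph{Step 3: the regularity of the initial value.} By Lemma \ref{radial2}, $g=\Phi^{-1}\modu{g}$, namely $g(r)=\modu{g}(r,0)$, lies in $\mathcal H_{\alpha/2,even}(\RR)$. Then $\mathcal J g=g\circ\varrho\in\mathcal G_\alpha(\RR_+)$ by the topological isomorphism $\mathcal J$ from \cite[Corollaries 5.6, 5.8]{JPTV}, equivalently by the diagram. This function is exactly $x\mapsto\modu{g}(\sqrt x,0)$.

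The main obstacle is Step 2, specifically the smoothness needed to apply \eqref{radial} and \eqref{JE} pointwise. The $\frac1r\frac{d}{dr}$ term is singular at $r=0$, and one must know that $u(t,\cdot)$ is smooth up to $0$. I would first try to sidestep the pointwise calculus by working at the level of coefficients: expand $u(0,\cdot)=\sum a_n l_n$ and use \eqref{lagfor2}, which shows that $l_n(x_1^2+x_2^2)$ lies in the $H$-eigenspace with eigenvalue $4n+2$. Then $e^{-itH_{\rho,c}}$ multiplies this piece by $e^{-it(4\rho n+2\rho+c)}=e^{-it(4\rho\,n+c+2\rho)}$, which is precisely the factor by which $e^{-itE_{4\rho,c+2\rho}}$ multiplies $l_n$. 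Summing over $n$, with the absolute convergence from Lemma \ref{radijalna1}, identifies $w(t,x)$ with $(e^{-itE_{4\rho,c+2\rho}}u_0)(x_1^2+x_2^2)$ directly.
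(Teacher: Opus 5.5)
Your proof is correct, but it takes a different route from the paper's. The paper never checks the equation pointwise. It exponentiates the generator identity \eqref{eq:he} to obtain the propagator conjugation \eqref{eq:J-Phi-H-E}. Since the solution of \eqref{1dproblemE} is by definition $u(t,\cdot)=e^{-itE_{4\rho,c+2\rho}}f$ with $f=(\mathcal J\circ\Phi^{-1})\modu{g}$, the radiality of $w(t,\cdot)$ and the identity $w(t,x_1,x_2)=u(t,x_1^2+x_2^2)$ follow at once.

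You instead obtain radiality separately, from the $O(2)$-invariance of the Hermite eigenspaces, and verify the equation classically via the chain rule and \eqref{fg}. This classical check needs $w$ to be a genuine solution. That holds, e.g., for real $\rho$ or for $\alpha<1$, but not in general for $\operatorname{Im}\rho>0$, $\alpha\geq 1$. Your caveat on this point is legitimate and more careful than the paper's proof, which asserts $w(t,\cdot)\in\mathcal H_{\alpha/2,radial}(\RR^2)$ for every $t$ without restriction.

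Your coefficient-level fallback via \eqref{lagfor2} is the most robust of your arguments:
\begin{itemize}
\item it is exactly a proof of \eqref{eq:J-Phi-H-E} on radial data, which is the step the paper takes without comment when passing from generators to exponentials;
\item it remains meaningful for formal expansions when absolute convergence fails;
\item it explains the parameters $4\rho$ and $c+2\rho$ through the eigenvalue $4n+2$ of $H$ on $l_n(x_1^2+x_2^2)$.
\end{itemize}
Your factor $e^{-it(4\rho n+2\rho+c)}$ is the correct one for $H_{\rho,c}=\rho H+c$. By contrast, \eqref{eq:HarmOscProp} as printed has $c/\rho$ where $c/(2\rho)$ is needed, so it should not be quoted literally.

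Your reading of the initial value as $\modu{g}(\sqrt{x},0)$ is right, and your Step 3 coincides with the paper's argument.
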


\begin{proof}

Fix $t\in [0,T]$, and put $r=\sqrt{x_1^2+x_2^2}$, 
$x_1,x_2\in\RR$. 
By \cite[(1.37)]{TBM} the solution $w(t,x)$ of the initial value problem (\ref{1dproblem4+}) is calculated as
$w(t,x_1,x_2)=
\left(e^{-itH_{\rho,c}}\modu{g}\right)$. 
From  \eqref{eq:he} we get 
$$
H_{\rho,c}=(\mathcal{J}\circ \Phi^{-1})^{-1}\circ E_{4\rho,c+2\rho} \circ \mathcal{J}\circ \Phi^{-1},
 $$
so that 
\begin{equation}\label{eq:J-Phi-H-E}
e^{-itH_{\rho,c}}=(\mathcal{J}\circ \Phi^{-1})^{-1}\circ e^{-itE_{4\rho,c+2\rho}}\circ  \mathcal{J}\circ \Phi^{-1} .\end{equation}
We also note that 
\begin{equation} \label{eq:J^{-1}-Phi-commute}
\mathcal{J}^{-1}\circ \Phi=\Phi\circ\mathcal{J}^{-1}\quad \text{on}\quad  \mathcal G_{\alpha}(\RR),\quad \alpha>0. 
\end{equation}
Let $g=\Phi^{-1}(\modu{g})\in \mathcal H_{\alpha/2,even}(\RR)$, $f=\mathcal{J}g\in \mathcal G_{\alpha}(\RR_{+}).$
The solution of the initial value problem \eqref{1dproblemE} is given by $u(t,\cdot)=e^{-itE_{4\rho,c+2\rho}}f.$
Now we have 
$$e^{-itH_{\rho,c}}\modu{g}=\left((\mathcal{J}\circ \Phi^{-1})^{-1}\circ e^{-itE_{4\rho,c+2\rho}}\right)f.$$
Since
$$\Phi\left( (\mathcal{J}\circ e^{-itE_{4\rho,c+2\rho}}) f\right)= w(t,\cdot),$$
it follows that
$
e^{-itH_{\rho,c}}\modu{g}\in \mathcal H_{\alpha/2,radial}(\RR^2)$,  for every $t\in [0,T]$, 
and 
$$(\mathcal{J}\circ \Phi^{-1})w(t,\cdot)= u(t,\cdot), \quad \text{i.e.} \quad 
w(t,x_1,x_2)=u(t,x_1^2+x_2^2),  \qquad (x_1,x_2)\in\RR^2.
$$
\end{proof}

\begin{cor}
 When  $ \operatorname{Im} \rho >0$ and $\alpha\geq 1$,  Remark \ref{ill} claims that the initial value problem (\ref{1dproblem2}) is ill posed.  Moreover, the initial value problem (\ref{1dproblem4}) is also ill posed in view of \cite[Corollary 4.11]{TBM}. When $\alpha\in (0,1)$ both problems are well posed.  
\end{cor}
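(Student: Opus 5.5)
The plan is to reduce both claims to statements about sequences of Hermite/Laguerre coefficients, and then to transport them through the isomorphisms $\mathcal J$ and $\Phi$ using Proposition \ref{rot-inv-novo}.

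For (\ref{1dproblem2}) with $\operatorname{Im}\rho>0$ and $\alpha\geq 1$, Remark \ref{ill} gives ill-posedness directly. Write $e^{-itE_{\rho,c}}=e^{-it\rho E_{c/\rho}}$ and note $\operatorname{Re}(-it\rho)=t\operatorname{Im}\rho>0$ for $t>0$. Theorem \ref{thm:propagator_T} (iii), with $r=1$ and $d=1$, then shows that $T_{1,\rho,c}$ is not even well defined from $\mathcal G_\alpha(\RR_+)$ into $L^1([0,T];\mathcal G'_\alpha(\RR_+))$. Concretely, the coefficient sequence $a_n=e^{-(1+n)^{1/\alpha}}$ from the proof of Theorem \ref{discon E} gives a datum whose evolved coefficients grow like $e^{t\operatorname{Im}(\rho)n}$. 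For (\ref{1dproblem4}), the operator is $H_{\rho/4,c-\rho/2}$ with $\operatorname{Im}(\rho/4)>0$, and the initial data lie in $\mathcal H_{\alpha/2,even}(\RR^2)$ with $\alpha/2\geq 1/2$. I would cite \cite[Corollary 4.11]{TBM} for ill-posedness of the harmonic oscillator propagator in this Pilipovi\'c space.

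The main obstacle is that the TBM counterexample need not be radial or even. I would therefore transport our counterexample. Take $f=\sum_n e^{-(1+n)^{1/\alpha}}l_n\in\mathcal G_\alpha(\RR_+)$ and set $\modu g(x_1,x_2)=f(x_1^2+x_2^2)$. By Lemma \ref{radijalna1} and Proposition \ref{Phi}, $\modu g\in\mathcal H_{\alpha/2,radial}(\RR^2)$. Formula (\ref{eq:J-Phi-H-E}) (equivalently Proposition \ref{prop:Toft-i-mi}) expresses the evolved solution as $(\mathcal J\circ\Phi^{-1})^{-1}e^{-itE}f$. Lemma \ref{lm:lager-hermit-connection} gives its Hermite coefficients explicitly, $b_m=(-1)^{|m|/2}a_{|m|/2}(e^{-itE}f)c_{m_1/2,m_2/2}$. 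Since $c_{n,0}$ is of order $1$ (bounded below by a fixed power of $n$), the growth $e^{t\operatorname{Im}(\rho)n}$ survives. The evolved function therefore fails to lie in $\mathcal H'_{\alpha/2}(\RR^2)$, which again shows ill-posedness.

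For $\alpha\in(0,1)$, Theorem \ref{thm:propagators_T and W} with $r=1$ gives continuity of $T_{1,\rho,c}$ into $L^1([0,T];\mathcal G_\alpha(\RR_+))$. The argument is that $e^{T\operatorname{Im}(\rho)n}$ is absorbed by $e^{-hn^{1/\alpha}}$ because $1/\alpha>1$. For (\ref{1dproblem4}), the radial solution is obtained by conjugating with the topological isomorphism $\mathcal J\circ\Phi^{-1}$, as in Proposition \ref{rot-inv-novo}, so continuity transfers. Alternatively, one can cite \cite{TBM} directly, since $\alpha/2<1/2$ puts us in the regime below the Gelfand--Shilov threshold where that paper proves well-posedness.
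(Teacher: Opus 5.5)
For \eqref{1dproblem2} and for the well-posed range $\alpha\in(0,1)$ you argue as the paper does. The corollary has no separate proof: it rests on Remark \ref{ill}, i.e.\ Theorems \ref{thm:propagator_T} and \ref{thm:propagators_T and W} with $r=1$, and on the transfer in Propositions \ref{prop:Toft-i-mi} and \ref{rot-inv-novo}.

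The real difference is the ill-posedness of \eqref{1dproblem4}. The paper only invokes \cite[Corollary 4.11]{TBM}. You notice that ill-posedness on all of $\mathcal H_{\alpha/2}(\RR^2)$ does not by itself give ill-posedness for data of the form $f(x_1^2+x_2^2)$. Well-posedness restricts to the closed radial subspace, but ill-posedness need not, since a non-radial counterexample could lose its growing modes under spherical averaging. You therefore build a radial counterexample by pushing the Laguerre one through Lemma \ref{lm:lager-hermit-connection}. Your route is self-contained and shows that both ill-posedness phenomena are the same coefficient growth seen in two coordinate systems. The paper's route is shorter but depends on an external result and leaves the restriction to radial data implicit.

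Two points need tightening.

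\emph{The endpoint $\alpha=1$.} Your datum $a_n=e^{-(1+n)^{1/\alpha}}=e^{-(1+n)}$ does not produce growth like $e^{t\operatorname{Im}(\rho)n}$, only like $e^{(t\operatorname{Im}\rho-1)n}$. So it is not a counterexample when $T\operatorname{Im}\rho\le 1$, and this affects both your illustration and your radial counterexample. Take instead $a_n=e^{-\varepsilon n}$ with $0<\varepsilon<T\operatorname{Im}\rho$. This lies in $\mathcal G_1(\RR_+)$, and its evolution leaves $\mathcal G_1'(\RR_+)$ for every $t\in(\varepsilon/\operatorname{Im}\rho,T]$, a set of positive measure.

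\emph{The transport step.} Identity \eqref{eq:J-Phi-H-E} holds on test functions. Applying $(\mathcal J\circ\Phi^{-1})^{-1}$ to $e^{-itE_{\rho,c}}f$, which has exponentially growing coefficients, is therefore only formal. It is cleaner to compute on the Hermite side directly:
\begin{itemize}
\item In $d=2$ one has $H_{\frac{\rho}{4},c-\frac{\rho}{2}}h_m=(\frac{\rho|m|}{2}+c)h_m$.
\item Hence the harmonic oscillator propagator multiplies the coefficient of $h_{2n}\otimes h_0$ by $e^{-it(\rho n+c)}$, the same multiplier $e^{-itE_{\rho,c}}$ puts on $l_n$.
\item In \eqref{lagfor2} only $l_n$ contributes to that mode. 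So the corresponding coefficient of the evolved datum is $\pm a_n(f)e^{-it(\rho n+c)}\sqrt{\pi}\,2^{-n}\binom{2n}{n}^{1/2}$.
\item The last factor behaves like $\pi^{1/4}n^{-1/4}$. It is not of order $1$, but it is bounded below by a power of $n$, which is all you need.
\end{itemize}
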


\subsection{The problem with distributional initial values} \label{subsec:5.2}

Next, we discuss the situation when the initial value of the problem 
\eqref{1dproblem2}
belongs to a suitable space of ultradistributions. To that end, we need a proper interpretation of 
the change of variables when considering the dual spaces. 

Recall that $\mathcal J$ given by \eqref{J} defines a topological isomorphism between 
$\mathcal H_{\alpha/2,even}(\RR^d)$ and $\mathcal G_{\alpha}(\RR^d)$,
and between $\mathcal H_{0,\alpha/2,even}$ and $ \mathcal G_{0,\alpha}(\RR^d) $.
Also, its transposed counterpart gives rise to topological isomorphisms on the corresponding dual spaces, i.e. 
$$\mathcal J':\mathcal G'_{\alpha}(\RR^d)\rightarrow \mathcal H'_{\alpha/2,even}(\RR^d) , 
\quad \text{and} \quad
\mathcal J':\mathcal G'_{0,\alpha}(\RR^d) \rightarrow \mathcal H'_{0,\alpha/2,even}(\RR^d). 
$$ 
Recall that
$$
J_{\rho,c} \phi = \left(\mathcal J^{-1}\circ E_{\rho,c}\circ \mathcal{J}\right)\phi, \qquad 
\phi \in\mathcal{S}(\RR^d),
$$
with $J_{\rho,c}$ and $E_{\rho,c}$ given by \eqref{operatorJrc} and 
\eqref{eq:E_ro_ce} respectively.

The continuity properties of $E_{\rho,c}$  and 
$ J_{\rho,c}$, $\rho\in\mathbb C \setminus \{ 0\}$, $ c \in \mathbb C$,
imply the following result, see also \cite[Theorem 3.4]{SSSB}.

\begin{lem} Let there be given $\rho\in\mathbb C \setminus \{ 0\}$, $ c \in \mathbb C$, and $\alpha>0$. Then the mapping 
$$
E_{\rho,c}:\mathcal G_{\alpha}(\RR^d)\to \mathcal G_{\alpha}(\RR^d) \qquad   (E_{\rho,c}:\mathcal G_{0,\alpha}(\RR^d)\to \mathcal G_{0,\alpha}(\RR^d))
$$ 
is continuous if and only if 
$$
J_{\rho,c}:\mathcal H_{\alpha/2,even}(\RR^d)\to \mathcal H_{\alpha/2,even}(\RR^d) \qquad (J_{\rho,c}:\mathcal H_{0,\alpha/2,even}(\RR^d)\to \mathcal H_{0,\alpha/2,even}(\RR^d))
$$ 
is continuous.
Moreover, the transposed mapping
$$
J'_{\rho,c}:\mathcal H'_{\alpha/2,even}(\RR^d)\to \mathcal H'_{\alpha/2,even}(\RR^d) \;  (
J'_{\rho,c}:\mathcal H'_{0,\alpha/2,even}(\RR^d)\to \mathcal H'_{0,\alpha/2,even}(\RR^d)) $$ 
is  continuous as well.
\end{lem}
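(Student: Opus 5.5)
The plan is to transport continuity through the conjugation identity \eqref{eq:JE}, $J_{\rho,c}=\mathcal J^{-1}\circ E_{\rho,c}\circ\mathcal J$, using that $\mathcal J:\mathcal H_{\alpha/2,even}(\RR^d)\to\mathcal G_{\alpha}(\RR^d_+)$ (and its Beurling counterpart) is a topological isomorphism. The first step is to make sure \eqref{eq:JE} holds on the whole space $\mathcal H_{\alpha/2,even}(\RR^d)$ and not only on smooth functions with a pointwise calculation. For $\phi\in\mathcal H_{\alpha/2,even}(\RR^d)$ we have $\mathcal J\phi\in\mathcal G_\alpha(\RR^d_+)\subset\SSS(\RR^d_+)$. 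The pointwise identity \eqref{JE}, applied coordinatewise in each variable $x_j=r_j^2$ in $d$ dimensions, then gives $(J_{\rho,c}\phi)(r)=(E_{\rho,c}\mathcal J\phi)(\rho(r))$ for $r$ with nonzero coordinates, and by continuity for all $r$. Since $E_{\rho,c}$ maps $\mathcal G_\alpha(\RR^d_+)$ into itself by Theorem~\ref{cont. E}(i) with $r=1$ (it acts as multiplication of the Laguerre coefficients by $\rho(|n|+c/\rho)$), the right-hand side lies in $\mathcal J^{-1}(\mathcal G_\alpha(\RR^d_+))=\mathcal H_{\alpha/2,even}(\RR^d)$. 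So $J_{\rho,c}$ maps $\mathcal H_{\alpha/2,even}(\RR^d)$ into itself, and \eqref{eq:JE} holds there.

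With this identity the equivalence is formal. If $E_{\rho,c}$ is continuous on $\mathcal G_\alpha(\RR^d_+)$, then $J_{\rho,c}=\mathcal J^{-1}E_{\rho,c}\mathcal J$ is a composition of continuous maps. Conversely, $E_{\rho,c}=\mathcal J J_{\rho,c}\mathcal J^{-1}$ on $\mathcal G_\alpha(\RR^d_+)$, so continuity of $J_{\rho,c}$ gives continuity of $E_{\rho,c}$. The Beurling case is identical, using the isomorphism $\mathcal J:\mathcal H_{0,\alpha/2,even}\to\mathcal G_{0,\alpha}$. In fact Theorem~\ref{cont. E}(i) shows that both conditions hold, which will be needed in the last step.

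For the transpose, I will use the standard fact that a continuous linear map $A$ on a locally convex space $X$ has a transpose $A'$ that is continuous on the strong dual $X'$, since $A$ maps bounded sets to bounded sets. Applying this to $X=\mathcal H_{\alpha/2,even}(\RR^d)$ with $A=J_{\rho,c}$ gives continuity of $J'_{\rho,c}$ on $\mathcal H'_{\alpha/2,even}(\RR^d)$. Alternatively, one can write $J'_{\rho,c}=\mathcal J' E'_{\rho,c}(\mathcal J^{-1})'$, where $\mathcal J'$ is the isomorphism of the dual spaces recalled above and $E'_{\rho,c}$ acts diagonally on Laguerre coefficients of $\mathcal G'_\alpha$ by \eqref{dualexp}, so it is continuous by Theorem~\ref{cont. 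E}(i).

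The main obstacle is the first step: justifying that the formal differential identity \eqref{JE} (written there for $d=1$) extends to the tensor-product $d$-dimensional operator. This extension must hold for all elements of the even Pilipovi\'c space, including the limiting values at $r_j=0$, where the term $\frac1{r}\frac{d}{dr}$ appears. I expect to handle this through evenness: $\phi'(r)/r$ extends smoothly at $0$ for even smooth $\phi$. The alternative is to verify the identity on the dense set of finite Laguerre combinations, where $\mathcal J^{-1}l_n$ is an even Hermite-type polynomial times a Gaussian, and then extend it by continuity of both sides.
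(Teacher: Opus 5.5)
Your proposal is correct and follows essentially the paper's route. The paper gives no separate argument beyond recalling that $\mathcal J$ and $\mathcal J'$ are topological isomorphisms and that $J_{\rho,c}=\mathcal J^{-1}\circ E_{\rho,c}\circ\mathcal J$ by \eqref{eq:JE}, so continuity transfers by conjugation and then passes to the strong duals by transposition. Your extra care in extending \eqref{JE} from $d=1$, and from points with nonzero coordinates to the whole even space (using evenness so that $\partial_{r_j}\phi/r_j$ extends smoothly), fills in a detail the paper leaves implicit.
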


Since we are interested in the dual counterpart of  \eqref{JE}, we proceed below with $d=1$.

Let there be given $t\in\RR\setminus\{0\}$. From \eqref{eq:JE} 
and the fact that  the operator $E_{\rho,c}$ is self-adjoint,
we conclude that
\begin{equation*}\label{eq:ej}
e^{-itJ_{\rho,c}}=\mathcal{J}^{-1}\circ e^{-itE_{\rho,c}}\circ \mathcal {J} \end{equation*}
on  $\mathcal H_{\alpha/2}(\RR)$ and $\mathcal H_{0,\alpha/2}(\RR)$, 
with the dual counterpart
\begin{equation*}\label{eq:ejdual}
e^{-itJ'_{\rho,c}}=\mathcal{J'}\circ e^{-itE_{\rho,c}}\circ \mathcal {(J'})^{-1} 
\end{equation*}
on $\mathcal H'_{\alpha/2}(\RR)$   and $\mathcal H'_{0,\alpha/2}(\RR)$.

 The solution  of the Cauchy problem \eqref{1dproblem2} with the initial value $f\in\mathcal G'_{\alpha}(\RR_{+})$ is formally given by 
 $u(t, \cdot)=e^{-itE_{\rho,c}}f$, $ t \in [0,T]$. 
Note that $g=\mathcal{J}'f\in \mathcal H'_{\alpha/2,even}(\RR)$. 
Now let $v_t (\cdot) = v(t,\cdot )=e^{-itJ'_{\rho,c}}\left(\mathcal{J}'f\right)$,
$t \in [0,T]$. Thus we obtain the family of generalized functions  
$$
v_t (\cdot) = v(t,\cdot )=e^{-itJ'_{\rho,c}}\left(\mathcal{J}'f\right)=\left(\mathcal{J'}\circ e^{-itE_{\rho,c}}\right)f
$$ 
wherefrom
$
(\mathcal{J'})^{-1} v(t,\cdot ) = e^{-itE_{\rho,c}}f=u(t,\cdot ),$
so that the family of ultradistributions $v_t(\cdot)=v(t,\cdot)=\mathcal{J}'u(t,\cdot) $ 
solves the initial value problem 
\begin{equation}
\label{1dproblem5}
 \begin{array}{ll}
         & i\partial_t v(t,r)-J'_{\rho, c} v(t,\cdot )=0, \quad t\in[0,T]
         \\
         & v(0,\cdot )=g, \end{array}  
\end{equation}

Therefore the following is true.

\begin{prop} \label{prop:dist-in-value-1}
Let $\alpha>0$, $\rho\in\CC\setminus\{0\},c\in\mathbb C$. 
If  the family of ultradistributions $u(t,\cdot)$ solves the  Cauchy problem \eqref{1dproblem2} with the initial data $f\in\mathcal G'_{\alpha}(\RR_{+})$, ($f\in\mathcal G'_{\alpha}(\RR_{+})$) then the family of ultradistributions $v(t,\cdot)=\mathcal{J}'u(t,\cdot)$ solves the initial value problem \eqref{1dproblem5} with the initial value $g=\mathcal{J}'f\in\mathcal H'_{\alpha/2,even}(\RR)$  ($g=\mathcal{J}'f\in\mathcal H'_{0,\alpha/2,even}(\RR)$).

If, on the other hand, the family of ultradistributions $v(t,\cdot)$ solves the Cauchy problem \eqref{1dproblem5} with the initial datum $g\in \mathcal H'_{\alpha/2,even}(\RR) $  ($g\in \mathcal H'_{0,\alpha/2,even}(\RR) $) then the family of ultradistributions $u(t,\cdot)=({\mathcal J'})^{-1}v(t,\cdot)$ solves the Cauchy problem \eqref{1dproblem2} with the initial data $f\in ({\mathcal J'})^{-1}g\in \mathcal G_{\alpha}(\RR_{+}) $ (in the space $\mathcal G_{0,\alpha}(\RR_{+}) )$.
\end{prop}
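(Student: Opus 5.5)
The proof is essentially an intertwining argument based on the relation $e^{-itJ'_{\rho,c}}=\mathcal{J}'\circ e^{-itE_{\rho,c}}\circ(\mathcal J')^{-1}$. We establish this identity rigorously on the dual spaces and then read off both directions. Throughout we treat the Roumieu case; the Beurling case is identical with $\mathcal G_{0,\alpha}$ and $\mathcal H_{0,\alpha/2,even}$ in place of $\mathcal G_\alpha$ and $\mathcal H_{\alpha/2,even}$.

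\textbf{Step 1: the intertwining identity on the duals.} Since $\mathcal J:\mathcal H_{\alpha/2,even}(\RR)\to\mathcal G_\alpha(\RR_+)$ is a topological isomorphism, its transpose $\mathcal J':\mathcal G'_\alpha(\RR_+)\to\mathcal H'_{\alpha/2,even}(\RR)$ is one as well, with $(\mathcal J')^{-1}=(\mathcal J^{-1})'$. Conjugating the test-function relation \eqref{eq:JE} gives, for each $t$, the identity $e^{-itJ_{\rho,c}}=\mathcal J^{-1}\circ e^{-itE_{\rho,c}}\circ\mathcal J$ on the test level. Taking transposes yields
\[
(e^{-itJ_{\rho,c}})'=\mathcal J'\circ(e^{-itE_{\rho,c}})'\circ(\mathcal J^{-1})' .
\]
We define $e^{-itJ'_{\rho,c}}$ to be this transpose. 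By \eqref{dualexp}, which expresses self-adjointness with respect to the Laguerre pairing, $(e^{-itE_{\rho,c}})'$ coincides on $\mathcal G'_\alpha(\RR_+)$ with the coefficientwise operator \eqref{eq:general_propagator} (with $r=1$). Substituting this into the displayed identity gives the claimed intertwining relation.

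\textbf{Step 2: the forward direction.} Suppose $u(t,\cdot)=e^{-itE_{\rho,c}}f$ solves \eqref{1dproblem2} and set $v(t,\cdot)=\mathcal J'u(t,\cdot)$. By Step 1, $v(t,\cdot)=e^{-itJ'_{\rho,c}}\mathcal J'f$, and $v(0,\cdot)=\mathcal J'f=g$. To verify the equation in \eqref{1dproblem5}, we differentiate in $t$ weakly by pairing with a test function $\psi\in\mathcal H_{\alpha/2,even}(\RR)$:
\[
\langle v(t),\psi\rangle=\langle u(t),\mathcal J\psi\rangle .
\]
Since $\mathcal J'$ is a continuous linear map independent of $t$, it commutes with $\partial_t$. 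Using $i\partial_t u=E_{\rho,c}u$ in $\mathcal G'_\alpha(\RR_+)$ together with $J'_{\rho,c}=\mathcal J'\circ E_{\rho,c}\circ(\mathcal J')^{-1}$, which is the transpose of \eqref{eq:JE} combined with the self-adjointness of $E_{\rho,c}$, we obtain $i\partial_t v=J'_{\rho,c}v$.

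\textbf{Step 3: the converse.} Here we apply $(\mathcal J')^{-1}$ and repeat the same computation in reverse. The hypothesis $f\in\mathcal G'_\alpha(\RR_+)$ should be read as $f=(\mathcal J')^{-1}g\in\mathcal G'_\alpha(\RR_+)$; the statement says $\mathcal G_\alpha$, but it must mean the dual space.

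\textbf{Main obstacle.} The delicate point is the meaning of $\partial_t$ for ultradistribution-valued families. For the dual equations $E_{\rho,c}$ and $J'_{\rho,c}$ must be understood as transposes. We then need the coefficientwise derivative of $e^{-it(\rho n+c)}a_n(f)$ to converge in $\mathcal G'_\alpha$. This holds because, for $t$ in the compact interval $[0,T]$, multiplication by $(\rho n+c)e^{-it(\rho n+c)}$ satisfies
\[
\bigl|(\rho n+c)e^{-it(\rho n+c)}\bigr|\le C\,(1+n)\,e^{T|\rho|n} .
\]
For $\alpha<1$ this factor preserves the defining bounds $|a_n|\le C_h e^{h n^{1/\alpha}}$ for all $h>0$, so we restrict to the range of $\alpha$ allowed by Theorem \ref{cont. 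E}, or to $\operatorname{Im}\rho=0$ for general $\alpha$, and take $u$ to be the formal solution there.
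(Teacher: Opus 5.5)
Your proposal is correct and follows essentially the paper's argument. The paper also transposes \eqref{eq:JE} and uses the self-adjointness of $E_{\rho,c}$ (i.e.\ \eqref{dualexp}) to obtain $e^{-itJ'_{\rho,c}}=\mathcal J'\circ e^{-itE_{\rho,c}}\circ(\mathcal J')^{-1}$, then reads off $v(t,\cdot)=\mathcal J'u(t,\cdot)$. Your weak-derivative check is a more careful rendering of that step, and you correctly read the target space in the converse as $\mathcal G'_{\alpha}(\RR_+)$.

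The restriction on $\alpha$ or $\operatorname{Im}\rho$ in your last paragraph is not needed for this conditional statement, since it only concerns existence of the formal solution. Your Step 2 uses only the generator identity $J'_{\rho,c}=\mathcal J'\circ E_{\rho,c}\circ(\mathcal J')^{-1}$. That identity holds for every $\alpha>0$, because $E_{\rho,c}$ is continuous on $\mathcal G'_{\alpha}(\RR_+)$ by Theorem \ref{cont. E}(i).
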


Finally, we  present dual counterparts of Propositions  \ref{prop:Toft-i-mi} and \ref{rot-inv-novo}. Let $\rho>0$. 
Then the dual counterpart of \eqref{eq:he} is given by


\begin{equation*}\label{eq:he-dual}
H_{\rho,c}=(\Phi')^{-1}\circ \mathcal{J}'\circ E_{4\rho,c+2\rho} \circ \Phi'\circ (\mathcal{J'})^{-1} \quad \mbox{on}
\quad
\mathcal G'_{0,\alpha}(\RR_{+}), \alpha>0.
\end{equation*}
Note that for $f\in \mathcal{S}(\RR_{+})$ and $ r>0$ we have
$$
\left((\mathcal{J}^{-1}\circ \Phi\right)f)(r)= (\mathcal{J}^{-1}f)(\sqrt{x_1^2+x_2^2})=f(x_1^2+x_2^2)=\left( (\Phi\circ\mathcal{J}^{-1})f\right)(r),
$$
cf. \eqref{eq:J^{-1}-Phi-commute},
which also holds for the transpose mappings, and therefore 
\begin{equation}\label{eq:komutativnost-dual}
   (\mathcal{J}')^{-1}\circ \Phi'=\Phi'\circ(\mathcal{J})'^{-1}\ \mbox{on}\  \mathcal G'_{\alpha}(\RR_{+}),\ 
(\text{ and on} \;\;\;  \mathcal G'_{0,\alpha}(\RR_{+})),  \;\; \;\alpha>0 . 
\end{equation}
Let us consider the problem \eqref{1dproblem2} with the initial value $f\in \mathcal  G'_{\alpha}(\RR_{+})$ and the (formal) 
solution $u(t,\cdot)=e^{-it H_{\rho,c}}f$, $ t \in [0,T]$. It now follows that $g=\mathcal{J}'f\in \mathcal H'_{\alpha/2,even}(\RR)$.
From  Proposition \ref{prop:phi'} and commutativity relation
\eqref{eq:komutativnost-dual} 
we conclude that
$$
\modu{g}=\left((\Phi')^{-1} \circ \mathcal{J}'\right)f=\left(\mathcal{J}'\circ (\Phi')^{-1}  \right)f
\in \mathcal H'_{\alpha/2,radial}(\RR^2).
$$
Let $w(t,\cdot)=e^{-itH_{\frac{\rho}{4},c-\frac{\rho}{2}} }\modu{g} $,
$t\in [0,T]$.
 Since 
 $$
 H_{\frac{\rho}{4},c-\frac{\rho}{2}}=((\mathcal{J}^{-1})'\circ \Phi')^{-1} \circ E_{\rho,c} \circ ((\mathcal{J}^{-1})'\circ \Phi')
 $$
 we get
 $$
 e^{-itH_{\frac{\rho}{4},c-\frac{\rho}{2}} }=(\Phi')^{-1}\circ \mathcal{J}'\circ e^{-itE_{\rho,c}}\circ \Phi'\circ (\mathcal{J'})^{-1}, 
 $$
 and therefore 
\begin{eqnarray*}
w(t,\cdot) & = & e^{-itH_{\frac{\rho}{4},c-\frac{\rho}{2}} }\modu{g}=\left((\Phi')^{-1}\circ \mathcal{J}'\circ e^{-itE_{\rho,c}}\circ \Phi'\circ (\mathcal{J'})^{-1}\circ \mathcal{J}'\circ (\Phi')^{-1}\right)f \\
& = & 
\left((\Phi')^{-1}\circ \mathcal{J}'\circ e^{-itE_{\rho,c}}\right)f.
\end{eqnarray*}

Finally we have 
$$
  \left(\Phi'\circ (\mathcal{J}')^{-1}\right)w(t,\cdot)= \left((\mathcal{J}')^{-1}\circ \Phi'\right) w(t,\cdot)=e^{-it H_{\rho,c}}f=u(t,\cdot),
$$
which gives 
$w(t,\cdot)=\left(\mathcal{J}'\circ (\Phi')^{-1}  \right) u(t,\cdot)$.
The mappings $\Phi'$ and $\mathcal{J}'$ are bijections (moreover, topological isomorphisms) on suitable spaces, so we have the following
result concerning the dual spaces.

\begin{prop} \label{prop:dist-in-value-2}
 Let $\alpha>0$.  If $u(t,\cdot)$ solves the Cauchy problem  \eqref{1dproblem2} with initial data $f\in \mathcal G'_{\alpha}(\RR)$ (  $f\in \mathcal G'_{0,\alpha}(\RR)$), 
 then the family of ultradistributions 
 $$
 w_t (\cdot) = w(t,\cdot)=\left(\mathcal{J}'\circ (\Phi')^{-1}  \right) u(t,\cdot), \qquad t \in [0,T],
$$  
solves the Cauchy problem \eqref{1dproblem4}
with the initial value  $\modu{g}=\left(\mathcal{J}'\circ (\Phi')^{-1}  \right)f
\in \mathcal H'_{\alpha/2,radial}(\RR^2)$ ( $\modu{g}
\in \mathcal H'_{0,\alpha/2,radial }(\RR^2)$).

If, on the other hand, the family $w_t (\cdot)$, $t \in [0,T]$,
solves the Cauchy problem \eqref{1dproblem4+} with initial value 
$\modu{g}\in  \mathcal H'_{\alpha/2,radial}(\RR^2)$
($\modu{g}\in  \mathcal H'_{0,\alpha/2,radial}(\RR^2)$), then the family $u(t,\cdot)=(\Phi'\circ(\mathcal{J}')^{-1})w(t,\cdot)$ solves the initial value problem 
\eqref{1dproblem4} with the initial value $f=(\Phi'\circ(\mathcal{J}')^{-1})\modu{g}\in \mathcal G'_{\alpha}(\RR_{+})$ ($f\in \mathcal G'_{0,\alpha}(\RR_{+}))$.
\end{prop}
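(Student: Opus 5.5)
The plan is to derive Proposition \ref{prop:dist-in-value-2} from the test-function identity \eqref{eq:he} by transposition, and then to read off both directions from the fact that $\Phi'$ and $\mathcal J'$ are bijections.

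\textbf{Step 1 (transposes).} The test-function-level identities transpose directly:
\begin{itemize}
\item $\mathcal J:\mathcal H_{\alpha/2,even}(\RR)\to\mathcal G_\alpha(\RR_+)$ is a topological isomorphism, so $\mathcal J'$ is a topological isomorphism between the corresponding strong duals.
\item $\Phi$ is an isomorphism by Proposition \ref{Phi}, so $\Phi'$ is one as well. The appendix result (Proposition \ref{prop:phi'}, Proposition \ref{dualradial}) identifies the dual of the radial space with $\mathcal H'_{\alpha/2,radial}(\RR^2)$.
\item The commutation relation \eqref{eq:J^{-1}-Phi-commute} transposes to \eqref{eq:komutativnost-dual}.
\end{itemize}
Since $e^{-itE_{\rho,c}}$ and $e^{-itH}$ are diagonal in the Laguerre and Hermite bases, they are symmetric with respect to the bilinear pairing, as in \eqref{dualexp}. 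Hence the transpose of the conjugated identity \eqref{eq:J-Phi-H-E} is the same identity with $\mathcal J,\Phi$ replaced by the inverses of their transposes. This gives
$$
e^{-itH_{\frac\rho4,c-\frac\rho2}}=(\Phi')^{-1}\circ\mathcal J'\circ e^{-itE_{\rho,c}}\circ\Phi'\circ(\mathcal J')^{-1}
$$
on $\mathcal H'_{\alpha/2,radial}(\RR^2)$, for every $t$.

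\textbf{Step 2 (forward direction).} Take $f\in\mathcal G'_\alpha(\RR_+)$ and $u(t,\cdot)=e^{-itE_{\rho,c}}f$. Define $\modu g=(\mathcal J'\circ(\Phi')^{-1})f$ and set $w(t,\cdot)=e^{-itH_{\frac\rho4,c-\frac\rho2}}\modu g$. Substituting the identity from Step 1 and cancelling $(\mathcal J')^{-1}\mathcal J'$ and $\Phi'(\Phi')^{-1}$, using \eqref{eq:komutativnost-dual} to reorder, gives $w(t,\cdot)=(\mathcal J'\circ(\Phi')^{-1})u(t,\cdot)$. Since $w(t,\cdot)$ is the propagator applied to $\modu g$, it is by definition (coefficientwise, via \eqref{eq:HarmOscProp}) the solution of \eqref{1dproblem4} with datum $\modu g$. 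The differentiation in $t$ is understood coefficientwise, which is legitimate because all the maps involved are continuous.

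\textbf{Step 3 (converse).} The converse is identical with the roles reversed. Given $w$ solving \eqref{1dproblem4+} with $\modu g$ radial, set $u=(\Phi'\circ(\mathcal J')^{-1})w$. Applying the inverse conjugation shows that $u(t,\cdot)=e^{-itE}f$ with $f=(\Phi'\circ(\mathcal J')^{-1})\modu g\in\mathcal G'_\alpha(\RR_+)$. The Beurling case is verbatim the same.

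\textbf{Main obstacle.} The delicate point is justifying that the transposed conjugation identity holds, i.e.\ that the propagators are symmetric and that $\Phi'$ maps onto the radial dual space. I would check the symmetry coefficientwise on the bases $l_n$ and $h_n$, and invoke the appendix Proposition \ref{dualradial} for the identification of $\Phi'$.
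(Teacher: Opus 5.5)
Your proposal is correct and follows essentially the same route as the paper. You transpose the conjugation identity \eqref{eq:he}/\eqref{eq:J-Phi-H-E} to get $e^{-itH_{\frac{\rho}{4},c-\frac{\rho}{2}}}=(\Phi')^{-1}\circ\mathcal J'\circ e^{-itE_{\rho,c}}\circ\Phi'\circ(\mathcal J')^{-1}$, apply it to $\modu{g}$, cancel and reorder via \eqref{eq:komutativnost-dual} to reach $w(t,\cdot)=(\mathcal J'\circ(\Phi')^{-1})u(t,\cdot)$, and obtain the converse from the bijectivity of $\Phi'$ and $\mathcal J'$; the only cosmetic difference is that the paper transposes the generator identity and then exponentiates, whereas you transpose the propagator identity directly using the symmetry of the diagonal operators, which is slightly cleaner.
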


\section{Relations between fractional transforms} \label{sec6}

By combining the formulas derived in the previous sections with the calculations presented below, we establish relations between several fractional integral transforms. For example, representations of some fractional transforms in terms of the propagators of operators on related domains are given in Table \ref{Table_2} (see also \cite{Atanas}.

{ \small
\begin{table}[!h]{
		\centering
		\begin{tabular}{|l | c | c| l| }\hline
transform		   & representation  &  propagator of & domain  \\ \hline
frac. Fourier 	   &  ${\mathcal F}_{\rho} =  e^{i\frac{\pi \rho}{2}}e^{-\frac{i\pi}{4}H_{\rho,0}}$   &  Harm. oscillator   & $\mathcal H'_{\alpha/2,radial}(\RR^2)$ \\ \hline
frac. pow. of H-C &    $\mathcal{I}_{\mathbf{e^{it}},0} =e^{it E}$ &  Laguerre operator & $\mathcal G'_{\alpha}(\RR_{+})$ \\ \hline
frac. Hankel &   $\mathscr{H}_t=e^{itJ}$    &  the $J$ operator 
& $\mathcal H'_{\alpha/2,even}(\RR)$  \\ \hline
   \end{tabular}
	}
	\caption{Relation between transforms and propagators  }\label{Table_2}
\end{table}
}

For reader's convenience, we recall the definition of fractional Fourier transform, see \cite[the equation (0.5)]{TBM}. Let  $f\in \SSS(\RR^d)$ and $t\in\RR$. 
Then we define fractional Fourier transform acting on $f$ in the following way:
\begin{equation}\label{eq:frac-fourier}
 \mathcal F_{\rho} f(\xi)=\langle  K_{d,\rho}(\xi,\cdot ),f \rangle     
\end{equation}
where $\displaystyle K_{d,\rho}=\bigotimes_{j=1}^d K_{\rho}$ and $K_{\rho}$ is defined as follows. 
$$K_{\rho}(\xi, x)=
\begin{cases}
\left( \frac{1-\cot(\frac{\pi \rho}{2})}{2\pi}\right)^{1/2}  e^{i\cdot \frac{(x^2+\xi^2)\cos\frac{\pi \rho}{2}-2x\xi}{2\sin \frac{\pi \rho}{2} }}   , & \rho\in\RR, \frac{\rho}{2}\not \in\ZZ, \\
\delta (\xi-x) , & \frac{\rho}{4}\in\ZZ, \\
\delta(\xi+x), &  \frac{\rho-2}{4}\in\ZZ. \\
\end{cases}$$

In particular, $\mathcal F_{1}$ and  $\mathcal F_{3}$ are
the Fourier transfom and the inverse Fourier transform, respectively.

 Let now $\rho\in\RR\setminus\{0\}$ and $c=0$. We will use the relation   \cite[Theorem 3.2]{TBM}
  between the propagator of harmonic oscillator
   and the fractional Fourier transform on $\RR^d:$
$$ e^{-i\frac{\pi}{4}H_{\rho,0}}=e^{-i\frac{\pi\rho d}{4}}{\mathcal F}_{\rho}.$$

\par

We now proceed to justify the results presented in Table  \ref{Table_2} 
 and discuss their interpretation when extended to ultradistributions.

Let  $\rho\in\CC\setminus\{0\},c\in\CC$.
Note that 
$$e^{-itJ_{\rho,c}}=\mathcal{J}^{-1}\circ e^{-itE_{\rho,c}}\circ \mathcal {J} $$
on $\SSS(\RR)$.
Thus,
for every even  function $g$ such that $ f(x)= 
\mathcal {J} (g) (x)
= g(\sqrt{x}) $
admits  Laguerre expansion of the form
$f = \sum_{n\in\NN_0} a_n(f) l_n$
we yield
$$
e^{ J_{\rho,c}} g (x) = \sum_{n\in\NN_0} a_n(f) e^{\rho n+c} l_n(x^2), 
\qquad x \in \RR.
$$ 
When $\rho = 1$ and $c = 0$, we set $e^{-it J_{1,0}}=e^{-itJ}$.

We follow the approach from \cite[Section 3, eq. (3.4)]{Namias} and define the fractional Hankel transform 
$\mathscr{H}_{t}$, $t \in \RR\setminus\{0\}$  
as follows:
 \begin{equation}\label{eq:frachankel}
 \mathscr{H}_{t} g (x)=\sum_{n=0}^{\infty} a_n(f) e^{in t} l_n(x^2)=\left(\left(\mathcal{J}^{-1}\circ e^{itE}\circ\mathcal{J}\right)g \right)(x), \qquad 
 x \geq 0,
 \end{equation}
$g\in\SSS(\RR_{+})$ and $f = \mathcal{J}g$.
We remark that the fractional Hankel transform can 
also be introduced in a different way, see for example \cite{Kerr},
and that the choice $t=\pm \pi$ reveals  the classical  Hankel transform, cf. \cite{Namias}.

We proceed with the Roumieu case, and note that the same conclusions hold for the Beurling case.

For  $g\in\mathcal H_{\alpha/2,even}(\RR)$ we have $f= \mathcal{J}g\in \mathcal G_{\alpha}(\RR_+)$  and the following equalities hold:
 \begin{equation}\label{eq:frac1}
 \mathscr{H}_{t} g(x)= 
 \left(e^{it J} g\right)(x)=(e^{it E} f)(x^2)=
 \left(\mathcal{I}_{e^{it},0}f\right)(x^2),\qquad x \in \RR.
 \end{equation}

By using \eqref{eq:frachankel} and the identity 
$$
\int_{\RR_{+}}l_n(x^2)l_m(x^2)xdx=\frac{1}{2}\delta_{m,n},
$$
($\delta_{m,n}$ denotes the Kronecker delta) we deduce 
\begin{equation}\label{eq:Hankeldual}
\int_{\RR_{+}} (\mathscr{H}_{t} g)(x^2)  \phi(x^2) xdx
= \int_{\RR_{+}} g(x^2) (\mathscr{H}_{t}\phi)(x^2)xdx=\langle g,\mathscr{H}_{t}\phi\rangle,
\end{equation}
when  $ g,\phi\in \SSS_{even}(\RR)$. 
We further extend \eqref{eq:Hankeldual} to the duality relation as follows.

Let $g\in \mathcal H'_{\alpha/2,even}(\RR)$. 
We then define the ultradistribution  $\mathscr{H}_{t} g$,  $t\in\RR\setminus\{0\}$, 
in the following way:
$$(\mathscr{H}_{t}g)(\varphi)=\langle g, \left(\mathcal{J}^{-1}\circ e^{-itE}\circ\mathcal{J}\right)  \varphi) \rangle,
\qquad 
\varphi\in \mathcal H_{\alpha/2,even}(\RR),
$$
i.e. $$\mathscr{H}_{t}g=\left(\mathcal{J}'\circ e^{-itE}\circ (\mathcal{J'})^{-1} \right)g.$$

Let $f=\mathcal {(J'})^{-1}(g)\in\mathcal G'_{\alpha}(\RR).$
Then, by using 
 \eqref{hcdistribution} we obtain the dual counterpart of \eqref{eq:frac1}:
\begin{equation}\label{eq:fracdual1}
\mathscr{H}_{t} g=\left(\mathcal{J'}\circ  \mathcal{I}_{e^{it},0}\right)(f). \end{equation}


We proceed by establishing connections between 
the Laguerre operator propagator, the fractional Fourier transform
and the fractional power of Hankel-Clifford transform.

Let $d=2$,  and  $\modu{g}\in \mathcal H_{\frac{\alpha}{2},radial}(\RR^2)$, $\alpha>0$. 
From Proposition \ref{Phi} we have  $g=\Phi^{-1} \modu{g}\in {\mathcal  H}_{\alpha/2,even}(\RR). $
Moreover, the function $f$ defined by $f=\mathcal{J}g = \mathcal{J} \circ \Phi^{-1} \modu{g} $ belongs to
$\mathcal G_{\alpha}(\RR_{+})$.

We will  use the identity 
\begin{eqnarray} \label{eq:translacija}
e^{-itJ_{\rho,c}}  & = & 
\mathcal{J}^{-1}\circ e^{-it\rho E_{\rho,c}}\circ \mathcal {J} \nonumber  \\
 & = &  e^{-itc}\cdot(\mathcal{J}^{-1}\circ e^{-itE_{\rho,0}}\circ \mathcal {J}) 
  \nonumber  \\
  & = & e^{-itc}\cdot (\mathcal{J}^{-1}\circ e^{-it\rho E}\circ \mathcal {J}), \quad 
  \rho>0,t\in\RR\setminus\{0\}.
\end{eqnarray}

From  \eqref{eq:J-Phi-H-E}, \eqref{eq:frac1}, \eqref{eq:translacija} and the definitions of fractional Hankel transform, fractional powers of the Hankel--Clifford transform, and the fractional Fourier transform, we have  the following relation:
\begin{eqnarray}  \label{eq:frac3}
  ({\mathcal F}_{\rho} \modu{g})
  (x_1,x_2)
 & = & e^{i\frac{\pi\cdot \rho }{2}}\cdot \left( e^{-\frac{i\pi}{4}H_{\rho,0}}\modu{g}\right)
 (x_1,x_2)
 \nonumber \\
 & = & e^{i\frac{\rho\pi }{2}}\cdot\left(e^{-i\frac{\pi}{4} E_{4\rho,2\rho}} (\mathcal{J} \circ \Phi^{-1} \modu{g} )\right) (x_1^2+x_2^2)
 \nonumber \\
  & = & 
  \left( \mathscr{H}_{-\rho\pi}  (\Phi^{-1} \modu{g} ) \right)
 (\sqrt{x_1^2+x_2^2})
 \nonumber \\
 & = &
  \left( e^{-i\rho\pi E} 
 (\mathcal{J} \circ \Phi^{-1} \modu{g} ) \right)
 (x_1^2+x_2^2)
 \nonumber \\
 & = & 
  \left(\mathcal{I}_{e^{-i\rho\pi },0}
 (\mathcal{J} \circ \Phi^{-1} \modu{g} ) \right)
 (x_1^2+x_2^2), 
\end{eqnarray}
where $\modu{g}\in \mathcal H_{\frac{\alpha}{2},radial}(\RR^2)$, $\alpha>0$.

 By choosing $\rho = 1$ in \eqref{eq:frac3} we recover the well-known 
connection 
$$
 ({\mathcal F}_{1} \modu{g})
  (x_1,x_2) =  \left( \mathscr{H}_{\pi}  (\Phi^{-1} \modu{g} ) \right)
 (\sqrt{x_1^2+x_2^2}), \qquad x_1, x_2 \in \RR,
$$
between the Fourier transform of a function in $\RR^2$  and the Hankel transform of its radial part, see  \cite[p.~336]{Bracewell}.

The action of $\Phi^{-1}$, $\mathcal J$, the fractional Hankel transform and the 
fractional power of Hankel-Clifford transform is given in Figure \ref{Figure_1}.

\begin{figure}[h]
\[
\begin{tikzcd}[column sep=huge, row sep=huge]
\modu{g}
\arrow[r, "\Phi^{-1}"]
\arrow[d, "\mathcal J"']
& \Phi^{-1}\modu{g}
\arrow[r, "\mathscr{H}_{-\rho\pi}"]
\arrow[d, "\mathcal J"]
& \mathscr{H}_{-\rho\pi}(\Phi^{-1}\modu{g})
\arrow[r, "="]
& \mathcal F_{\rho}\modu{g}
\\
\mathcal J \modu{g}
\arrow[r, "\Phi^{-1}"']
& \mathcal (\mathcal J\circ \Phi^{-1})\modu{g}
\arrow[r,"\mathcal{I}_{e^{it},0}" ]
&  \mathcal{I}_{e^{it},0}\left((\mathcal J\circ \Phi^{-1})\modu{g}\right)
\arrow[u, "=", shift left]
\end{tikzcd}
\]
\caption{Relation between  fractional integral transforms - functions}\label{Figure_1}
\end{figure}

By the continuity of $e^{-i\rho\pi E} $, see Theorem \ref{cont. E},
it follows that all of the above mentioned transforms are continuous on their respective domains. 

We conclude the paper by deriving the dual counterpart of \eqref{eq:frac3}.

Let $\modu{g}\in \mathcal H'_{\frac{\alpha}{2},radial}(\RR^2)$. Then $g=\Phi'\modu{g}\in \mathcal H'_{\frac{\alpha}{2},even}(\RR)$, $f=\mathcal {(J'})^{-1}(g)\in\mathcal G'_{\alpha}(\RR) $,  and 
$$\left(e^{-i\frac{\pi}{4}H_{\rho,0 }}\right)\modu{g}=\left((\Phi')^{-1}\circ \mathcal{J}'\circ e^{-i\frac{\pi}{4}E_{4\rho,2\rho}}\circ \Phi'\circ (\mathcal{J'})^{-1}\right)\modu{g} . $$
From \eqref{eq:komutativnost-dual} we have $$\left(\Phi'\circ (\mathcal{J'})^{-1}\right)\modu{g}=\left((\mathcal{J'})^{-1}\circ\Phi' \right)\modu{g}=f$$ and therefore 
 $${\mathcal F}_{\rho} \modu{g}=  e^{-\frac{i\pi}{4}H_{\rho,0}}\modu{g}=\left((\Phi')^{-1}\circ \mathcal{J}'\circ \mathcal{I}_{e^{-i\rho\pi },0}\right)f.$$
 From \eqref{eq:fracdual1} we have that $\mathscr{H}_{-\rho\pi}g=\left(\mathcal{J'}\circ  \mathcal{I}_{e^{-\rho\pi i},0}\right)(f)$ 
 and, by  using \eqref{eq:komutativnost-dual}  once again,
 we obtain
\begin{equation*}
    {\mathcal F}_{\rho} \modu{g}=\left((\Phi')^{-1}\circ \mathcal{J}'\circ \mathcal{I}_{e^{-i\rho\pi },0}\right)f=
    \left((\Phi')^{-1}\circ\mathscr{H}_{-\rho\pi}\right)(g).
\end{equation*}

Therefore, Figure \ref{Figure_1} extends naturally to ultradistributions by replacing $\Phi^{-1}$ and $\mathcal{J}$ with $(\Phi')^{-1}$ and $\mathcal{J}'$, respectively, at each occurrence, see Figure \ref{Figure_2}.

\begin{figure}[h]
\[
\begin{tikzcd}[column sep=huge, row sep=huge]
\modu{g}
\arrow[r, "(\Phi')^{-1}"]
\arrow[d, "\mathcal J' "']
& (\Phi')^{-1}\modu{g}
\arrow[r, "\mathscr{H}_{-\rho\pi}"]
\arrow[d, "\mathcal J'"]
& \mathscr{H}_{-\rho\pi}((\Phi')^{-1}\modu{g})
\arrow[r, "="]
& \mathcal F_{\rho}\modu{g}
\\
\mathcal J' \modu{g}
\arrow[r, "(\Phi')^{-1}"']
& \mathcal (\mathcal J'\circ (\Phi')^{-1})\modu{g}
\arrow[r,"\mathcal{I}_{e^{it},0}" ]
&  \mathcal{I}_{e^{it},0}\left((\mathcal J'\circ (\Phi')^{-1})\modu{g}\right)
\arrow[u, "=", shift left]
\end{tikzcd}
\]
\caption{Relation between fractional integral transforms - ultradistributions}\label{Figure_2}
\end{figure}

\appendix 

\renewcommand{\thesection}{\Alph{section}}

\makeatletter
\def\@seccntformat#1{%
  \@ifundefined{#1@cntformat}
    {\csname the#1\endcsname.\hspace{0.5em}}
    {\csname #1@cntformat\endcsname}}
\newcommand\section@cntformat{%
  \appendixname\ \thesection.\hspace{0.5em}}
\makeatother

\section{Appendix} \label{App-A}

\renewcommand{\theequation}{A.\arabic{equation}}
\setcounter{equation}{0}

\subsection{Propagators as infinite sums of operators}

We first show that the notation in \eqref{eE^r} and \eqref{eq:general_propagator}
is in accordance to the usual definition of propagators in the form of a series. This is 
justified by the following result.

\begin{lem}\label{Lager} 
Let
$f\in\mathcal G_{\alpha}(\RR^d _+),$
$\alpha\in (0,1)$, and 
$E_{\rho,c}$ be given by  \eqref{eq:E_ro_ce}, $\rho\in\CC \setminus \{ 0\}$,  $c\in\CC$.
Then we have 
\begin{equation} \label{eq:semigroup}
e^{E_{\rho,c}}f (x)=\sum_{n=0}^\infty \frac{E_{\rho,c}^n }{n!} f (x),
\qquad 
x \in \overline{\RR^d _+}.    
\end{equation}

\end{lem}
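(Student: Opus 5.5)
We argue on the level of Laguerre coefficients. Write $f=\sum_{k\in\NN_0^d} a_k(f)l_k$ with $|a_k(f)|\leq Ce^{-h|k|^{1/\alpha}}$ for some $C,h>0$, by Theorem \ref{glavnabeurling} (ii). Since $E_{\rho,c}l_k=(\rho|k|+c)l_k$, each iterate satisfies
$$
E^n_{\rho,c}f=\sum_{k\in\NN_0^d}(\rho|k|+c)^n a_k(f)\, l_k .
$$
This follows from Theorem \ref{cont. E} (i) with $r=1$, applied after factoring $E_{\rho,c}=\rho E_{c/\rho}$; it shows that $E_{\rho,c}$ maps $\mathcal G_\alpha(\RR^d_+)$ into itself. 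The right-hand side of \eqref{eq:semigroup} is then, formally,
$$
\sum_{n=0}^\infty\sum_{k}\frac{(\rho|k|+c)^n}{n!}a_k(f)l_k(x).
$$
Interchanging the sums gives $\sum_k e^{\rho|k|+c}a_k(f)l_k(x)$, which is $e^{E_{\rho,c}}f(x)$ by \eqref{eq:general_propagator} (with $-it$ replaced by $1$, i.e.\ the definition \eqref{eE^r} with $z=\rho$ after factoring). The whole proof therefore reduces to justifying this interchange of summation pointwise for $x\in\overline{\RR^d_+}$.

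For the interchange we use Fubini for double series, which requires absolute convergence of the double sum. We will use the bound $|l_k(x)|\leq 1$ on $\overline{\RR^d_+}$. This holds in one dimension because $|L_n(x)|\leq e^{x/2}$ for $x\geq0$, and it passes to $d$ dimensions through the tensor product structure. With this bound, the double sum is dominated by
$$
\sum_k\sum_n\frac{(|\rho||k|+|c|)^n}{n!}\,Ce^{-h|k|^{1/\alpha}}
= C\sum_k e^{|\rho||k|+|c|-h|k|^{1/\alpha}} .
$$

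The key step, and the only place where $\alpha\in(0,1)$ is used, is the convergence of this last series. Since $1/\alpha>1$, the term $h|k|^{1/\alpha}$ eventually dominates $|\rho||k|$. Hence
$$
|\rho||k|-h|k|^{1/\alpha}\leq C'-\tfrac h2|k|^{1/\alpha},
$$
and $\sum_k e^{-\frac h2|k|^{1/\alpha}}<\infty$ because there are only polynomially many $k$ with $|k|=m$. This is exactly the estimate from the proof of Theorem \ref{cont. E} (ii), now applied with $|\rho|$ in place of $\operatorname{Re} z$. It fails for $\alpha\geq1$ when $|\rho|$ is large, which is consistent with the remark that the series representation breaks down on $G$-type spaces.

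Absolute convergence justifies both the pointwise interchange and the convergence of $\sum_n E^n_{\rho,c}f(x)/n!$ for every fixed $x$. In fact the convergence is uniform in $x$, since the bound obtained above does not depend on $x$. I expect no real obstacle beyond the uniform bound on $l_k$. If one prefers to avoid that bound, $|l_k(x)|\leq C\langle k\rangle^{N}$ would suffice, because the exponential decay absorbs any polynomial factor.
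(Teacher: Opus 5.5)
Your proof is correct. Its overall strategy matches the paper's: expand in Laguerre coefficients, prove absolute convergence of the double series $\sum_{m,n} a_m(f)\frac{(\rho|m|+c)^n}{n!}\,l_m(x)$, and then interchange the summations.

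The difference is in how absolute convergence is established.

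\emph{Your route.} You sum the nonnegative majorant over $n$ first (Tonelli), which gives $e^{|\rho||k|+|c|}$. The only remaining check is $\sum_k e^{|\rho||k|-h|k|^{1/\alpha}}<\infty$, where the hypothesis $1/\alpha>1$ enters directly. You also use the classical sharp bound $|l_k|\le 1$ (from $|L_n(x)|\le e^{x/2}$). The paper uses the weaker polynomial bound $|l_m(x)|\le C_1|m|^{2d}$; as you note, either suffices.

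\emph{The paper's route.} It splits $n!=(n!)^{1-\tau}(n!)^{\tau}$ with $\alpha+\tau<1$ and applies the Gelfand--Shilov type inequality \eqref{GS2} to $(|\rho||m|+|c|)^n/(n!)^{1-\tau}$. This yields a majorant of product form $C_4\,e^{-\frac h4|m|^{1/\alpha}}(n!)^{-\tau}$.

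\emph{What each buys.} Your argument is shorter and more elementary, and it already gives the pointwise statement uniformly in $x\in\overline{\RR^d_+}$. The paper's separable majorant controls the tails in $n$ uniformly in $m$, by $\sum_{n>N}(n!)^{-\tau}$. That makes the remark following the lemma immediate: the series converges uniformly on bounded subsets of $\mathcal G_\alpha(\RR^d_+)$. With your majorant, $\sum_{n>N}s^n/n!$ does not tend to $0$ uniformly in $s=|\rho||k|+|c|$. Reaching the same stronger conclusion would need a small extra step: split into $|k|\le M$, where the tail vanishes for finitely many $k$, and $|k|>M$, where $e^{|\rho||k|+|c|-(h-h')|k|^{1/\alpha}}$ is small.
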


\begin{proof}
Fix $x\in\overline{\RR^d_{+}},$  $m\in\NN_0^d,$ and $ n\in\NN_0$. Then
$$
\left(E^n_{\rho,c} f\right)(x)=\sum_{m\in\NN^d_0} a_m(f)\cdot  (\rho|m|+c)^{n}\cdot  l_m(x), 
$$ 
gives
$$\sum_{n=0}^{\infty} \frac{E_{\rho,c}^n f (x)}{n!}=\sum_{n=0}^{\infty}\left ( \sum_{m\in\NN_0^d} a_m(f)\cdot \frac{ \left( \rho|m|+c  \right)^{|n|} }{n!} l_m(x)       \right). $$
By Theorem \ref{glavnabeurling} 
we have $|a_m(f)|\leq C e^{-h |m|^{1/\alpha}}$ for some $C,h>0$,
so that
$$\left|a_m(f)\cdot \frac{ \left( \rho|m|+c  \right)^{n} }{n!} l_m(x)\right|\leq C e^{-h|m|^{1/\alpha}}\cdot \frac{ \left(|\rho| \cdot |m|+|c|\right)^{n} }{n!} \cdot C_1 |m|^{2d},$$
where we used the  estimate for Laguerre functions $|l_m(x)|\leq C_1 |m|^{2d}$, for some $C_1>0$, see \cite[Lemma $4.1$]{JPTV}.

If $\ds C_2=CC_1\sup_{m\in\NN_0^d} \left(e^{-\frac{h}{2}|m|^{1/\alpha}}\cdot |m|^{2d}\right)  $ we obtain
$$\left|a_m(f)\cdot \frac{ \left( \rho|m|+c  \right)^{n} }{n!} l_m(x)\right|\leq C_2 e^{-\frac{h}{2}|m|^{1/\alpha}} \cdot \frac{\left( |\rho|\cdot |m|+|c|\right)^{n}     }{(n!)^{1-\tau}}\cdot \frac{1}{(n!)^{\tau}}$$
for every $\tau\in (0,1)$. In the sequel we  choose $\tau>0 $ such that 
$\alpha + \tau <1$. 
From the inequality 
\begin{equation}\label{GS2}
C_1 e^{A s^{1/\alpha}}\leq \sup_{n\in\mathbb N_0}\frac{s^n}{n!^\alpha}\leq C_2 e^{B s^{1/\alpha}}, \qquad
 s\geq 0,
\end{equation}
see \cite[(9)]{JPTV},  it follows that 
there exists $C_3, h_1>0$ such that 
$$\frac{\left( |\rho|\cdot |m|+|c|\right)^{n}     }{(n!)^{1-\tau}}\leq C_3e^{\left( |\rho|\cdot |m|+|c|\right)^{1/(1-\tau)}  }.$$
Note that
$$
C_4=C_3 \cdot \sup_{m\in\NN_0^d}\left( e^{\left( |\rho|\cdot |m|+|c|\right)^{1/(1-\tau)} - \frac{h}{4}|m|^{1/\alpha}   } \right) < \infty,
$$
since $0 < \alpha < 1-\tau$. Finally, we have 
$$
\left|a_m(f)\cdot \frac{ \left( \rho|m|+c  \right)^{n} }{n!} l_m(x)\right|\leq C_4 e^{-\frac{h}{4}|m|^{1/\alpha}}\cdot \frac{1}{|n!|^{\tau}},
$$
and
$$
\sum_{m\in\NN_0^d,n\in\NN_0}e^{-\frac{h}{4}|m|^{1/\alpha}}\cdot \frac{1}{(n!)^{\tau}}<\infty,
$$
which proves the absolute convergence of the series
$$\sum_{m\in\NN_0^d,n\in\NN_0}a_m(f)\cdot \frac{ \left( \rho|m|+c  \right)^{n} }{n!} l_m(x).$$
This allows us to swap the order of summation, so that
\begin{eqnarray*}
 \sum_{n=0}^{\infty} \frac{E_{\rho,c}^n f (x)}{n!}
& = &
\sum_{m\in\NN_0^d}\left( \sum_{n=0}^{\infty}\left(a_m(f)\cdot l_m(x)\cdot  \frac{ \left( \rho|m|+c  \right)^{n} }{n!} \right) \right)
\\
 & = &
\sum_{m\in\NN_0^d} a_m(f)l_m(x)\cdot\left( \sum_{n=0}^{\infty} \frac{ \left( \rho|m|+c  \right)^{n} }{n!} \right)
\\
& = &\sum_{m\in\NN_0^d} a_m(f)\cdot e^{\rho|m|+c}\cdot l_m(x)=e^{\rho E+c}  f (x)
=e^{E_{\rho,c}}f (x),\end{eqnarray*}
which finishes the proof.
\end{proof}

The proof also implies the equality \eqref{eq:semigroup} holds uniformly with respect to  $x\in \overline{\RR^d_{+}}$.

The same technique can be used to prove a stronger statement, that is the series in \eqref{eq:semigroup} converges uniformly on bounded sets in  $\mathcal G_{\alpha}(\RR^d _+)$  
( in $\mathcal G_{0,\alpha}(\RR^d _+)$).
This implies that  $e^{-itE_{\rho,c}}\rightarrow I$ as  $t\to 0$,
in the strong operator topology on  
$(L(\mathcal G_{\alpha}(\RR^d _+))$ (on  $(L(\mathcal G_{0,\alpha}(\RR^d _+))$).

The following lemma shows that the condition $0< \alpha < 1$ 
is sharp when considering convergence in the $L^2$-norm.

\begin{lem}
Let $\alpha\geq 1$. Then there exists  $f\in \mathcal G_{\alpha}(\RR _+)$.
such that the series  $\ds \sum_{n=0}^\infty \frac{E_{\rho,c}^n f}{n!}$ 
does not converge in  the  $L^2$-norm . 
\end{lem}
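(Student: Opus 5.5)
The plan is to build an explicit counterexample by prescribing Laguerre coefficients. We work with $d=1$ and write $a_m=a_m(f)$. Orthonormality of $\{l_m\}$ turns the $L^2$-norm of each partial-sum term into a coefficient computation. Since $E_{\rho,c}^n f=\sum_m a_m(\rho m+c)^n l_m$, Parseval gives
$$
\Big\|\frac{E_{\rho,c}^n f}{n!}\Big\|_{L^2}^2=\sum_{m\in\NN_0}|a_m|^2\frac{|\rho m+c|^{2n}}{(n!)^2}.
$$
If the series $\sum_n E^n_{\rho,c}f/n!$ converged in $L^2$, its terms would tend to $0$ in $L^2$. So it suffices to choose $f\in\mathcal G_\alpha(\RR_+)$ for which these norms are unbounded in $n$.

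For $\alpha\ge 1$, Theorem \ref{glavnabeurling} lets us take any coefficients with $|a_m|\le Ce^{-hm^{1/\alpha}}$. We choose $\alpha=1$, the worst case, since $\mathcal G_1(\RR_+)\subseteq\mathcal G_\alpha(\RR_+)$ for $\alpha\ge1$. Concretely, set $a_m=e^{-\varepsilon m}$, so that $f=\sum_m e^{-\varepsilon m}l_m\in\mathcal G_1(\RR_+)$. The key lower bound comes from keeping a single term $m$ in the sum:
$$
\Big\|\frac{E_{\rho,c}^n f}{n!}\Big\|_{L^2}\ge e^{-\varepsilon m}\frac{|\rho m+c|^n}{n!}.
$$
For $m$ large we have $|\rho m+c|\ge |\rho| m/2$. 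We then take $m=m(n)\approx Kn$ with $K$ large. Stirling gives $n!\le e\sqrt n\,(n/e)^n$, so the right-hand side is at least roughly
$$
\Big(e^{1-\varepsilon K}\,\tfrac{|\rho| K}{2}\Big)^n n^{-1/2}.
$$
Fixing $\varepsilon$ with $\varepsilon<\tfrac{1}{K}\log\big(e|\rho|K/2\big)$ makes the base exceed $1$. For example, take $K$ with $e|\rho| K/2\ge e^2$ and then $\varepsilon=1/K$, which makes the base at least $e$. The terms therefore blow up exponentially in $n$, and the series diverges in $L^2$.

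This also shows why the threshold $\alpha<1$ in Lemma \ref{Lager} matters. In that lemma the factor $e^{-hm^{1/\alpha}}$ beats $\sup_n (|\rho|m)^n/n!\approx e^{|\rho|m}$ precisely when $1/\alpha>1$. Our choice of $\varepsilon$ small relative to $|\rho|$ is the exact way to defeat this at $\alpha=1$, using the lower half of \eqref{GS2} (or directly $\sup_n s^n/n!\ge c\,e^{s}/\sqrt{s}$).

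The only delicate step is the choice of $\varepsilon$ relative to $|\rho|$ so that the exponential growth wins. We also need the standing assumption $\rho\neq0$, so that $|\rho m+c|$ grows linearly in $m$. Everything else is Parseval plus Stirling.
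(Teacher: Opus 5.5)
Your proof is correct. It shares the paper's skeleton: an explicit $f$ with geometrically decaying Laguerre coefficients, Parseval, keeping a single index, and Stirling. It differs at the two points that carry the logic.

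The paper reduces to $\rho=1$, $c=0$ and takes $f=\sum_k e^{-k}l_k$. Evaluating at $k=n$ it gets $\|E^nf/n!\|_{L^2}\ge e^{-1/12}(2\pi n)^{-1/2}$, and it concludes from the divergence of $\sum_n n^{-1/2}$. That inference only excludes \emph{absolute} convergence. For this $f$ the term norms in fact tend to $0$, roughly like $(4\pi n)^{-1/4}$, so non-convergence has to come from elsewhere. One option is to note that the partial sums converge coefficientwise to $\sum_k l_k\notin L^2(\RR_+)$. Moreover, the reduction to $\rho=1$, $c=0$ is not harmless for a fixed $f$. For $|\rho|<1$ and $c=0$, the series built from $f=\sum_k e^{-k}l_k$ converges in $L^2$ to $\sum_k e^{(\rho-1)k}l_k$ by dominated convergence.

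You instead tune $\varepsilon$ to $|\rho|$ and take $m=Kn$. This makes the term norms grow at least like $e^n/\sqrt n$, so the simple term test settles the matter for every $\rho\neq0$ and $c\in\CC$. The only price is that $f$ depends on $\rho$, which the existence statement allows. For the details, take $K\in\NN$ so that $m=Kn$ is an integer, and take $n$ large enough that $|\rho m+c|\ge|\rho|m/2$.
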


\begin{proof}
It is enough to consider the case $\rho=1, c=0$ i.e. $E_{\rho,c}=E$.
From  \cite[(9)]{JPTV}, it follows that 
$$
\sup_{n\in\NN_0} \frac{k^n}{n!}\geq Ce^{kA}
\qquad \text{for some} \qquad C,A>0.
$$
If
$\ds f=\sum_{k=0}^{\infty} e^{-k}l_k\in \mathcal G_1(\RR _+)$
we obtain
$$\left\|\frac{E^n f}{n!}\right\|_{L^2}=\left(\sum_{k=0}^{\infty} e^{-2k}\cdot \frac{k^{2n}}{{n!}^2}\right)^{1/2}\geq \sup_{k\in\NN_0}\left( e^{-k}\frac{k^n}{n!}\right)=\frac{\sup_{k\in\NN_0} \left(e^{-k} k^n\right)}{n!}.$$
The function $x\mapsto e^{-x}x^n$ attains its maximum value when $x=n$, 
and by Stirling's formula we have 
$\ds n!\leq \sqrt{2\pi n}\cdot n^n e^{-n} e^{\frac{1}{12n}}$. 
Thus, we obtain
$$
\left\|\frac{E^n f}{n!}\right\|_{L^2}\geq \frac{e^{-n} n^n}{n!}\geq e^{-\frac{1}{12n}}\cdot \frac{1}{\sqrt{2\pi n}}\geq e^{-\frac{1}{12}}\frac{1}{\sqrt{2\pi n}}. 
$$
Since the series $\ds \sum_{n=1}^{\infty} \frac{1}{\sqrt{n}}$ is divergent, we conclude that $\ds \sum_{n=0}^{\infty}\frac{E^n f}{n!}$ does not converge either. 
\end{proof}

We conclude this subsection with the global counterpart of Lemma \ref{Lager} 
and omit the proof, since it is similar to that of Lemma \ref{Lager}.

\begin{lem}\label{Ermit}
Let there be given $\rho,c\in\CC$, and $f\in\mathcal H_{\alpha}(\RR^d)$, $ \alpha\in (0,\frac{1}{2})$. Then we have 
$$e^{H_{\rho,c}}f(x)=\sum_{n=0}^{\infty} \frac{H_{\rho,c}^n f(x)}{n!}, \qquad x \in \RR^d.$$
\end{lem}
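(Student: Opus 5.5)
The plan is to copy the proof of Lemma \ref{Lager}, with the Laguerre expansion replaced by the Hermite expansion and $E_{\rho,c}$ replaced by $H_{\rho,c}$.

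\emph{Step 1: action of powers of $H_{\rho,c}$.} Write $f=\sum_{m\in\NN_0^d} c_m(f)h_m$. By Proposition \ref{char H and S} and the sequence-space description of $\mathcal H_\alpha(\RR^d)$, there are $C,h>0$ with
$$|c_m(f)|\le C e^{-h|m|^{1/(2\alpha)}}, \qquad m\in\NN_0^d.$$
Since $Hh_m=(2|m|+d)h_m$, each power acts diagonally:
$$H_{\rho,c}^n f=\sum_m c_m(f)\,\bigl(\rho(2|m|+d)+c\bigr)^n h_m .$$
This holds pointwise, because the Hermite series of $H^n f$ converges absolutely and uniformly. For that I use the standard bound $\|h_m\|_{L^\infty}\le C_1$, uniform in $m$ (a polynomial bound in $|m|$ would suffice just as well).

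\emph{Step 2: summable majorant for the double series.} I want to control
$$\sum_{n\ge0}\sum_{m}\Bigl|c_m(f)\,\frac{(\rho(2|m|+d)+c)^n}{n!}\,h_m(x)\Bigr|.$$
Set $s_m=|\rho|(2|m|+d)+|c|$. Split $n!=n!^{1-\tau}\,n!^{\tau}$ with $\tau\in(0,1)$. By \eqref{GS2} with exponent $1-\tau$,
$$\frac{s_m^n}{n!^{1-\tau}}\le C_3\, e^{B s_m^{1/(1-\tau)}} .$$
This growth is absorbed by the decay $e^{-h|m|^{1/(2\alpha)}}$ exactly when $1/(1-\tau)<1/(2\alpha)$, i.e. $2\alpha+\tau<1$. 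Such a $\tau>0$ exists precisely because $\alpha<1/2$. With this choice the terms are bounded by
$$C_4\, e^{-\frac h2|m|^{1/(2\alpha)}}\; n!^{-\tau},$$
which is summable over $(m,n)\in\NN_0^d\times\NN_0$.

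\emph{Step 3: interchange and conclude.} Absolute convergence lets me interchange the order of summation. The inner sum over $n$ gives the exponential:
$$\sum_n \frac{(\rho(2|m|+d)+c)^n}{n!}=e^{\rho(2|m|+d)+c}.$$
The result is therefore $\sum_m e^{\rho(2|m|+d)+c}c_m(f)h_m$, which is $e^{H_{\rho,c}}f$ by the spectral definition \eqref{eq:HarmOscProp}, up to the paper's normalization of time there. The bound is uniform in $x$, so the equality holds pointwise on $\RR^d$, indeed uniformly.

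\emph{Main obstacle.} The only delicate point is Step 2. The factor $e^{Bs_m^{1/(1-\tau)}}$ grows at least like $e^{B'|m|}$, so it can be beaten only when $1/(2\alpha)>1$. This is the role of the hypothesis $\alpha<1/2$, the analogue of $\alpha<1$ in Lemma \ref{Lager}, reflecting the index shift $\alpha\leftrightarrow\alpha/2$ between the $\mathcal H$- and $\mathcal G$-scales.
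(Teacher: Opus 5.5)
Your proposal is correct and follows the route the paper intends: the paper omits this proof, saying only that it is similar to that of Lemma \ref{Lager}, and your argument is a faithful Hermite transcription of that proof. You use coefficient decay $|c_m(f)|\le Ce^{-h|m|^{1/(2\alpha)}}$, a uniform bound on $h_m$, the split $n!=n!^{1-\tau}n!^{\tau}$ together with \eqref{GS2}, and the choice $2\alpha+\tau<1$, which is available exactly because $\alpha<1/2$.
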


\subsection{Radially invariant ultradistributions}

We end the paper by  studying the notion of radial invariance in the context of ultradistributions. Although the definition can be extended to much broader classes of ultradistributions, for simplicity we restrict our attention to the duals of $\mathcal A \in \{ \mathcal H_{\alpha/2}(\RR^d ), 
\mathcal  H_{0,\alpha/2}(\RR^d ) \}$,  $\alpha>0$.

\begin{defn} \label{def:radial-distr}
An  ultradistribution $f\in \mathcal {\mathcal A}' $ is called radially symmetric (or radial for short) if for every test function $\mathcal \varphi\in \mathcal A$ and every  $A\in O(d)$ we have 
$$\langle f\circ A,\varphi\rangle=\langle f,\varphi(A^{-1}\cdot)\rangle=\langle f,\varphi\rangle. $$
By $\mathcal A' _{radial}$ we denote the space of all radially symmetric  ultradistributions in 
$ \mathcal {\mathcal A}' $.
\end{defn}

It is easy to check that $\mathcal A' _{radial}$ is a closed subspace 
in $\mathcal A'$. 


As in \cite{Chung}, to every test function $\varphi \in \mathcal A$  we associate
its spherical average $\varphi_S(x)$, 
$$
\varphi_S (x)=\int_{O(d)} \varphi(Ax)dA=\int_{S^{d-1}}\varphi(|x|\omega)d\omega ,
\qquad x \in \RR^d
$$
where $dA$ is normalized Haar measure on $O(d)$. It is easy to see that this function is radial.

\begin{lem}\label{proj}
The mapping $P:\mathcal H_{\alpha/2}(\RR^d)\rightarrow {\mathcal H}_{\alpha/2, radial }(\RR^d)$ ($ P:\mathcal H_{0,\alpha/2}(\RR^d)\rightarrow {\mathcal H}_{0,\alpha/2, radial }(\RR^d)) $ defined by 
$  P(\varphi)=\varphi_S$, $ \varphi \in \mathcal A$,
is a continuous surjection and $P^2=P$.
\end{lem}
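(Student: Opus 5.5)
The plan is to work entirely at the level of Hermite coefficients, where $\mathcal H_{\alpha/2}(\RR^d)$ and $\mathcal H_{0,\alpha/2}(\RR^d)$ are identified with $\ell_{\alpha/2}(\NN_0^d)$ and $\ell_{0,\alpha/2}(\NN_0^d)$ via $T_h$ (Proposition \ref{char H and S}). The first step is to establish that $P$ is well defined and continuous: since $\varphi_S$ is an average of $\varphi\circ A$ over $A\in O(d)$, it suffices to show that each composition $\varphi\mapsto\varphi(A\,\cdot)$ maps $\mathcal H_{\alpha/2}(\RR^d)$ into itself with bounds uniform in $A\in O(d)$. For this we use the characterization \eqref{eq:Pilipovic}. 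The harmonic oscillator $H=-\Delta+|x|^2$ commutes with orthogonal changes of variables, because $\Delta$ and $|x|^2$ are $O(d)$-invariant by \eqref{ortogonalnost}. Hence $H^N(\varphi\circ A)=(H^N\varphi)\circ A$, and therefore $\|H^N(\varphi\circ A)\|_{L^\infty}=\|H^N\varphi\|_{L^\infty}$.

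Next we pass the estimate to the average. Since $dA$ is a probability measure and $\varphi$ is smooth with rapid decay, differentiation under the integral sign is justified, giving $H^N\varphi_S=\int_{O(d)}(H^N\varphi)(A\,\cdot)\,dA$. Consequently
$$
\|H^N\varphi_S\|_{L^\infty}\le \|H^N\varphi\|_{L^\infty}\le C h^N N!^{\alpha},
$$
with the same $C,h$. Thus $\varphi_S\in\mathcal H_{\alpha/2}(\RR^d)$ (respectively $\mathcal H_{0,\alpha/2}(\RR^d)$, for every $h$), and $\varphi_S$ is radial, since $\varphi_S(Bx)=\varphi_S(x)$ by invariance of Haar measure together with Remark \ref{orthogonal}. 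The inequality shows that $P$ maps each step space of the inductive (respectively projective) limit defined by the $H^N$-seminorms into itself with norm at most $1$. We then invoke the fact from \cite{Toft1} that this $H^N$-description gives the same topology as the sequence-space topology, which yields continuity.

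The remaining claims are immediate. If $\varphi$ is radial, then $\varphi(Ax)=\varphi(x)$ for all $A$, so $\varphi_S=\varphi$. This gives surjectivity onto $\mathcal H_{\alpha/2,radial}(\RR^d)$, and, since $P\varphi$ is radial, also $P^2=P$.

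The main obstacle is the topological identification in the continuity step: \eqref{eq:Pilipovic} is stated only as a set-theoretic characterization. If the seminorm equivalence from \cite{Toft1} is not available in the needed form, we will argue on coefficients instead. The space $\mathrm{span}\{h_n:|n|=k\}$ is the eigenspace of $H$ for the eigenvalue $2k+d$, and it is invariant under $\varphi\mapsto\varphi\circ A$ by the commutation noted above. The map $\varphi\mapsto\varphi\circ A$ is unitary on $L^2$, so averaging over $O(d)$ gives a self-adjoint $L^2$ contraction preserving each eigenspace. It follows that $\sum_{|n|=k}|c_n(\varphi_S)|^2\le\sum_{|n|=k}|c_n(\varphi)|^2$. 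Since the number of $n$ with $|n|=k$ grows only polynomially in $k$, this gives $\ell^2$-weighted bounds $\|\{c_n(\varphi_S)\}\|_{\ell^2_{[\vartheta_{r,\alpha/2}]}}\le\|\{c_n(\varphi)\}\|_{\ell^2_{[\vartheta_{r,\alpha/2}]}}$, where $\vartheta_{r,\alpha/2}$ depends only on $|n|$. By the remark that $\ell^p$-norms may replace $\ell^\infty$-norms in \eqref{eq:sequence_ell}, this yields continuity directly.
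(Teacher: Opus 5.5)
Your proposal is correct and follows essentially the paper's proof: commute $H$ with $\varphi\mapsto\varphi(A\,\cdot)$, pass $H^N$ under the Haar integral to get $\|H^N\varphi_S\|_{L^\infty}\le Ch^NN!^{\alpha}$, and read off surjectivity and $P^2=P$ from the fact that $P$ fixes radial functions. Your coefficient-level fallback is also valid and a useful supplement: the averaging is an $L^2$ contraction that commutes with the projections onto $\mathrm{span}\{h_n:|n|=k\}$, so every weighted $\ell^2$ norm whose weight depends only on $|n|$ is non-increasing, which makes continuity in the sequence-space topology explicit where the paper leaves it implicit.
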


\begin{proof} We give the proof for $\mathcal H_{\alpha/2}(\RR^d)$, $\alpha>0$, 
since the proof for  $\varphi\in \mathcal H_{0,\alpha/2}(\RR^d)$ is similar.
Let $\varphi\in \mathcal H_{\alpha/2}(\RR^d)$, $\alpha>0$, i.e.
$\|H^N \varphi\|_{L^{\infty}(\RR^d)}\leq Ch^N N!^{\alpha}$ for some $C,h>0$ (see \eqref{eq:Pilipovic}). 
By
$(H\varphi(A\cdot))(x)=(H\varphi)(Ax)$, $A\in O(d)$,  we get
$$
(H^N \varphi_S)(x)=H^N\left(\int_{O(d)} \varphi(Ax)dA\right)=\int_{O(d)} (H^N \varphi)(Ax)dA,
\qquad N\in\NN_0,
$$
wherefrom
$$\|H^N \varphi_S\|_{L^{\infty}(\RR^d)}\leq C\cdot h^N N!^{\alpha},$$
which proves the claim. Since $P\varphi$ is radial, $P(P\varphi)= P\varphi$.
\end{proof}


Let $\mathcal A_{radial}$ be either $\mathcal H_{\alpha/2, radial}(\RR^d)$ or $\mathcal H_{0,\alpha/2, radial}(\RR^d)$. Recall,  $\mathcal A_{radial}$ is closed in $\mathcal A$.
Since $\mathcal H_{\alpha/2}(\RR^d)$ is $(FN)-$space, and therefore $(FS)-$space, and 
$\mathcal H_{0,\alpha/2}(\RR^d)$ is $(DFN)-$space, and therefore $(DFS)-$space, 
by \cite[Theorem A 6.5, p. 255]{morimoto} 
it follows that  
$(\mathcal A_{radial})'$, the dual space of $\mathcal A_{radial}$
is topologically isomorphic to
$\mathcal A'/{\mathcal A_{radial}^{\circ}}$, where 
$$\mathcal A_{radial}^{\circ}=\{f\in \mathcal A': \langle f,\varphi\rangle=0,\quad\mbox{for every}\quad \varphi\in\mathcal A_{radial}\}.$$  

Using this fact we are able to prove the following claim of independent interest.

\begin{prop}\label{dualradial}
The space of radial ultradistributions $\mathcal A'_{radial}$
given in Definition \ref{def:radial-distr}
is topologically isomorphic to $ (\mathcal A_{radial})'$,
the dual space of $\mathcal A_{radial}$.
\end{prop}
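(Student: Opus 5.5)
The natural approach is to build the isomorphism by composing the quotient identification $(\mathcal A_{radial})'\cong \mathcal A'/\mathcal A^{\circ}_{radial}$ (from \cite[Theorem A 6.5]{morimoto}, recalled above) with the projection $P$ of Lemma \ref{proj}. Concretely, I will define $\Psi:\mathcal A'_{radial}\to(\mathcal A_{radial})'$ by restriction, $\Psi(f)=f|_{\mathcal A_{radial}}$. In the other direction I will define $\Theta:(\mathcal A_{radial})'\to \mathcal A'_{radial}$ by $\Theta(u)=u\circ P$, that is, $\langle \Theta u,\varphi\rangle=\langle u,\varphi_S\rangle$ for $\varphi\in\mathcal A$. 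Since $P$ is continuous by Lemma \ref{proj}, $\Theta u$ is a continuous functional on $\mathcal A$.

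Next I check that $\Theta u$ is radial and that the two maps are mutually inverse. For radiality: if $A\in O(d)$, then $(\varphi(A^{-1}\cdot))_S=\varphi_S$ by invariance of the Haar measure. Hence $\langle\Theta u,\varphi(A^{-1}\cdot)\rangle=\langle \Theta u,\varphi\rangle$, so $\Theta u$ satisfies Definition \ref{def:radial-distr}. Because $P\psi=\psi$ for radial $\psi$, we get $\Psi\Theta u=u$. For the reverse composition, the key identity is $\langle f,\varphi\rangle=\langle f,\varphi_S\rangle$ for every radial $f\in\mathcal A'$ and every $\varphi\in\mathcal A$. This would give $\Theta\Psi f=f$, and in particular it shows $\Psi$ is injective.

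That identity is the main obstacle. Formally it comes from averaging $\langle f,\varphi(A^{-1}\cdot)\rangle=\langle f,\varphi\rangle$ over $O(d)$, which requires interchanging $f$ with the Bochner/Riemann integral $\int_{O(d)}\varphi(A\,\cdot)\,dA$. I will justify this by showing that $A\mapsto\varphi(A\,\cdot)$ is continuous from the compact group $O(d)$ into $\mathcal A$. In the Hermite picture, rotations commute with $H$, and they act unitarily on each finite-dimensional eigenspace $\mathrm{span}\{h_n:|n|=N\}$. So the coefficient sequences $\{c_n(\varphi(A\cdot))\}$ keep the same weighted $\ell^2$ bounds on each level $|n|=N$ (using the $\ell^2$ version of the sequence norms) and depend continuously on $A$. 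This yields continuity into a fixed step space, uniformly, and hence into $\mathcal A$. Then $\varphi_S$ is the limit in $\mathcal A$ of Riemann sums $\sum_k \lambda_k\varphi(A_k\cdot)$, and continuity of $f$ gives the identity.

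Finally, for the topology: $\Psi$ is continuous for the strong topologies, since bounded sets of $\mathcal A_{radial}$ are bounded in $\mathcal A$. $\Theta$ is the transpose of the continuous map $P:\mathcal A\to\mathcal A_{radial}$, so it is strongly continuous. Alternatively, one can observe that $\Psi$ factors through the quotient $\mathcal A'/\mathcal A^{\circ}_{radial}$, with $\mathcal A'_{radial}$ mapped bijectively onto it because $\mathcal A'=\mathcal A'_{radial}\oplus\mathcal A^{\circ}_{radial}$ via $f=\Theta\Psi f+(f-\Theta\Psi f)$. Either way, $\Psi$ is a topological isomorphism with inverse $\Theta$.
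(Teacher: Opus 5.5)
Your proposal is correct. Its algebraic core matches the paper's proof:

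\begin{itemize}
\item The paper's map $f\mapsto f+\mathcal A^{\circ}_{radial}$ is your restriction map, once $(\mathcal A_{radial})'$ is identified with $\mathcal A'/\mathcal A^{\circ}_{radial}$ via Morimoto's theorem.
\item The paper's injectivity step is your identity $\langle f,\varphi_S\rangle=\langle f,\varphi\rangle$.
\item The functional $f_1=f\circ P$ built in the paper's surjectivity step is exactly your $\Theta$.
\end{itemize}

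The two routes differ in how the topology is handled. The paper only proves that $\mathcal A'_{radial}\to\mathcal A'/\mathcal A^{\circ}_{radial}$ is a continuous bijection. It then relies on Morimoto's quotient identification, which uses the $(FS)$/$(DFS)$ structure, and on the open mapping theorem for a Pt\'ak space mapped onto a barrelled space.

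You instead note that the inverse $\Theta$ is itself strongly continuous, because it is the transpose of the continuous projection $P$ of Lemma \ref{proj}. Both directions are therefore continuous by elementary duality, and neither the quotient identification nor the open mapping theorem is needed. Your argument only uses two facts: $P$ is a continuous projection onto $\mathcal A_{radial}$, and $O(d)$ acts continuously on $\mathcal A$.

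You also supply a justification the paper leaves out. The paper interchanges $f$ with $\int_{O(d)}\cdots\,dA$ purely formally. You prove that $A\mapsto\varphi(A\,\cdot)$ is continuous into a fixed step space:
\begin{itemize}
\item rotations commute with $H$ and act unitarily on each eigenspace $\mathrm{span}\{h_n:|n|=N\}$;
\item hence the $\ell^2$-weighted norms are rotation invariant and the tails are uniformly small in $A$.
\end{itemize}
This makes the averaging identity a genuine Bochner/Riemann-sum argument. The cost is this extra verification; the gain is a self-contained proof with an explicit continuous inverse.
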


\begin{proof}
The idea of the proof is to construct a continuous bijection between $\mathcal A'_{radial}$ and $ (\mathcal A_{radial})'$,  and then apply the open mapping theorem, \cite[Corollary 1, p. 164]{Sch}.
We will use the above-mentioned fact that $ (\mathcal A_{radial})'$ is topologically isomorphic to $\mathcal A'/{\mathcal A_{radial}^{\circ}}$. 

Let $\Psi$ be the linear mapping 
$\Psi:\mathcal A'_{radial}\rightarrow \mathcal A'/{\mathcal A_{radial}^{\circ}}$ defined by 
$$
\Psi(f)=f+\mathcal A_{radial}^{\circ},  \qquad
f\in \mathcal A'_{radial}. 
$$

Assume that  $f+\mathcal A_{radial}^{\circ}=\mathcal A_{radial}^{\circ}$ for some $f\in  \mathcal  A'_{radial} $, i.e. $\langle f,\varphi\rangle=0$ for every $\varphi \in \mathcal A_{radial}$. 

Suppose now  that $ \varphi \in \mathcal A$. By using 
$$
0=\langle f,\varphi_S\rangle
= \int_{O(d)} \langle f,\varphi (Ax) \rangle dA
= \int_{O(d)} \langle f,\varphi (x) \rangle dA
=\langle f,\varphi\rangle, 
$$
it follows that $f=0$. 
Therefore, the mapping $\Psi$  is injective. 

In order to prove surjectivity, take the  arbitrary coset $f+\mathcal A_{radial}^{\circ}\in  \mathcal A'/{\mathcal A_{radial}^{\circ}}$. We define 
the functional $f_1$ that acts on $\varphi\in\mathcal A$ in the following way $\langle f_1,\varphi\rangle=\langle f, \varphi_S\rangle$. From the definition of $f_1$ and Lemma \ref{proj} it follows that $f_1\in\mathcal A'_{radial}$ and that $\langle f-f_1,\varphi\rangle=0$ for every $\varphi\in \mathcal A_{radial}$ i.e. $f+\mathcal A_{radial}^{\circ}=f_1+\mathcal A_{radial}^{\circ}$. Thus, $\Psi(f_1)=f+\mathcal A_{radial}^{\circ}$, which shows that $\Psi$ is surjective. 

Since $\Psi$ decomposes as $\mathcal A'_{radial}\rightarrow \mathcal A'\rightarrow \mathcal A'/\mathcal A_{radial}^{\circ}$ where the first mapping is canonical inclusion and the second is the canonical projection from $\mathcal A'$ onto its quotient space, it is continuous. 

Now, we can apply the open mapping theorem since
$\mathcal A'_{radial}$ is a Pt\'ak space and $\mathcal A'/\mathcal A_{radial}^{\circ}$ is barelled. It follows that $\Psi$ is a topological isomorphism.
\end{proof}

To establish the dual counterpart of Theorem \ref{Phi}, we use the transpose
$\Phi'$ of the operator $\Phi$ given in Proposition \ref{Phi}. 
Namely, for  $f\in {\mathcal H'_{\alpha/2, radial}(\RR^d)}$ ($f\in {\mathcal H'_{0,\alpha/2, radial}(\RR^d)}$) we denote
$\modu{f}=\Phi'(f)\in \mathcal H'_{\alpha/2,even}(\RR)$ ($\mathcal H'_{0,\alpha/2,even}(\RR)$)  i.e. 
\begin{equation}\label{phi'}
\langle \modu{f},\psi\rangle=
\langle \Phi'(f),\psi\rangle = 
\langle f, \Phi (\psi) \rangle =
\langle f, \modu{\psi}\rangle,
\end{equation}
$ \psi\in {\mathcal H}_{\alpha/2,even}(\RR) $ 
$(\psi\in {\mathcal H}_{0,\alpha/2,even}(\RR))$.
From  Propositions \ref{Phi} and  \ref{dualradial} 
we derive the following conclusion.

\begin{prop}\label{prop:phi'}
The mapping $\Phi'$  given by (\ref{phi'}) defines topological isomorphisms:
$$\Phi':{\mathcal H'}_{\alpha/2,radial}(\RR^d)\rightarrow {\mathcal H'}_{\alpha/2,radial}(\RR)$$ and 
$$\Phi':{\mathcal H'}_{0,\alpha/2,radial}(\RR^d)\rightarrow {\mathcal H'}_{0,\alpha/2,radial}(\RR).
$$
\end{prop}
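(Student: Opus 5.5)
The plan is to obtain the statement by transposition. First I extend Proposition \ref{Phi} from $d=2$ to arbitrary $d$. Then I combine the transpose of that isomorphism with Proposition \ref{dualradial}, once in $\RR^d$ and once in $\RR$. I describe the Roumieu case; the Beurling case is the same, with the existential quantifier on $h$ replaced by a universal one.

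\emph{Step 1: $\Phi:\mathcal H_{\alpha/2,even}(\RR)\to\mathcal H_{\alpha/2,radial}(\RR^d)$ is a topological isomorphism for every $d$.}

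For the forward direction, let $g\in\mathcal H_{\alpha/2,even}(\RR)$. By \cite[Theorem 4.5]{JPTV}, $f=\mathcal Jg\in\mathcal G_\alpha(\RR_+)$, so $|a_n(f)|\le Ce^{-hn^{1/\alpha}}$. Since $(\Phi g)(x)=f(|x|^2)$, we can insert the $d$-dimensional identity \eqref{lagford} for $l_n(x_1^2+\cdots+x_d^2)$. Its coefficients satisfy $|c_{n,k}|\le C n^{2+\cdots+(d-1)}$, which is only polynomial growth. The double series therefore converges absolutely and can be rearranged into a Hermite expansion $\sum_m b_m h_m$. Here $b_m=0$ unless every $m_j$ is even, and otherwise
\[
|b_m|\le C'\,|m|^{N}e^{-h(|m|/2)^{1/\alpha}}\le C''e^{-h_1|m|^{1/\alpha}},
\]
with $h_1<h\,2^{-1/\alpha}$. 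The constants depend only on $C$ and $h$, so bounded sets of $\mathcal G_{\alpha,h}$ go to bounded sets of a fixed Banach step. Hence the map is continuous, since $\mathcal G_\alpha(\RR_+)$ is a (DFS)-space and therefore bornological.

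For the converse, let $\modu g\in\mathcal H_{\alpha/2,radial}(\RR^d)$ and define $g(x)=\modu g(x,0,\dots,0)$. I proceed as in Lemma \ref{radial2}. Evenness forces the Hermite coefficients $b_m$ to vanish unless all $m_j$ are even. The tensor estimate $|b_m|\le Ce^{-h_1\sum_j m_j^{1/\alpha}}$ (the analogue of \eqref{amn}) holds, and $|h_{2k}(0)|\le1$ by \eqref{hermit0}. Summing over $(m_2,\dots,m_d)$ then gives Hermite coefficients of $g$ bounded by $C_1e^{-h_1m_1^{1/\alpha}}$, again with uniform constants. Finally, $\Phi$ composed with this restriction map is the identity on radial functions, because radial functions are determined by their values on one axis.

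\emph{Step 2: dualize.} The map $\Phi$ is a topological isomorphism between closed subspaces of (DFN)-spaces, and these subspaces are reflexive. Its transpose $\Phi':(\mathcal H_{\alpha/2,radial}(\RR^d))'\to(\mathcal H_{\alpha/2,even}(\RR))'$ is therefore a topological isomorphism of the strong duals, and it acts as in \eqref{phi'}. Proposition \ref{dualradial} in dimension $d$ identifies $(\mathcal H_{\alpha/2,radial}(\RR^d))'$ with $\mathcal H'_{\alpha/2,radial}(\RR^d)$. Applying the same proposition with $d=1$, where $O(1)=\{\pm1\}$ and radial means even, identifies $(\mathcal H_{\alpha/2,even}(\RR))'$ with $\mathcal H'_{\alpha/2,radial}(\RR)$. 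Composing these three isomorphisms gives the claim.

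\emph{Main obstacle.} I expect the hard part to be Step 1 for general $d$, in particular the uniform estimates in the restriction direction. The difficulty is the analogue of \eqref{amn}, a product-type decay of the Hermite coefficients in each index separately. I would take it from \cite[(3.4)]{SSSB} and \cite[(15)]{JPTV} in $d$ variables, and check that the sum over the remaining $d-1$ indices converges uniformly.
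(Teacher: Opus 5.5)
Your proposal is correct and follows the paper's argument. The paper derives the statement directly from Proposition~\ref{Phi}, by transposing $\Phi$, and from Proposition~\ref{dualradial}, applied in dimension $d$ and in dimension $1$ where radial means even, which is exactly your Step~2. Your Step~1 supplies the extension of Proposition~\ref{Phi} from $d=2$ to general $d$, which the paper uses tacitly. Two small points there: with $|k|\le n$ in \eqref{lagford}, each $b_m$ is a tail sum over $n\ge |m|/2$ rather than a single term, which still gives $|b_m|\le C''e^{-h_1|m|^{1/\alpha}}$. Also, the product-type decay you flag as the main obstacle is elementary, since $(\sum_j m_j)^{1/\alpha}\ge \min(1,d^{1/\alpha-1})\sum_j m_j^{1/\alpha}$.
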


\section*{Acknowledgement}

This work was supported by the
Science Fund of the Republic of Serbia,
{\#}GRANT No. 2727,
Global and local analysis of operators and
distributions - GOALS. 
S. Jak\v{s}i\'{c} also  acknowledges the financial support
of the Ministry of Science, Technological Development and Innovation of
the Republic of Serbia (Grant No. 451-03-34/2026-03/ 200169).
N. Teofanov also
gratefully acknowledges
the financial support
of the Ministry of Science, Technological 
Development and Innovation
of the Republic of Serbia
(Grants No. 451-03-33/2026-03/ 200125 \&
451-03-34/2026-03/ 200125), and partial support from "Quantum Austria" 
grant of the Austrian Research Foundation FWF (Grant number PAT 2056623) 
led by Maurice de Gosson (Vienna).

\end{document}